\documentclass[a4paper,11pt]{amsart}
\usepackage{amsmath,amsthm,amssymb,amsfonts,enumerate,color,enumerate}
\usepackage{hyperref}
\oddsidemargin = 9pt \evensidemargin = 9pt \textwidth = 440pt

\usepackage{verbatim}
\usepackage[english]{babel} 
\usepackage{color}
\usepackage{tikz}
\usepackage{graphicx}
\usepackage[thicklines]{cancel}

\newcommand{\red}[1]{\textcolor{black}{#1}}
\newtheorem{theorem}{Theorem}

\newtheorem{lemma}[theorem]{Lemma}

\newtheorem{proposition}[theorem]{Proposition}
\newtheorem{definition}[theorem]{Definition}

\newtheorem{remark}[theorem]{Remark}
\newtheorem*{theorem*}{Theorem}
\newcommand{\comm}[1]{}

\allowdisplaybreaks

\newcommand{\R}{\mathbb{{R}}}

\newcommand{\N}{\mathbb{{N}}}

\newcommand{\C}{\mathbb{{C}}}
\newcommand{\LL}{\mathcal{{L}}}

\newcommand{\MM}{\mathcal{{M}}}
\newcommand{\Sg}{\mathcal{{S}}}
\DeclareMathOperator{\ind}{ind}
\DeclareMathOperator{\mre}{Re} 
\DeclareMathOperator{\mim}{Im}
\DeclareMathOperator{\dist}{dist}
\DeclareMathOperator{\arc}{arc}
\DeclareMathOperator{\face}{face}
\DeclareMathOperator{\loc}{loc}
\DeclareMathOperator{\supp}{supp}
\DeclareMathOperator{\ess}{ess}
\DeclareMathOperator{\Tr}{Tr}

\title{The quasi-static plasmonic problem for polyhedra}
\author{Marta de Le\'on-Contreras}
\address{ Department of  Mathematical Sciences, Norwegian University of Science and Technology (NTNU),  \newline NO-7491 Trondheim, Norway
}
\email{marta.deleoncontreras@ntnu.no}
\author{Karl-Mikael Perfekt}
\address{ Department of  Mathematical Sciences, Norwegian University of Science and Technology (NTNU),  \newline NO-7491 Trondheim, Norway
}
\email{karl-mikael.perfekt@ntnu.no}
\begin{document}
		\begin{abstract}
		We characterize the essential spectrum of the plasmonic problem for polyhedra in $\R^3$. The description is particularly simple for convex polyhedra and permittivities $\epsilon < - 1$. The plasmonic problem is interpreted as a spectral problem through a boundary integral operator, the direct value of the double layer potential, also known as the Neumann--Poincar\'e operator. We therefore study the spectral structure of the double layer potential for polyhedral cones and polyhedra.
	\end{abstract}
	\maketitle
	
	\section{Introduction}
	\subsection{Background}
	Let $\Omega\subset \R^3$ be an open simply connected bounded polyhedron (\red{with flat faces and straight edges}), understood as an inclusion into infinite space with relative permittivity $\epsilon \in \C$, $\mre \, \epsilon < 0$. For a given function (or distribution) $g$ on $\partial \Omega$, the quasi-static plasmonic problem seeks a potential $U \colon \R^3 \to \C$, 
	$$U(x) = o(1), \qquad  |x| \to \infty,$$
	which is harmonic in $\Omega$ and its exterior,
	$$\Delta U (x) = 0, \qquad x \in \R^3 \setminus \partial \Omega,$$
	and satisfies
	$$\Tr_+ U = \Tr_- U, \qquad \epsilon \left( \frac{\partial}{\partial n} U\right)_+ - \left( \frac{\partial}{\partial n} U\right)_- = g$$
	on $\partial \Omega$. Here $\Tr_\pm U$ and $\left( \frac{\partial}{\partial n} U\right)_\pm$ denote the interior/exterior traces and limiting outward normal derivatives of $U$ on $\partial \Omega$. A value of $\epsilon$ for which there is a non-zero solution $U$ of the plasmonic problem with $g = 0$ is a plasmonic eigenvalue; the corresponding eigenfield $\nabla U$ is a static plasmon associated with the permittivity $\epsilon$. 
	
	If $\Omega$ is Lipschitz and $U$ is assumed to be of finite energy, then any plasmonic eigenvalue $\epsilon$ must satisfy that $\epsilon < 0$, since Green's formula implies that $ \int_{\R^3 \setminus \overline{\Omega}} |\nabla U|^2 \, dx = -\epsilon \int_{\Omega} |\nabla U|^2 \, dx$. Plasmonic problems, where $\mre \, \epsilon < 0$, appear as quasi-static approximations of electrodynamical problems where the scatterer is much smaller than the wavelength of the scattered electromagnetic wave, see \cite{ARYZ16} and \cite[Section~8]{HKR20}. If instead $\epsilon > 0$, or if $\mim \, \epsilon \neq 0$ and $\mre \, \varepsilon > 0$, then the described problem is an ordinary electrostatic or quasi-static problem, which is thoroughly studied and mostly very well understood.
	
	We will use layer potential operators to interpret and analyze the plasmonic problem of $\Omega$ as a spectral problem. Given a charge $u$ on $\partial \Omega$, the corresponding single layer potential of $-\Delta$ is given by 
	\begin{equation*}
	    \mathcal{S}u(x)=\frac{1}{4\pi}\int_{\partial\Omega}\mathcal{S}(x,y)u(y) \, dS(y)=\frac{1}{4\pi}\int_{\partial\Omega}\frac{u(y)}{|x-y|} \, dS(y), \qquad {x\in\R^3},  
	\end{equation*}
		where $dS$ denotes the standard surface measure on $\partial\Omega$.
 The Neumann--Poincar\'e operator on $\partial\Omega$, the direct value of the double layer potential of $u$,   is defined by
	\begin{align*}
	Ku(x)=-\frac{1}{4\pi}\int_{\partial\Omega} u(y)\frac{\partial}{\partial n_y}\frac{1}{|x-y|} \, dS(y)=\frac{1}{4\pi}\int_{\partial\Omega} u(y)\frac{(y-x)\cdot n_y}{|x-y|^3} \, dS(y), \qquad x\in\partial\Omega,
	\end{align*}
	 where $n_y$ is the outward normal vector at (almost every) $y \in \partial\Omega$. Inserting the ansatz
	 $$U = \mathcal{S}u$$
	 into the plasmonic problem yields the equation 
	 \begin{equation} \label{eq:plasmoniclaypot}
	 (K^\ast - \lambda) u = \frac{g}{1 - \epsilon}, \qquad \lambda = \frac{\epsilon + 1}{2(\epsilon - 1)},
	 \end{equation}
	 where the adjoint $K^\ast$ has been formed with respect to the $L^2(\partial \Omega)$-pairing.
	 	 Note that $\mre \, \epsilon < 0$ if and only if $|\lambda| < 1/2$.
	 For justification of the connection between the plasmonic problem and the spectral theory of $K$ in $L^p$, Sobolev, and Hardy space settings, see for example \cite{AFKRYZ18, EM04, HKL17, HP13}. For a treatment that includes classes of non-Lipschitz domains, see \cite{HMT10}.
	 
	 For smooth domains, the spectrum of the plasmonic problem consists solely of a sequence of eigenvalues, which for 3D domains is governed by a Weyl law \cite{AKMP20, MR19}. For domains with singularities, the plasmonic problem also exhibits essential spectrum (here interpreted via the connection with the Neumann--Poincar\'e operator). \red{To exemplify this, we recall that for a curvilinear polygon $\Omega$ in 2D, with interior angles $\beta_1, \ldots, \beta_J$, the  spectral picture of the analogous 2D Neumann--Poincar\'e operator is very well understood \cite{BHM20, BZ19, Mitreapolygon, Per21, PerfektPutinar}. As is typical for domains with singularities, the situation is highly dependent on the choice of function space. On the Sobolev space $H^{1/2}(\partial \Omega)$, the most physically meaningful choice, there is a self-adjoint realization of $K \colon H^{1/2}(\partial \Omega) \to H^{1/2}(\partial \Omega)$, and the essential spectrum is absolutely continuous and given by
     \begin{equation} \label{eq:NP2Dcurvispec}
     \sigma_{\ess}\left( K , H^{1/2}(\partial \Omega)\right)= \left\{x\in\R \, : \, |x|\le \max_{1\le j\le J   } \frac{|1-\beta_j/\pi|}{2} \right\}.
     \end{equation}
     On the other hand, the essential spectrum (in the sense of Fredholm operators) of $K \colon L^2(\partial \Omega) \to L^2(\partial \Omega)$ is a union of complex curves,
     $$\sigma_{\ess}\left( K , L^2(\partial \Omega)\right) = \bigcup_{1 \leq j \leq J} (\Sigma_{0, \beta_j} \cup \Sigma_{0, \beta_j}^-),$$
     where
  $$ \Sigma_{0, \beta_j} = \left \{\frac{1}{2}\frac{\sin((\pi-\beta_j)(\frac{1}{2}+i\xi))}{\sin(\pi(\frac{1}{2}+i\xi))} \, : \, -\infty \leq \xi \leq \infty \right\}$$
  and $\Sigma_{0, \beta_j}^- = - \Sigma_{0, \beta_j}$. Furthermore, outside the essential spectrum, the index of $K-\lambda$ is given by the winding number of $\lambda$ with respect to $\sigma_{\ess}\left( K , L^2(\partial \Omega)\right)$. The $L^2(\partial \Omega)$-theory can be understood through the lens of Mellin pseudodifferential operators, which we will briefly explain in Section~\ref{subsec:mellinpdo}.} Other types of singularities in 2D have also been considered \cite{BDTZ19}. Much less is known for 3D domains, \red{but analogous results for smooth conical singularities have been considered in \cite{BCR20} and \cite{HP18}, and for edges in \cite{Per19}.} The plasmonic problem has also been investigated numerically for some regular polyhedra \cite{HP13, Sih04}.
	 
	 For polyhedra $\Omega \subset \R^3$, the study of the (essential) spectral radius of $K \colon X \to X$ and the invertibility of $K + 1/2 \colon X \to X$ is a topic of very rich history; a vast variety of function spaces $X$ on $\partial \Omega$ have been considered. The invertibility of $K+1/2$ reflects the possibility of solving the Dirichlet problem in $\Omega$ with boundary data from $X$. We refer to \cite{Wend09} for an extensive survey, choosing here to only summarize the state of the art as it relates to the plasmonic problem.
	 
	 Rathsfeld \cite{Rat92} proved that $K + 1/2$ (appropriately modified at the edges) is invertible on the space $C(\partial \Omega)$ of continuous functions on $\partial \Omega$, for arbitrary polyhedra $\Omega$. To prove his result, Rathsfeld estimated the spectral radius on polyhedral cones using Mellin techniques -- note well the important correction that was made to this analysis in \cite{Rath95}. Elschner \cite{Elschner} refined the analysis further and proved that the essential spectral radius of $K$ is less than $1/2$ for a range of weighted Sobolev spaces on Lipschitz polyhedra $\partial \Omega$; \red{in Lemma~\ref{PropertiesH} we will recall some important details of Elschner's study.} Grachev and Maz'ya independently obtained the same result as Rathsfeld, and additionally established the invertibility of $K+1/2$ on weighted $L^p$-spaces for general polyhedra, see \cite{GM13, Maz12}. The Mellin techniques of \cite{Elschner, Rat92} were adapted to the study of other layer potential operators in \cite{Mitrea99}. However, it appears that the reasoning in the proof of \cite[Theorem~5]{Mitrea99} suffers from the same type of flaw as that of \cite[Lemma~1.5]{Rat92}, cf. Theorem~\ref{specK} and Remark~\ref{rmk:resolvent}. In \cite{Mitrea99} it was also proven that the essential spectral radius of $K \colon L^2(\partial \Omega) \to L^2(\partial \Omega)$ is less than $1/2$ if $\Omega$ has sufficiently small Lipschitz character; the {\it spectral radius conjecture} asks if this is true for all Lipschitz domains. The spectral radius conjecture is known to be true on the Sobolev space $H^{1/2}(\partial \Omega)$ \cite{Cost07, SW01}.
	
\subsection{Results} The main purpose of this article is to describe the essential spectrum  of $K \colon H^{1/2}(\partial \Omega) \to H^{1/2}(\partial \Omega)$, or, equivalently, of $K^\ast \colon H^{-1/2}(\partial \Omega) \to H^{-1/2}(\partial \Omega)$, for Lipschitz polyhedra $\Omega$. Note that in the layer potential formulation \eqref{eq:plasmoniclaypot} of the plasmonic problem, charges $u \in H^{-1/2}(\partial \Omega)$ correspond to potentials $U = \mathcal{S}u$ with finite energy, $\int_{\R^3} |\nabla U|^2 \, dx < \infty$. We will also investigate the spectra of $K \colon L^2_\alpha(\partial \Omega) \to L^2_\alpha(\partial \Omega)$ for certain weighted $L^2$-spaces, $L^2_\alpha(\partial \Omega)$. Our results in this latter setting will serve as crucial tools for our study in $H^{1/2}(\partial \Omega)$, but we also believe that they are of some independent interest. All of our results in the $L^2_\alpha(\partial \Omega)$-context are valid for arbitrary polyhedra, including non-Lipschitz polyhedra such as the interior of
$$([0,1] \times [0,1] \times [0,2]) \cup ([1,2] \times [0,2] \times [0,1]),$$
\red{which is a variant of the so-called ``two brick'' domain}.
To understand general bounded polyhedra, we first analyze the Neumann--Poincar\'e operator $K$ for polyhedral cones $\Gamma$ which locally coincide with $\partial \Omega$ around vertices. Assuming that $\Gamma$ has its vertex at the origin, we consider as in Figure~\ref{fig:polyhedralconep3} the spherical polygon
$$\gamma = \Gamma \cap S^2,$$
where $S^2$ denotes the two-dimensional unit sphere. In this case, $K$ is a Mellin operator with an operator-valued convolution kernel \cite{QN}, and this leads us to consider the (direct value of the) double layer potential operator $H(i\xi)$ on $\gamma$, $\xi \in \R$, formed with respect to the fundamental solution for $-\Delta_{S^2} + 1/4 + \xi^2$, where $\Delta_{S^2}$ is the Laplace--Beltrami operator of $S^2$ -- see Section~\ref{sec:mellinoncones}. 
\begin{figure}[ht]
	\centering
	\includegraphics[width=0.4\linewidth]{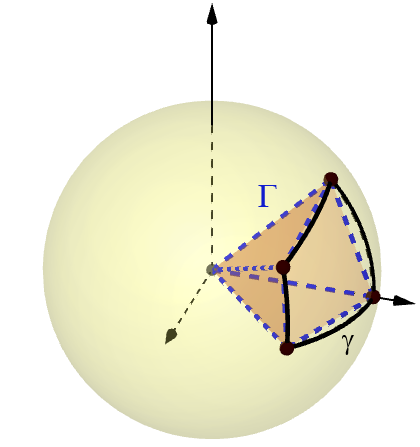}
	\caption{An illustration of a polyhedral cone $\Gamma$ and the corresponding spherical polygon $\gamma$.}
	\label{fig:polyhedralconep3}
\end{figure}

In Theorem~\ref{SpecHH1/2g} and Lemma~\ref{spectH}, we will describe the spectra of $H(i\xi) \colon H^{1/2}(\gamma) \to H^{1/2}(\gamma)$ and $H(i\xi) \colon L^2_\alpha(\gamma) \to L^2_\alpha(\gamma)$, $0 \leq \alpha < 1$, where
$$L^2_\alpha(\gamma) = L^2(\gamma, q^{-\alpha} \, d\omega),$$ $d\omega$ denoting the arc length measure on $\gamma$ and $q(\omega)$ a quantity comparable to the distance from $\omega \in \gamma$ to the corners of $\gamma$. In particular, the essential spectrum in the former case is given by
$$\sigma_{\ess}\left(H(i\xi), H^{1/2}(\gamma)\right)= \left\{x\in\R \, : \, |x|\le \max_{1\le j\le J   } \frac{|1-\beta_j/\pi|}{2} \right\},$$
where $\beta_1, \ldots, \beta_J$ denotes the internal angles of $\gamma$, \red{cf. \eqref{eq:NP2Dcurvispec}}. The remaining part of the spectrum $\sigma\left(H(i\xi), H^{1/2}(\gamma)\right)$ consists of real eigenvalues. Let 
\begin{equation*} 
\Lambda^\ast = \{\lambda \, : \, \lambda \textnormal{ is an isolated eigenvalue of } H(z) \colon H^{1/2}(\gamma) \to H^{1/2}(\gamma), \textnormal{ for some} \mre z = 0\}.
\end{equation*}
Similarly, for $0 \leq \alpha < 1$ we introduce
\begin{equation*} 
\Lambda^\alpha = \{\lambda \, : \, \lambda \textnormal{ is an isolated eigenvalue of } H(z) \colon L^2_\alpha(\gamma) \to L^2_\alpha(\gamma), \textnormal{ for some} \mre z = 0\}.
\end{equation*}
It turns out that both of these sets are real, and that they can be equivalently formed by considering isolated eigenvalues of the $L^2(\gamma)$-adjoint operators $H^\ast(z) \colon H^{-1/2}(\gamma) \to H^{-1/2}(\gamma)$ and $H^\ast(z) \colon L^2_{-\alpha}(\gamma) \to L^2_{-\alpha}(\gamma)$, respectively. It is an important observation that eigenfunctions to eigenvalues $\lambda$ of $H^\ast(i\xi)$ correspond to potentials $U$ on $S^2$ such that
$$(-\Delta_{S^2} + 1/4 + \xi^2) U (\omega) = 0, \qquad \omega \in S^2 \setminus \gamma,$$
and
$$\Tr_+ U = \Tr_- U, \qquad \epsilon \left( \frac{\partial}{\partial n} U\right)_+ = \left( \frac{\partial}{\partial n} U\right)_-$$
on $\gamma$, where $\lambda$ and $\epsilon$ are related as in \eqref{eq:plasmoniclaypot}, see Lemma~\ref{eigenvH*}.

Before we can discuss our main results, we need to introduce some additional notation. For Lipschitz polyhedral cones $\Gamma$, we introduce,  following \cite[Section~4]{Per19}, $\mathcal{E}(\Gamma)$ as a space of distributions on $\Gamma$ with norm given by 
$$\|f\|^2_{\mathcal{E}(\Gamma)} = \langle \mathcal{S}f, f \rangle_{L^2(\Gamma)}.$$
By results of \cite{Per19}, $\mathcal{E}(\Gamma)$ is isomorphic to the $L^2(\Gamma)$-dual of the homogeneous Sobolev space $\dot{H}^{1/2}(\Gamma)$. We will see that $K^\ast \colon \mathcal{E}(\Gamma) \to \mathcal{E}(\Gamma)$ is self-adjoint, and that $\mathcal{E}(\Gamma)$ is the correct space to consider for localization to $H^{-1/2}(\partial \Omega)$. We let $\hat{\Gamma}$ denote the interior of $\Gamma$ (which coincides locally with $\Omega$ around vertices), and we understand that $[c, d) = (c, d] = \emptyset$ if $c > d$. Let $j = j^\ast$ be the index which maximizes $|1 - \beta_j/\pi|$.
\begin{theorem*}
	Let $K$ be the Neumann--Poincar\'e operator of a Lipschitz polyhedral cone $\Gamma$. Then 
	\begin{equation*}\sigma(K^*, \mathcal{E}(\Gamma)) = \sigma_{\ess}(K^*, \mathcal{E}(\Gamma)) = \left[-\frac{|1 - \beta_{j^\ast}/\pi|}{2},\frac{|1 - \beta_{j^\ast}/\pi|}{2} \right]\cup \Lambda^\ast.
	\end{equation*}	
	Furthermore, there are two numbers $0 \leq \mu_\pm < 1/2$ such that
	\begin{equation*} 
	\Lambda^\ast = \left[-\mu_-, -\frac{|1 - \beta_{j^\ast}/\pi|}{2}\right) \cup \left(\frac{|1 - \beta_{j^\ast}/\pi|}{2}, \mu_+\right].
	\end{equation*}
	If $\hat{\Gamma}$ is convex, then $\mu_- \leq \mu_+$ and
	$$\mu_+ = \max \sigma(H(0), H^{1/2}(\gamma)) = \max \sigma(H^\ast(0), H^{-1/2}(\gamma)).$$
\end{theorem*}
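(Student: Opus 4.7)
The plan is to reduce everything to the operator-valued symbol family $H^\ast(i\xi)$, whose spectral structure has already been characterized in Theorem~\ref{SpecHH1/2g} and Lemma~\ref{spectH}, and then carefully track how the spectra behave as $\xi$ varies along the real line.

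\textbf{Step 1 (Mellin decomposition).} First, I would invoke the Mellin representation of the Neumann--Poincar\'e operator on the cone $\Gamma$ developed in Section~\ref{sec:mellinoncones}. Writing points of $\Gamma$ in polar form $(r,\omega) \in (0,\infty) \times \gamma$ and taking the radial Mellin transform identifies $\mathcal{E}(\Gamma)$ with a direct integral $\int_\R^\oplus \HH_\xi \, d\xi$, where each fiber $\HH_\xi$ is a copy of the Hilbert space on $\gamma$ dual (via the $L^2(\gamma)$-pairing) to $H^{1/2}(\gamma)$; the choice of the line $\mre z = 0$ is dictated exactly by the energy norm defining $\mathcal{E}(\Gamma)$. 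Under this identification, $K^\ast$ becomes the decomposable operator $\xi \mapsto H^\ast(i\xi)$.

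\textbf{Step 2 (Spectrum as a union).} By the standard spectral theorem for decomposable self-adjoint operators, $\sigma(K^\ast, \mathcal{E}(\Gamma))$ equals the essential closure of $\bigcup_{\xi\in\R}\sigma(H^\ast(i\xi), \HH_\xi)$, and since $\xi$ is a continuous parameter, every point of this union lies in $\sigma_{\ess}$, yielding $\sigma = \sigma_{\ess}$. Applying the $\xi$-independent description of $\sigma_{\ess}(H^\ast(i\xi))$ from Theorem~\ref{SpecHH1/2g} and the definition of $\Lambda^\ast$, the union splits as the fixed interval $[-|1-\beta_{j^\ast}/\pi|/2,\,|1-\beta_{j^\ast}/\pi|/2]$ plus $\Lambda^\ast$, establishing the first displayed equation.

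\textbf{Step 3 (Structure of $\Lambda^\ast$).} Here one needs to show that the discrete eigenvalues of $H^\ast(i\xi)$ sweep out an interval on either side of the essential spectrum. My approach would be: (i) use analytic Fredholm/Kato perturbation theory to see that each isolated eigenvalue of $H^\ast(i\xi)$ depends continuously (in fact real-analytically) on $\xi$ on the line $\mre z = 0$, and that such branches are real since $H^\ast(i\xi)$ is self-adjoint with respect to an appropriate inner product; (ii) show that as $|\xi| \to \infty$, the fundamental solution of $-\Delta_{S^2}+\tfrac14+\xi^2$ decays exponentially, so that $H^\ast(i\xi)$ becomes a sum of pieces localized at each corner of $\gamma$, and its eigenvalues must be absorbed into the essential spectrum at the values $\pm|1-\beta_{j^\ast}/\pi|/2$; (iii) by connectedness and continuity of the branches, their union is a finite collection of closed intervals abutting these two endpoints, which combines into the claimed union $[-\mu_-,-|1-\beta_{j^\ast}/\pi|/2)\cup(|1-\beta_{j^\ast}/\pi|/2,\mu_+]$. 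The bound $\mu_\pm < 1/2$ follows from Elschner's essential spectral radius estimate recalled in Lemma~\ref{PropertiesH}.

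\textbf{Step 4 (Convex case).} Assuming $\hat\Gamma$ convex, I would use the interpretation of Lemma~\ref{eigenvH*}: eigenfunctions of $H^\ast(i\xi)$ at an eigenvalue $\lambda$ correspond to transmission eigenfunctions on $S^2$ for $-\Delta_{S^2}+\tfrac14+\xi^2$ with the interior-domain $\hat\gamma$. A variational (Rayleigh quotient) argument -- replacing $\tfrac14+\xi^2$ by its minimum at $\xi=0$ and exploiting convexity to control the sign of the boundary integrals -- yields that the largest eigenvalue is monotonically non-increasing in $|\xi|$, giving $\mu_+ = \max\sigma(H(0), H^{1/2}(\gamma))$, and symmetrically places the positive branch above $|\mu_-|$. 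The main obstacle, and the technical heart of the argument, is Step~3(ii): quantifying precisely how the eigenvalue branches merge into the essential spectrum as $|\xi|\to\infty$, and Step~4's monotonicity, which requires a delicate comparison between $\xi = 0$ and $\xi \neq 0$ that only goes through when $\hat\Gamma$ is convex.
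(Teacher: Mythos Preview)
Your Steps 1--3 track the paper's argument closely: Mellin diagonalization (Lemma~\ref{lem:Kastuneq}), identification of the fiberwise spectra via Theorem~\ref{SpecHH1/2g}, and the continuity/absorption argument for the eigenvalue branches (carried out in the paper in the $L^2_\alpha$ setting, Theorem~\ref{contessspecK}, and then transferred via $\Lambda^\ast = \bigcup_\alpha \Lambda^\alpha$). Two small corrections: the bound $\mu_\pm < 1/2$ comes from the Kellogg argument (Lemma~\ref{eigenvH*}), not from Elschner's norm estimate; and in Step~2 you should be careful that the fiber Hilbert spaces $H^{-1/2}_\xi(\gamma)$ carry $\xi$-dependent norms, so the ``standard direct integral'' theorem does not apply verbatim---the paper instead uses uniform resolvent bounds from self-adjointness plus explicit Weyl sequences with shrinking support (as in Lemma~\ref{lem:Weyl} and Theorem~\ref{specK}).

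Step~4 is where your plan diverges from the paper and contains a genuine gap. You propose a variational Rayleigh-quotient argument to show that the top eigenvalue of $H^\ast(i\xi)$ is monotone in $|\xi|$, ``exploiting convexity to control the sign of the boundary integrals.'' It is not at all clear how to make this work: the transmission eigenvalue is a ratio of interior and exterior energies (cf.\ \eqref{eq:kelloggen}), both of which increase with the potential $1/4+\xi^2$, and convexity of $\hat\Gamma$ does not enter naturally into such a comparison. The paper's actual mechanism is quite different and more direct: convexity of $\hat\Gamma$ makes the integral kernel of $K^\ast$ pointwise nonnegative, hence
\[
|H^\ast(i\xi)(\omega,\omega')| = \left|\int_0^\infty t^{i\xi+3/2} K^\ast(t,\omega,\omega')\,\frac{dt}{t}\right| \le H^\ast(0)(\omega,\omega').
\]
Applied to an eigenfunction $g$ this gives $|\lambda|\,|g| \le H^\ast(0)|g|$ pointwise; pairing against $|g|$ in the energy inner product (whose single-layer kernel $\tilde S(0)$ is also positive) and invoking the min--max principle yields $|\lambda| \le \max\sigma(H^\ast(0))$. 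This kernel-positivity comparison is the key idea in the convex case, and your outline does not capture it.
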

\red{Loosely speaking, it is the edges of $\Gamma$ which give rise to the 
subinterval 
$$\left[-\frac{|1 - \beta_{j^\ast}/\pi|}{2},\frac{|1 - \beta_{j^\ast}/\pi|}{2} \right] \subset \sigma_{\ess}(K^*, \mathcal{E}(\Gamma)),$$
cf. \cite{Per19}. When $\hat{\Gamma}$ is not convex, it can happen that $\mu_- > |1 - \beta_{j^\ast}/\pi|/2$, as can be seen by employing the idea of the proof of Theorem~\ref{thm:coneconvex}. In the convex case, the role of $\mu_-$ is rather mysterious, and we do not know whether it might actually be the case that $\mu_- = 0$. }

We will prove a similar result for $L^2_\alpha(\Gamma) = L^2(\R_+, dr) \otimes L^2_\alpha(\gamma)$, $0 \leq \alpha < 1$. In fact, our study of $K \colon L^2_\alpha(\Gamma) \to L^2_\alpha(\Gamma)$ will inform our study of $K^\ast \colon \mathcal{E}(\Gamma) \to \mathcal{E}(\Gamma)$. In particular, we will show that
$$\Lambda^\ast = \bigcup_{0 \leq \alpha < 1} \Lambda^\alpha,$$
and an associated regularity result: if $g \in H^{-1/2}(\gamma)$ is an eigenvector to an eigenvalue $\lambda$ of $H^\ast(i\xi)$ with $|\lambda| > |1 - \beta_{j^\ast}/\pi|/2$, $H^\ast(i\xi) g = \lambda g$, then $g \in L^2_{-\alpha}(\gamma)$ for sufficiently large $\alpha < 1$. However, in this weighted $L^2$-setting, the set $\sigma(K, L^2_\alpha(\Gamma)) \setminus \Lambda^\alpha$ also contains complex points, which we are only able to partially describe. We therefore defer precise statements to Theorems~\ref{specK} and \ref{contessspecK}. 

One approach to the localization to $\partial \Omega$ is via the machinery of $b$-calculus \cite{Grie01, Mel81, Schulze91}, which relies on the construction of appropriate algebras of pseudo-differential operators. \red{It may be possible to treat curvilinear polyhedra using such techniques.} However, we shall take an alternative, rather direct approach to localization, \red{staying within our scope of polyhedral domains (with flat faces).} In one direction, we will construct Weyl sequences on $\partial \Omega$ in a procedure that seems applicable to a wider range of problems. We will prove complete localization results for both $L^2_\alpha(\partial \Omega)$, $0 \leq \alpha < 1$, and $H^{1/2}(\partial \Omega)$, where $L^2_\alpha(\partial \Omega)$ is defined in analogy with $L^2_\alpha(\Gamma)$. We have chosen to state the result only in the case of $H^{1/2}(\partial \Omega)$ here, deferring the remaining statement to Theorem~\ref{locarg1}.

\begin{theorem*}
	Let $K$ be the Neumann--Poincar\'e operator of a Lipschitz polyhedron $\partial \Omega$. For each vertex of $\partial \Omega$, let $K_i$ denote the Neumann--Poincar\'e operator of the corresponding tangent polyhedral cone $\Gamma_i$, $i = 1, \ldots, I$. Then, for $\lambda \in \mathbb{C}$, $K - \lambda$ is Fredholm on $H^{1/2}(\partial \Omega)$ if and only if $K_i^\ast - \lambda$ is invertible on $\mathcal{E}(\Gamma_i)$ for every $i=1,\dots,I$. That is,
	$$\sigma_{\ess}(K, H^{1/2}(\partial \Omega)) = \bigcup_{1 \leq i \leq I} \sigma(K_i^\ast, \mathcal{E}(\Gamma_i)).$$
\end{theorem*}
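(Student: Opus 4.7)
The plan is to prove both directions via localization with a partition of unity and compactness of commutators. First, I would fix a partition of unity $\sum_{i=0}^I \chi_i^2 = 1$ on $\partial\Omega$, where $\chi_i \in C_c^\infty(\R^3)$ for $i \geq 1$ is supported in a small ball $U_i$ around the vertex $v_i$ on which $\partial\Omega$ coincides with the tangent cone $\Gamma_i$, and $\chi_0$ is supported away from all vertices. The first step is to verify that $[K, \chi_i]$ is compact on $H^{1/2}(\partial\Omega)$; its integral kernel carries the factor $\chi_i(x)-\chi_i(y) = O(|x-y|)$, which cancels one power of the singularity in the Neumann--Poincar\'e kernel and yields an operator that maps $H^{1/2}$ into a strictly better Sobolev space, hence is compact by Rellich. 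Combining this with the identity $\chi_i K \chi_i = \chi_i K_i \chi_i$ for $i \geq 1$, which holds exactly because the integral kernel only sees points in $\supp \chi_i \cap \partial\Omega = \supp \chi_i \cap \Gamma_i$, gives a decomposition
\begin{equation*}
K - \lambda = \sum_{i=1}^I \chi_i (K_i - \lambda) \chi_i + \chi_0 (K - \lambda) \chi_0 + C_\lambda,
\end{equation*}
where $C_\lambda$ is compact on $H^{1/2}(\partial\Omega)$.

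For the forward direction, I assume $K_i^\ast - \lambda$ is invertible on $\mathcal{E}(\Gamma_i)$ for every $i$. Using the duality $\mathcal{E}(\Gamma_i)^\ast \cong \dot{H}^{1/2}(\Gamma_i)$ recalled in the introduction, $K_i - \lambda$ is invertible on $\dot{H}^{1/2}(\Gamma_i)$ with inverse $T_i$. The piece $\chi_0(K-\lambda)\chi_0$ sees only edges and face interiors: near each edge of dihedral angle $\beta$, $K$ is locally modelled on a wedge whose essential spectrum on $H^{1/2}$ is $[-|1-\beta/\pi|/2, |1-\beta/\pi|/2]$, and since every such edge appears in some vertex cone $\Gamma_i$, that interval is contained in $\sigma(K_i^\ast, \mathcal{E}(\Gamma_i))$. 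By assumption, $\lambda$ therefore lies outside the wedge essential spectrum and a local inverse $T_{\text{edge}}$ exists; face interiors contribute only smoothing operators. Patching $R = \sum_{i \geq 1} \chi_i T_i \chi_i + \chi_0 T_0 \chi_0$ and applying commutator compactness again shows that $(K-\lambda)R - I$ and $R(K-\lambda) - I$ are compact, so $K-\lambda$ is Fredholm on $H^{1/2}(\partial\Omega)$.

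For the backward direction, suppose $\lambda \in \sigma(K_{i_0}^\ast, \mathcal{E}(\Gamma_{i_0}))$ for some vertex $v_{i_0}$. Since $K_{i_0}^\ast$ is self-adjoint on the Hilbert space $\mathcal{E}(\Gamma_{i_0})$, I extract a Weyl sequence $g_n$ with $\|g_n\|_{\mathcal{E}} = 1$, $g_n \rightharpoonup 0$, and $\|(K_{i_0}^\ast-\lambda)g_n\|_{\mathcal{E}} \to 0$. The cone $\Gamma_{i_0}$ is dilation-invariant, and both $K_{i_0}^\ast$ and the $\mathcal{E}(\Gamma_{i_0})$-norm scale homogeneously, so I rescale $g_n$ to be supported in $B(v_{i_0}, 1/n) \cap \Gamma_{i_0} = B(v_{i_0}, 1/n) \cap \partial\Omega$ and view it as a distribution on $\partial\Omega$. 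It then remains to check that (a) $\|g_n\|_{H^{-1/2}(\partial\Omega)} \asymp \|g_n\|_{\mathcal{E}(\Gamma_{i_0})}$ and (b) $\|(K^\ast-K_{i_0}^\ast)g_n\|_{H^{-1/2}(\partial\Omega)} \to 0$; both follow from the fact that, for distributions supported in a shrinking ball around $v_{i_0}$, the difference between the single layer potentials on $\partial\Omega$ and on $\Gamma_{i_0}$ acts as a smoothing error of lower order. Hence $g_n$ is a Weyl sequence for $K^\ast - \lambda$ on $H^{-1/2}(\partial\Omega)$, which by duality places $\lambda$ in $\sigma_{\ess}(K, H^{1/2}(\partial\Omega))$.

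The hardest step will be reconciling the nonlocal cone space $\mathcal{E}(\Gamma_i)$ with the Sobolev space $H^{-1/2}(\partial\Omega)$ under dilation to a vertex: establishing (a) and (b) above requires quantifying how the single layer potential on the bounded polyhedron differs from that on the infinite tangent cone for test functions concentrated near $v_i$, with the error measured in a dilation-compatible way. A secondary subtlety is ensuring Fredholmness of $\chi_0(K-\lambda)\chi_0$ without circularly invoking the cone theorem; this is handled by the wedge analysis of \cite{Per19} together with the observation that every dihedral angle of $\partial\Omega$ is realized as an edge angle of some vertex cone.
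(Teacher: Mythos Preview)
Your overall strategy---localization by a partition of unity, compactness of commutators, and Weyl sequences transported from the model cones---matches the paper's, but the execution differs in three notable ways. First, the paper's partition of unity consists \emph{only} of vertex cutoffs $\{\varphi_i\}_{i=1}^I$ with $\supp\varphi_i\subset\Gamma_i$; because the faces are flat, the tangent cone $\Gamma_i$ coincides with $\partial\Omega$ on the entire star of the vertex $\widetilde E_i$, so these cutoffs already cover all of $\partial\Omega$. Your extra piece $\chi_0$ away from vertices, and the attendant wedge analysis, are therefore unnecessary (and, as you note yourself, risk circularity). Second, for the forward direction the paper does not build a parametrix; it instead derives the a priori bound $\|f\|\lesssim\|(\lambda-K^\ast)f\|+\|Cf\|$ directly from invertibility of $K_i^\ast-\lambda$ on $\mathcal{E}(\Gamma_i)$ and then invokes self-adjointness of $K^\ast$ on $\mathcal{E}(\partial\Omega)$ to conclude Fredholmness. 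Your parametrix $R=\sum\chi_i T_i\chi_i$ has a genuine technical gap: the cone inverse $T_i$ acts on $\dot H^{1/2}(\Gamma_i)$ and does not preserve compact support, so $\chi_i T_i\chi_i$ is not obviously a well-defined bounded operator on $H^{1/2}(\partial\Omega)$, and the identity $(K-\lambda)R-I=\text{compact}$ requires several additional cutoff manipulations you have not supplied. The a priori estimate route avoids this entirely.

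Third, for the backward direction the paper does not rescale an abstract Weyl sequence; it relies on the explicit Mellin-transform construction (Lemma~4.3 and the proof of Theorem~5.4) which directly produces singular Weyl sequences for $K_{i_0}^\ast-\lambda$ on $\mathcal{E}(\Gamma_{i_0})$ with support in an arbitrarily small ball around the vertex. Your dilation argument is plausible, but you would first have to arrange compact support (elements of $\mathcal{E}(\Gamma)$ are distributions on an unbounded cone) and then check that weak convergence to $0$ survives the rescaling; neither step is automatic. Your points (a) and (b) are handled in the paper by the identification $\|\varphi_i f\|_{\mathcal{E}(\Gamma_i)}=\|\varphi_i f\|_{\mathcal{E}(\partial\Omega)}$ from Section~2.3 together with the off-diagonal compactness Lemma~5.7, rather than by any ``smoothing error'' estimate for the difference of single layer potentials.
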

As in the definition of $\mathcal{E}(\Gamma)$, it is via the single layer potential possible to endow $H^{-1/2}(\partial \Omega)$ with a scalar product that makes $K^\ast \colon H^{-1/2}(\partial \Omega) \to H^{-1/2}(\partial \Omega)$ into a self-adjoint operator, see Section~\ref{subsec:energyspace}. Therefore the remaining non-essential spectrum of $K \colon H^{1/2}(\partial \Omega) \to H^{1/2}(\partial \Omega)$ consists of a sequence of isolated eigenvalues. \red{Typically, eigenvalues can appear in the localization of operators, see \cite{LPS22} for a relevant illustration. However, we are not aware of any specific examples relevant to our setting.}

Our description of $\sigma_{\ess}(K, H^{1/2}(\partial \Omega)) \cap \R_+$ is particularly simple for convex polyhedra, since one only needs to consider the double layer potential of $-\Delta_{S^2} + 1/4$ to compute this interval. Note that spectral parameters $\lambda \in (0, 1/2)$ correspond to permittivities satisfying $\epsilon < -1$; in \cite[Section~8]{HKR20}, it is suggested that $\epsilon < -1$ is likely to be a necessary condition for the existence of surface plasmon waves on $\partial \Omega$. Finally, we remark that the plasmonic problem for a cube is of importance to nanoplasmonics \cite{Fuchs, grillet2011plasmon, HP13, Langbein, ruppin1996plasmon}. The numerics of \cite{HP13} suggest that
$$\mu_- = 0, \qquad \mu_+ \approx 0.34726$$
when $\hat{\Gamma}$ is an octant of $\R^3$.

\subsection{Organization} In Section~\ref{sec:prelim} we define the function spaces already mentioned, we recall some elements of Fredholm theory and extrapolation of compact operators, and we discuss Mellin operators with operator-valued convolution kernels. In Section~\ref{sec:mellinoncones} we examine the relationship between the Neumann--Poincar\'e operator on a polyhedral cone $\Gamma$ and the double layer potential operators $H(z)$ on the associated spherical polygon $\gamma$. Section~\ref{sec:L2a} contains all of our theory concerning $L^2_\alpha(\partial \Omega)$, while Section~\ref{sec:energy} treats the energy space case.


\section{Notation and Preliminaries} \label{sec:prelim}

\subsection{Polyhedral cones $\Gamma$} \label{subsec:polyhedral} \red{Throughout the paper, $\Omega$ will denote a simply connected bounded polyhedron with straight faces}.
	The localization of $K$ to a corner of $\partial \Omega$ leads us to consider integral operators on the boundary $\Gamma$ of an infinite polyhedral cone $\hat{\Gamma}$ which locally coincides with $\Omega$. Without loss of generality, we may assume that $\Gamma$ has its vertex at the origin. The faces of $\Gamma$ are open plane sectors $F_j$, $j=1,\dots,J$; the edges of $\Gamma$ are denoted by $\upsilon_j$. 
	
	We shall denote by $\gamma$ the intersection of $\Gamma$ with the two-dimensional unit sphere $S^2$. That is, $\gamma$ is a spherical polygon consisting of the circular arcs $\gamma_j=S^2\cap F_j$ and the corner points $E_j=\upsilon_j\cap S^2$, $j=1,\dots, J$. Let $\hat{\gamma} = S^2 \cap \hat{\Gamma}$. Then $\partial\hat{\gamma}=\gamma$ and $\hat{\Gamma} =\R_+\hat{\gamma}$ is the \red{polyhedral} cone with base $\hat{\gamma}$.

Let $C^\infty_{\arc}(\gamma)$ be the set of all Lipschitz continuous functions on $\gamma$ whose restrictions to $\gamma_j$ belong to $C^\infty(\bar{\gamma}_j
)$, $j=1,\dots,J$. In the vicinity of each corner $E_j$, we parametrize the adjacent arcs $\gamma_{j-1}$ and $\gamma_j$ by the arc lengths $s = s_{j-1}$ and $s = s_j$ from $E_j$. We fix a function $q \in C^\infty_{\arc}(\gamma)$ such that $q(\omega) = s$ for $\omega = \omega(s)$ in a neighborhood of each corner $E_j$, non-vanishing except at corner points. For $\alpha \in \R$, we let 
\[L^2_{\alpha}(\gamma)= L^2(\gamma, q^{-\alpha} \, d\omega) = \left \{f \, : \, \int_{\gamma}|f|^2q^{-\alpha} \, d \omega <\infty \right\}, \]
where $d\omega$ denotes the arc length measure on $\gamma$, and 
$$L^2_{\alpha}(\Gamma)= L^2(\R_+, dr)\otimes L^2_{\alpha}(\gamma) = L^2(\R_+ \times \gamma, q^{-\alpha} \, dr \, d\omega).$$
Note that the usual $L^2$-space $L^2(\Gamma)$ coincides with $L^2(\R_+, r \, dr)\otimes L^2_{0}(\gamma)$.
\subsection{Weighted $L^2$-spaces on $\partial \Omega$} \label{subsec:weightedL2}
Throughout, $\partial\Omega$ will denote the boundary of \red{the} simply connected bounded polyhedron $\Omega\subset \R^3$ with vertices $\widetilde{E}_i$, $i=1,\dots, I$ and faces $\widetilde{F}_j, $ $j=1,\dots,J.$ For each  $i=1,\dots,I$, let $\Gamma_i$ be the tangent polyhedral cone to $\partial\Omega$ at the corner $\widetilde{E}_i.$
 We define $C^\infty_{\face}(\partial\Omega)$ as the space of Lipschitz continuous functions on $\partial\Omega$ that are $C^\infty $ on the closure of each face $\widetilde{F}_j$. By a compactness argument, we can choose a partition of unity $\{\varphi_i\}_{i=1}^I \subset C^\infty_{\face}(\partial\Omega)$ on $\partial\Omega$, such that $\varphi_i \equiv 1$ in a neighborhood of $\widetilde{E}_i$, $\varphi_i \equiv 0$ in a neighborhood of $\bigcup_{j \neq i} \widetilde{E}_j$, and $\supp \varphi_i \subset \Gamma_i$.  Then, given a function $f$ on $\partial \Omega$, we can naturally understand $\varphi_i f$ as a function on $\Gamma_i$. We define, for $\alpha<1$, the space $L^2_\alpha(\partial\Omega)$ as the completion of $C^\infty_{\face}(\partial\Omega)$ in the norm
$$
\|f\|_{L^2_\alpha(\partial\Omega)}^2 =\sum_{1\le i\le I}\|\varphi_i f\|_{L^2_\alpha(\Gamma_i)}^2.
$$

\subsection{The energy spaces on $\partial \Omega$ and $\Gamma$} \label{subsec:energyspace}
Following an idea that dates back to Poincar\'e \cite{Cost07, KPS07}, the energy space  $\mathcal{E}(\partial\Omega)$ is introduced as the Hilbert space obtained by completing $L^2(\partial\Omega)$ in the positive definite scalar product
\begin{align*}
\langle f, g\rangle_{\mathcal{E}(\partial\Omega)}=\langle \Sg f,g\rangle_{L^2(\partial\Omega)},
\end{align*}
where $\Sg$ denotes the single layer potential on $\partial\Omega.$ The reason for introducing the energy space is that $K^\ast \colon \mathcal{E}(\partial \Omega) \to \mathcal{E}(\partial \Omega)$ is self-adjoint, owing to the Plemelj formula $\Sg K^\ast = K \Sg$.

When discussing the energy space, we will for technical reasons assume that $\Omega$ is Lipschitz. That is, $\partial \Omega$ is locally the graph of a Lipschitz function whose epigraph locally coincides with $\overline{\Omega}$. Under this assumption, $\mathcal{E}(\partial\Omega)$ is a space of distributions on $\partial \Omega$ which is isomorphic to the Sobolev space $H^{-1/2}(\partial\Omega)$ of index $-1/2$ along $\partial \Omega$ \cite{SW01}, with equivalent norms,
$$
\|f\|_{H^{-1/2}(\partial\Omega)}^2\simeq \|f\|^2_{\mathcal{E}(\partial \Omega)} = \langle \Sg f,f\rangle_{L^2(\partial\Omega)}.
$$

When $\Gamma$ is a Lipschitz polyhedral cone, we similarly introduce the energy space $\mathcal{E}(\Gamma)$ as the completion of the space of compactly supported $L^2(\Gamma)$-functions in the scalar product
\begin{equation*}
\langle f, g\rangle_{\mathcal{E}(\Gamma)}=\langle \Sg f,g\rangle_{L^2(\Gamma)},
\end{equation*}
where $\Sg$ now denotes the single layer potential on $\Gamma$. In this case, $\mathcal{E}(\Gamma)$ coincides with the dual of the fractional homogeneous Sobolev space $\dot{H}^{1/2}(\Gamma)$ on $\Gamma$ \cite[Theorem~14]{Per19}.

Now let $\Gamma_i$, $i=1,\dots,I,$ be the tangent polyhedral cones to $\partial\Omega$ at the corners of $\partial \Omega$, and let $\{\varphi_i\}_{i=1}^I$ be the partition of unity on $\partial \Omega$ described in Section~\ref{subsec:weightedL2}. Then, for $f \in L^2(\partial \Omega)$,
$\supp \varphi_if\subset \Gamma_i\cap\partial\Omega$, and therefore
\begin{equation*}
\|\varphi_i f\|_{\mathcal{E}(\Gamma_i)}^2=\langle \Sg(\varphi_i f),\varphi_if\rangle_{L^2(\Gamma_i)}=\langle \Sg (\varphi_if),\varphi_if\rangle_{L^2(\partial\Omega)}=\|\varphi_i f\|_{\mathcal{E}(\partial\Omega)}^2\simeq\|\varphi_i f\|_{H^{-1/2}(\partial\Omega)}^2.
\end{equation*}
By density and the fact that each $\varphi_i$ is a multiplier of $H^{-1/2}(\partial\Omega) \simeq \mathcal{E}(\partial \Omega)$, it follows that
$$
\|f\|_{\mathcal{E}(\partial\Omega)}^2 \simeq \sum_{1\le i\le I}\|\varphi_i f\|_{\mathcal{E}(\Gamma_i)}^2, \qquad f \in \mathcal{E}(\partial\Omega).
$$
\subsection{Fredholm theory} \label{subsec:spectraltheory}
\red{Let $X$ and $Y$ be Banach spaces and let $T$ be a bounded linear operator from $X$ to $Y$, that is, $T\in \mathcal{L}(X,Y)$. Let $\alpha(T)=\dim \text{Ker} (T)$ and $\beta(T)=\dim(Y/\text{Ran}(T)),$ where $\text{Ker} (T)\subseteq X$  and $\text{Ran}(T)\subseteq Y$ denote the nullspace and the range of $T$, respectively. We say that $T$ is a {\it Fredholm operator} if $\text{Ran}(T)$ is closed, $\alpha(T)<\infty$ and $\beta(T)<\infty$. On the other hand,  $T$ is a {\it upper semi-Fredholm operator} if $\text{Ran}(T)$ is closed and $\alpha(T)<\infty$, whereas  $T$ is a {\it lower semi-Fredholm operator} if $\text{Ran}(T)$ is closed and  $\beta(T)<\infty$. If the operator $T$ is either upper or lower semi-Fredholm, we shall say that it is {\it semi-Fredholm}. We will now recall some elements of Fredholm theory that will be useful for us. For a complete treatment, see \cite{Schechter}.}

\red{The following criterion is very useful.
\begin{proposition}
    Let $T\in \mathcal{L}(X,Y)$. Then $T$ is upper semi-Fredholm if and only if there is a Banach space $Z$, a compact operator $S:X\to Z$, and a constant $C>0$ such that
    $$
    \|x\|_X\le C \|Tx\|_Y+\|Sx\|_Z, \qquad \forall x\in X.
    $$
\end{proposition}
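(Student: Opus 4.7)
The plan is to prove the two implications separately, with the compactness of $S$ playing fundamentally different roles in each.

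For the forward implication, I would assume $T$ is upper semi-Fredholm, so that $\ker T$ is finite-dimensional and $\mathrm{Ran}(T)$ is closed. Since any finite-dimensional subspace of a Banach space is topologically complemented, I can write $X = \ker T \oplus M$ with $M$ closed, and let $P\colon X \to \ker T$ denote the continuous projection along $M$; as $P$ has finite rank, it is compact. The restriction $T|_M \colon M \to \mathrm{Ran}(T)$ is a continuous bijection onto a closed subspace of $Y$, so by the open mapping theorem it is bounded below: there is $c > 0$ with $\|y\|_X \le c\|Ty\|_Y$ for every $y \in M$. Taking $Z = X$ and $S = P$ and decomposing $x = Px + (I-P)x$ with $(I-P)x \in M$, the triangle inequality gives
$$\|x\|_X \le \|Px\|_X + \|(I-P)x\|_X \le \|Sx\|_Z + c\|T(I-P)x\|_Y = \|Sx\|_Z + c\|Tx\|_Y,$$
which is the required estimate.

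For the converse, the hypothesis must be used to derive both finite-dimensionality of $\ker T$ and closedness of $\mathrm{Ran}(T)$. Restricted to $\ker T$, the estimate reduces to $\|x\|_X \le \|Sx\|_Z$; given any sequence $(x_n)$ in the closed unit ball of $\ker T$, compactness of $S$ produces a subsequence along which $(Sx_{n_k})$ is Cauchy in $Z$, and the estimate transfers the Cauchy property to $(x_{n_k})$ itself. Thus the unit ball of $\ker T$ is precompact, so Riesz's theorem yields $\dim \ker T < \infty$.

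To obtain closedness of the range, I would again pick a closed complement $M$ of $\ker T$ and prove that $T|_M$ is bounded below. If not, there exists $(x_n) \subset M$ with $\|x_n\|_X = 1$ and $Tx_n \to 0$. By compactness of $S$ I may assume $(Sx_n)$ converges, and the hypothesis then gives
$$\|x_n - x_m\|_X \le C\|Tx_n - Tx_m\|_Y + \|Sx_n - Sx_m\|_Z \to 0,$$
so $(x_n)$ converges to some $x \in M$ with $\|x\|_X = 1$ and $Tx = 0$, contradicting $M \cap \ker T = \{0\}$. Hence $T|_M$ is bounded below, and $\mathrm{Ran}(T) = T(M)$ is closed. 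The main obstacle is exactly this closed-range step: without compactness of $S$ one cannot extract a convergent subsequence from $(Sx_n)$, and the a priori inequality cannot by itself rule out ``approximate kernel'' behaviour outside $\ker T$.
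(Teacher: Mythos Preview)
Your proof is correct and is essentially the standard textbook argument. Note, however, that the paper does not actually supply its own proof of this proposition: it is stated in Section~\ref{subsec:spectraltheory} as a recalled fact from Fredholm theory, with a reference to \cite{Schechter} for details. There is therefore nothing in the paper to compare against beyond observing that your argument is the classical one found in such references.
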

A fundamental quantity associated with a (semi-)Fredholm operator is its {\it index}
$$\ind(T)=\alpha(T)-\beta(T).$$ 
 \begin{proposition}
     Let $X,Y$ be Banach spaces and $T\in \mathcal{L}(X,Y)$ be a semi-Fredholm operator. If $K$ is  a compact operator from $X$ to $Y$, then $T+K$ is also semi-Fredholm and $\ind(T+K)=\ind(T).$
 \end{proposition}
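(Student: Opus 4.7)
The plan is to prove the statement in two stages: first, that $T+K$ is again semi-Fredholm, and second, that its index equals that of $T$.

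For the first stage, suppose $T$ is upper semi-Fredholm. By the preceding proposition, there exist a Banach space $Z$, a compact operator $S\colon X\to Z$, and a constant $C>0$ such that
\[
\|x\|_X \le C\|Tx\|_Y + \|Sx\|_Z, \qquad \forall x\in X.
\]
Writing $Tx=(T+K)x - Kx$ and applying the triangle inequality yields
\[
\|x\|_X \le C\|(T+K)x\|_Y + C\|Kx\|_Y + \|Sx\|_Z.
\]
Defining $S'\colon X\to Y\oplus Z$ by $S'x=(CKx,Sx)$, where $Y\oplus Z$ is endowed with the sum norm, one sees that $S'$ is compact (being a direct sum of compact operators). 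Applying the preceding proposition once more shows that $T+K$ is upper semi-Fredholm. The lower semi-Fredholm case reduces to this by duality: Schauder's theorem gives that $K^\ast$ is compact, $T$ is lower semi-Fredholm if and only if $T^\ast$ is upper semi-Fredholm, and $(T+K)^\ast = T^\ast + K^\ast$.

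For the second stage, I would use a homotopy argument. Consider the continuous path $t\mapsto T_t := T+tK$ for $t\in[0,1]$ in $\mathcal{L}(X,Y)$. Since $tK$ is also compact, the first stage ensures that each $T_t$ is semi-Fredholm of the same type as $T$. I would then invoke the fundamental stability result of Fredholm theory: the index map $\ind$ is locally constant on the (open) set of semi-Fredholm operators in $\mathcal{L}(X,Y)$, with values in $\mathbb{Z}\cup\{\pm\infty\}$. Combined with the continuity of $t\mapsto T_t$ and the connectedness of $[0,1]$, this forces $\ind(T_t)$ to be constant on the interval, and in particular $\ind(T+K)=\ind(T)$.

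The main obstacle is the local constancy of the index, which itself requires a nontrivial stability theorem: for any semi-Fredholm $T$ there exists $\delta>0$ such that every $A\in\mathcal{L}(X,Y)$ with $\|T-A\|<\delta$ is semi-Fredholm with $\ind(A)=\ind(T)$. Its proof proceeds by choosing a closed topological complement $X_1$ of $\mathrm{Ker}(T)$ in $X$ together with a topological complement $C$ of $\mathrm{Ran}(T)$ in $Y$; in the upper semi-Fredholm case $\mathrm{Ker}(T)$ is finite-dimensional while $C$ may fail to be, and in the lower case the situation is reversed. A Neumann-series argument shows that the restriction $T|_{X_1}\colon X_1\to\mathrm{Ran}(T)$, which is an isomorphism, remains an isomorphism after small perturbations, and careful bookkeeping of the finite-dimensional pieces yields both the persistence of semi-Fredholmness and the invariance of the index. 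Once this stability is in hand, the proof of the proposition is completed cleanly by the homotopy argument outlined above.
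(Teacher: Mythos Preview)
The paper does not actually prove this proposition: it is stated in Section~2.4 as one of several standard facts from Fredholm theory, with the reader referred to Schechter's textbook for a complete treatment. There is therefore no proof in the paper to compare against.

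Your argument is correct and follows the standard route. The first stage is clean: the use of the upper semi-Fredholm criterion from the preceding proposition is exactly how one typically shows stability under compact perturbations, and the duality reduction for the lower semi-Fredholm case is valid (closed range theorem plus the identification $\mathrm{Ker}(T^\ast)\cong(Y/\mathrm{Ran}(T))^\ast$ when $\mathrm{Ran}(T)$ is closed). The second stage via the homotopy $t\mapsto T+tK$ and local constancy of the index is also the standard argument. You rightly flag that the real work lies in the small-perturbation stability theorem for the index; this is a genuine theorem (e.g.\ Kato or Schechter) rather than something immediate, and your sketch of its proof via complementary subspaces and a Neumann-series argument is accurate. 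Since the paper simply quotes the result, your write-up in fact supplies more detail than the paper does.
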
}
 \red{Furthermore, the composition of Fredholm operators $T$ and $S$ is again Fredholm and $\ind(TS)=\ind(T)+\ind(S)$. This formula is also true for semi-Fredholm operators, as long as the right-hand side makes sense.}
 
 \red{
For an operator $T\in \mathcal{L}(X) = \mathcal{L}(X,X)$ we shall call
 $$
 \sigma_{\ess}(T,X):=\{\lambda\in\C: \, \lambda I-T \text{ is not Fredholm}\}
 $$
 the {\it essential spectrum} of $T$. A sequence $\{x_n\}_{n\in\N}$ such that $\|x_n\|_X=1$ for all $n$, $x_n\to 0$ weakly in $X$, and $\|(T-\lambda)x_n\|_X \to 0$, as $n\to\infty,$ is called a {\it Weyl sequence}. If $\{x_n\}_{n\in\N}$ is a Weyl sequence for an operator $T\in \mathcal{L}(X)$ and $\lambda\in\C$, then $\lambda\in \sigma_{\ess}(T,X).$ The converse is also true when $X$ is a Hilbert space and $T$ is self-adjoint.}

\subsection{Extrapolation of compactness} \label{subsec:extrapolation}
\red{When treating the energy space case in Section \ref{sec:energy} we will sometimes rely on the extrapolation result of Cwikel, \cite{Cwikel}, in order to establish the compactness of certain operators.}

\red{For a compatible couple of Hilbert spaces $(A_0,A_1)$ and $0<\theta<1$, let $(A_0,A_1)_{\theta}$ denote the real interpolation space between $A_0$ and $A_1$. Since we are dealing with Hilbert spaces, we will always assume that $q=2$ in the real interpolation method, omitting it from the notation. Also note that complex and real interpolation coincide in the Hilbert space case, see \cite{CHM}.
\begin{definition}[K-method]
Let $(A_0, A_1)$ be a compatible couple of Hilbert spaces and $0<\theta<1.$ The space $(A_0,A_1)_{\theta}$ consists of all $f\in A_0+A_1$ for which the functional
$$
\|f\|_\theta:=\left(\int_0^\infty (t^{-\theta}K(f,t;A_0,A_1))^2\frac{dt}{t} \right)^{1/2}
$$
is finite, 
where $$K(f,t;A_0,A_1):=\inf\{\|f_0\|_{A_0}+t\|f_1\|_{A_1}:\: f=f_0+f_1,\: f_0\in A_0, \;f_1\in A_1\}.$$
\end{definition}}
\red{We will use extrapolation in the scale of Sobolev spaces, see \cite{MMTrans}.}
\red{\begin{proposition}
Let $\Sigma$ be a bounded Lipschitz domain, $0<\theta<1$ and  $0\le s_0,s_1\le 1$, with $s_0\neq s_1.$ Then
\begin{align*}
    (H^{s_0}(\partial\Sigma), H^{s_1}(\partial\Sigma))_\theta&=H^{s}(\partial\Sigma)\\
     (H^{-s_0}(\partial\Sigma), H^{-s_1}(\partial\Sigma))_\theta&=H^{-s}(\partial\Sigma),
\end{align*}
where $s=(1-\theta)s_0+\theta s_1$.
\end{proposition}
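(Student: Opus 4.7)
The plan is to reduce the positive-index identity to the classical interpolation identity for Sobolev spaces on Euclidean space and then to obtain the negative-index identity by duality. First I would fix an atlas $\{(U_i,\chi_i)\}_{i=1}^N$ of $\partial \Sigma$ by Lipschitz graph charts $\chi_i \colon U_i \to V_i \subset \R^{n-1}$, together with a subordinate partition of unity $\{\varphi_i\}$ and cutoffs $\tilde\varphi_i$ with $\tilde\varphi_i \equiv 1$ on $\supp \varphi_i$ and $\supp \tilde\varphi_i \subset U_i$. Using this atlas, $H^s(\partial \Sigma)$ for $0 \leq s \leq 1$ is defined intrinsically by the norm
$$
\|f\|_{H^s(\partial \Sigma)}^2 = \sum_{i=1}^N \|(\varphi_i f) \circ \chi_i^{-1}\|_{H^s(\R^{n-1})}^2,
$$
and its equivalence class is independent of the choice of atlas because multiplication by Lipschitz functions and composition with bi-Lipschitz homeomorphisms preserve $H^s$ in this range.

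Next I would invoke the classical identity
$$
(H^{s_0}(\R^{n-1}), H^{s_1}(\R^{n-1}))_\theta = H^s(\R^{n-1}), \qquad s = (1-\theta)s_0 + \theta s_1,
$$
which follows from Plancherel's theorem (reducing to weighted $L^2$-spaces on the Fourier side) and a direct computation of the $K$-functional. To transfer it to $\partial\Sigma$, I would set up the retraction and coretraction
$$
R \colon H^s(\partial \Sigma) \to \bigoplus_{i=1}^N H^s(\R^{n-1}), \qquad Rf = \bigl( (\varphi_i f)\circ \chi_i^{-1} \bigr)_{i=1}^N,
$$
$$
C \colon \bigoplus_{i=1}^N H^s(\R^{n-1}) \to H^s(\partial \Sigma), \qquad C\bigl((g_i)_{i=1}^N\bigr) = \sum_{i=1}^N \tilde\varphi_i \cdot (g_i \circ \chi_i),
$$
satisfying $CR = \Id$ on $H^s(\partial\Sigma)$ with bounds uniform in $s \in [0,1]$. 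Since the real interpolation functor commutes with such retract pairs, the first identity of the proposition follows from its Euclidean counterpart applied coordinatewise in each summand.

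The second identity is then obtained by duality. For $0 \leq s \leq 1$ one has $H^{-s}(\partial\Sigma) \simeq (H^s(\partial\Sigma))^\ast$ via the $L^2(\partial\Sigma)$-pairing, and because $H^{s_0}(\partial\Sigma) \cap H^{s_1}(\partial\Sigma)$ is dense in each endpoint and both endpoints are reflexive Hilbert spaces, the real interpolation method commutes with duality. This yields
$$
(H^{-s_0}(\partial\Sigma), H^{-s_1}(\partial\Sigma))_\theta \simeq \bigl((H^{s_0}(\partial\Sigma), H^{s_1}(\partial\Sigma))_\theta\bigr)^\ast \simeq (H^s(\partial\Sigma))^\ast \simeq H^{-s}(\partial\Sigma).
$$

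The main obstacle, and the reason for the restriction $0 \leq s_0, s_1 \leq 1$, is the limited regularity available from the Lipschitz charts. Many textbook interpolation arguments for Sobolev spaces on manifolds assume $C^\infty$ charts, but here the bi-Lipschitz invariance and multiplier estimates needed to make the atlas-based definition unambiguous only survive while $|s| \leq 1$, forcing this hypothesis. Outside this window one would need either additional regularity of $\partial\Sigma$ or an intrinsic description of the Sobolev scale (for instance via Besov spaces or the $b$-calculus) to identify the interpolation spaces with classical Sobolev spaces, which is precisely the point at which the statement ceases to be elementary.
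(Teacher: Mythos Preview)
Your argument is correct and is essentially the standard proof of this fact: pull back to $\R^{n-1}$ via a Lipschitz atlas, use the retraction/coretraction principle to transfer the Euclidean interpolation identity, and then dualize for the negative indices. The paper, however, does not prove this proposition at all; it simply quotes it as a known result from Mitrea--Mitrea \cite{MMTrans}. So there is nothing to compare at the level of proof strategy, but your sketch is a faithful reconstruction of how such a result is established, and your closing remark about the restriction $|s|\le 1$ correctly identifies why the Lipschitz hypothesis forces this range.
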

Finally, we state the extrapolation result.
\begin{theorem}\cite[Theorem 1.1]{Cwikel}.
Let $(A_0,A_1)$ and $(B_0,B_1)$ be two compatible couples of Banach spaces and let $T:(A_0,A_1)\to (B_0,B_1)$ be a linear operator such that $T:{A}_0\to {B}_0$ is bounded and $T:A_1\to B_1$ is compact. Then $T:(A_0,A_1)_{\theta} \to(B_0,B_1)_{\theta}$ is compact for every $\theta\in (0,1)$.
\end{theorem}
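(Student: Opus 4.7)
The plan is to show that $T$ maps bounded sequences in $(A_0,A_1)_\theta$ to relatively compact sets in $(B_0,B_1)_\theta$. I would work with the standard discrete equivalent of the $K$-method norm,
$$\|f\|_\theta^2 \simeq \sum_{k\in\Z} \bigl(2^{-k\theta}K(f,2^k;A_0,A_1)\bigr)^2,$$
so that compactness reduces to verifying, for every bounded sequence $\{f_n\}\subset (A_0,A_1)_\theta$, that a subsequence of $\{Tf_n\}$ is Cauchy with respect to the analogous discrete norm on $(B_0,B_1)_\theta$.

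For each $k\in\Z$ and each $n$, fix a near-optimal decomposition $f_n = f_n^0(k) + f_n^1(k)$ with
$$\|f_n^0(k)\|_{A_0} + 2^k\|f_n^1(k)\|_{A_1}\le 2K(f_n,2^k;A_0,A_1).$$
For each fixed $k$, the sequence $\{f_n^1(k)\}_n$ is bounded in $A_1$, so by the compactness of $T\colon A_1\to B_1$ we may extract a subsequence along which $\{Tf_n^1(k)\}_n$ converges in $B_1$. A diagonal extraction across $k\in\Z$ yields a single subsequence, which I will still call $\{f_n\}$, such that $\{Tf_n^1(k)\}_n$ is Cauchy in $B_1$ for every $k$.

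The core estimate is to bound $K(T(f_n-f_m),2^k;B_0,B_1)$ in a way that becomes summable against the weights $2^{-2k\theta}$ and tends to zero as $n,m\to\infty$. The naive splitting using the decompositions above gives
$$K(T(f_n-f_m),2^k;B_0,B_1) \le \|T\|_{A_0\to B_0}\,\|f_n^0(k)-f_m^0(k)\|_{A_0} + 2^k\,\|T(f_n^1(k)-f_m^1(k))\|_{B_1}.$$
The first summand is controlled uniformly in $n,m$ by $K(f_n,2^k;A_0,A_1)+K(f_m,2^k;A_0,A_1)$, which is summable in $k$ thanks to the hypothesis $\|f_n\|_\theta \le C$. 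The second summand, on the other hand, is small pointwise in $k$ but not a priori summably so, because of the factor $2^k$.

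Reconciling these two behaviours is the main obstacle, and it is the heart of Cwikel's argument. The idea is a two-scale splitting: given $\epsilon>0$, choose $k_0=k_0(\epsilon)$ so that the tails
$$\sum_{|k|> k_0}\bigl(2^{-k\theta}K(f_n,2^k;A_0,A_1)\bigr)^2$$
are smaller than $\epsilon$ uniformly in $n$ (possible after refining the subsequence so that this equi-integrability holds, for instance by bounding the sequence of $K$-functionals by an $\ell^2$-majorant with the help of the weak compactness coming from boundedness). On the finite range $|k|\le k_0$ one exploits the pointwise-in-$k$ convergence of $\{Tf_n^1(k)\}_n$ to make the contribution small for $n,m$ large. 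Combining the two ranges gives the Cauchy property of $\{Tf_n\}$ in $(B_0,B_1)_\theta$, and hence the desired compactness. The Hilbert-space case needed in the body of the paper is in no way simpler at this level; the difficulty lies entirely in the interplay between the two norms, not in the fact that $q=2$.
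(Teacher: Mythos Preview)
The paper does not give its own proof of this statement; it is simply quoted from \cite{Cwikel} as a tool, so there is nothing on the paper's side to compare against.

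Your outline has a genuine gap at the equi-integrability step. From $\sup_n\|f_n\|_\theta<\infty$ you only know that each sequence $k\mapsto 2^{-k\theta}K(f_n,2^k;A_0,A_1)$ lies in a fixed ball of $\ell^2(\Z)$; it does \emph{not} follow, even after passing to a subsequence, that the tails $\sum_{|k|>k_0}(\cdots)^2$ are small uniformly in $n$. The fix you propose --- extract a weakly convergent subsequence in $\ell^2$ and use the weak limit as an $\ell^2$-majorant --- fails, since weak limits are not pointwise upper bounds (the standard unit vectors in $\ell^2$ already show this). For a concrete obstruction in an interpolation couple, take $A_0=\ell^2(\Z)$, $A_1=\ell^2(\Z,2^{2j})$ and $f_n=2^{-n\theta}e_n$: then $\|f_n\|_\theta\simeq 1$, but the $\ell^2$-mass of $k\mapsto 2^{-k\theta}K(f_n,2^k)$ is concentrated near $k=-n$, so no subsequence has uniformly small tails. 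This is exactly the obstacle that kept the general result open before Cwikel's paper; earlier results (Lions--Peetre and others) required additional hypotheses such as an approximation property on one of the couples. Cwikel's actual proof circumvents the tail problem by a different mechanism, and your sketch cannot be completed along the line you indicate without a further idea of that kind.
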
}

\subsection{Mellin convolution operators} \label{subsec:mellinconv}
	Recall that the Mellin transform of a sufficiently nice function $f \colon \mathbb{R}_+ \to \mathbb{C}$ is defined by
$$
\mathcal{M}f(z)=\int_0^\infty f(t)t^{z} \, \frac{dt}{t}, \qquad z \in \mathbb{C}.
$$
Up to a scaling factor,  the Mellin transform induces a unitary map 
$$\MM: L^2(\R_+,t^{2m-1}dt)\rightarrow L^2(\{\mre z=m\}, |dz|), \qquad m \in \R.$$
The Mellin convolution of appropriate functions $f$ and $g$ is given by
$$
(f\ast g)(s)=\int_0^\infty f(s/t)g(t)\frac{dt}{t},
$$
and
$$
\MM ( f \ast g)(z)=\MM f(z) \MM g(z).
$$
Referring back to the notation for polyhedral cones introduced in Section~\ref{subsec:polyhedral}, for a function $f$ on $\Gamma$ or $\hat{\Gamma}$, we shall also write $\MM f(z)$ for the Mellin transform in the radial variable,
$$(\MM f(z))(\omega) = \int_0^\infty f(r\omega) r^{z} \, \frac{dr}{r}.$$
Here $x = r\omega$ has been written in spherical coordinates; $r = \dist(0, x)$ and $\omega \in \gamma$ or $\omega \in \hat{\gamma}$.

We say that an operator $T \colon C_c^\infty(\R_+ \times \cup \gamma_j) \to L^2_{\loc}(\Gamma)$ has an operator-valued convolution kernel $T(t, \omega, \omega')$ if $T$ is of the form
$$T u (r\omega) = \int_{\mathbb{R}_+ \times \gamma} T(r/r', \omega, \omega') u(r'\omega')\, d\omega' \frac{dr'}{r'}.$$
 When convergent, we shall then denote by $\MM T(z) \colon C^\infty_c(\cup \gamma_j) \to L^2(\gamma)$ the operator given by the integral kernel
$$(\MM T(z))(\omega, \omega') = \int_0^\infty T(t, \omega, \omega') t^{z} \, \frac{dt}{t}.$$
We also make use of the analogous terminology and notation for operators $T \colon C^\infty_c(\hat{\Gamma}) \to L^2_{\loc}(\hat{\Gamma})$.

Consider the multiplication operator $M_{r^{1/2}}$ defined by
$$M_{r^{1/2}} f(r\omega)=r^{1/2} f(r\omega).$$
 Observe that if $T$ is a Mellin convolution operator with kernel $T(t,\omega,\omega')$, then $M_{r^{1/2}}TM_{r^{-1/2}}$ is also a Mellin convolution operator with kernel $t^{1/2}T(t,\omega,\omega')$. Therefore, at least formally,
\begin{equation*}
\MM(M_{r^{1/2}}TM_{r^{-{1/2}}}f)(z)= \MM(M_{r^{1/2}}TM_{r^{-{1/2}}})(z)\MM f(z)=\MM T(z+1/2)\MM f(z).
\end{equation*}
Furthermore,
\begin{equation} \label{eq:mellinshifted}
\MM M_{r^{1/2}}: L^2_\alpha(\Gamma) \to L^2(\{\mre z =0\}, |dz|) \otimes L^2_\alpha(\gamma)
\end{equation}
is unitary up to a scaling factor, where, for sufficiently nice $f \in L^2_\alpha(\Gamma)$ and $\mre z = 0$,
$$
(\MM M_{r^{1/2}}f(z))(\omega)=\int_0^\infty t^{1/2}f(t\omega)t^{z} \, \frac{dt}{t}=(\MM f(z+1/2))(\omega).
$$

Let $\alpha < 1$. Via \eqref{eq:mellinshifted}, any Mellin convolution operator that extends to a bounded operator $T \colon L^2_\alpha(\Gamma) \to L^2_\alpha(\Gamma)$ is therefore unitarily equivalent to 
$$I \otimes \MM T(i\xi+1/2) \colon L^2(\mathbb{R}, d\xi) \otimes L^2_\alpha(\gamma) \to L^2(\mathbb{R}, d\xi) \otimes L^2_\alpha(\gamma).$$
With this in mind, we record the following elementary lemma for future use. We provide a proof to preserve the concrete presentation pursued in this section.
	\begin{lemma} \label{lem:tensorbdd}
	Let $\mathcal{H}$ be a separable Hilbert space and let $\{A(\xi)\}_{\xi \in \mathbb{R}}$ be a strongly measurable family of operators $A(\xi) \colon \mathcal{H} \to \mathcal{H}$ such that $\sup_{\xi}\|A(\xi)\| < \infty.$ Then 
	$$I\otimes A(\xi) \colon L^2(\R,d\xi)\otimes\mathcal{H} \to L^2(\R,d\xi)\otimes\mathcal{H}$$
	defines a bounded operator, and
	$$\| I\otimes A(\xi)\| \leq \sup_{\xi\in\R}\|A(\xi)\|.$$
	\end{lemma}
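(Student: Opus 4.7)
The plan is to exploit the standard identification of the Hilbert tensor product $L^2(\R, d\xi) \otimes \mathcal{H}$ with the Bochner space $L^2(\R, d\xi; \mathcal{H})$ of (equivalence classes of) strongly measurable $\mathcal{H}$-valued functions $f \colon \R \to \mathcal{H}$ with $\int_\R \|f(\xi)\|_\mathcal{H}^2 \, d\xi < \infty$. Under this identification, the operator $I \otimes A(\xi)$ acts on elementary tensors $g \otimes h$ by $(I\otimes A(\xi))(g\otimes h)(\xi) = g(\xi) A(\xi) h$, so on general $f \in L^2(\R; \mathcal{H})$ it should be defined fiberwise by
\begin{equation*}
    \bigl((I\otimes A(\xi))f\bigr)(\xi) = A(\xi) f(\xi).
\end{equation*}

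First I would verify that this fiberwise formula produces a strongly measurable $\mathcal{H}$-valued function whenever $f$ is. For $f$ of the form $f(\xi) = g(\xi) h$ with $g$ scalar measurable and $h \in \mathcal{H}$ fixed, strong measurability of $\xi \mapsto A(\xi)h$ is given by hypothesis, so $\xi \mapsto g(\xi) A(\xi) h$ is measurable. By linearity and a density/approximation argument (approximating general $f \in L^2(\R; \mathcal{H})$ by simple functions $\sum g_k(\xi) h_k$), measurability extends to all $f \in L^2(\R; \mathcal{H})$.

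Next, the norm bound is purely pointwise. Writing $M := \sup_{\xi \in \R} \|A(\xi)\|_{\mathcal{L}(\mathcal{H})}$, we have for almost every $\xi$ that
\begin{equation*}
    \|A(\xi) f(\xi)\|_\mathcal{H} \leq \|A(\xi)\|_{\mathcal{L}(\mathcal{H})} \|f(\xi)\|_\mathcal{H} \leq M \|f(\xi)\|_\mathcal{H}.
\end{equation*}
Squaring and integrating in $\xi$ yields
\begin{equation*}
    \|(I\otimes A(\xi)) f\|^2_{L^2(\R;\mathcal{H})} = \int_\R \|A(\xi) f(\xi)\|^2_\mathcal{H} \, d\xi \leq M^2 \int_\R \|f(\xi)\|^2_\mathcal{H} \, d\xi = M^2 \|f\|^2_{L^2(\R;\mathcal{H})},
\end{equation*}
which gives both boundedness and the norm estimate $\|I \otimes A(\xi)\| \leq M$. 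Finally, one should check that this fiberwise-defined operator agrees with the algebraic tensor product operator $I \otimes A(\xi)$ on the dense subspace of finite sums of elementary tensors, which is immediate from the formula applied to $g \otimes h$.

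The only conceptual step requiring any care is the measurability verification, since $A(\xi)$ is only assumed strongly (not norm) measurable; however, the density reduction to simple functions of the form $g(\xi) h$ handles this cleanly, so no serious obstacle is anticipated.
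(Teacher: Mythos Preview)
Your proof is correct and follows essentially the same strategy as the paper: both identify $L^2(\R)\otimes\mathcal{H}$ with $\mathcal{H}$-valued functions on $\R$, apply $A(\xi)$ fiberwise, use the pointwise bound $\|A(\xi)f(\xi)\|_{\mathcal{H}}\le M\|f(\xi)\|_{\mathcal{H}}$, and integrate. The paper carries this out concretely on finite sums in a fixed orthonormal basis before extending by density, whereas you invoke the Bochner-space identification directly and are a bit more careful about measurability; these are presentational rather than substantive differences.
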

	
	\begin{proof}
	Fix orthonormal bases $\{e_j\}_j$ and $\{f_k\}_k$ of $L^2(\R,d\xi)$ and $\mathcal{H}$, respectively. Then every $h\in L^2(\R,d\xi)\otimes\mathcal{H}$ can be written $h =\sum_{j,k}a_{j,k}e_j \otimes f_k$, where $\|h\|^2 = \sum_{j,k} |a_{j,k}|^2$. First assume that the sum is finite. Then
	\begin{align*}
	    	\| (I\otimes A(\xi))h\|_{L^2(\R,d\xi)\otimes\mathcal{H}}^2&=\int_{\R}\Big\|A(\xi)\Big(\sum_{j,k}a_{j,k}e_j(\xi) f_k \Big)\Big\|_{\mathcal{H}}^2 \, d\xi\\
	    	&\le\sup_{\xi\in\R}\|A(\xi)\|^2\int_{\R}\Big\|\sum_{j,k}a_{j,k}e_j(\xi) f_k \Big\|_{\mathcal{H}}^2 \, d\xi\\
	    	&=\sup_{\xi\in\R}\|A(\xi)\|^2\int_{\R}\sum_k\Big|\sum_{j}a_{j,k}e_j(\xi) \Big|^2 \, d\xi\\
	    	&=\sup_{\xi\in\R}\|A(\xi)\|^2\sum_{j,k}|a_{j,k}|^2=\sup_{\xi\in\R}\|A(\xi)\|^2\|h\|^2.
	\end{align*}
	The statement now follows in the usual manner, extending by density the domain of definition of $I \otimes A(\xi)$ from finite sums to arbitrary $h$.
	\end{proof}
\subsection{Localizations of Mellin operators} \label{subsec:mellinpdo}
\red{There is a well-developed theory also of pseudo-differential operators of Mellin type \cite{Elschner1987, Lewis1990, Mitreapolygon}. For our purposes, we only need to apply the results for a localized (scalar) Mellin convolution operator. The formulation described here can be deduced from Theorem~1 in \cite{Lewis1990} and the subsequent remark.}

\red{Suppose that $T$ is a Mellin convolution operator 
$$Tf(s) = \int_0^\infty f(t) a(s/t) \, \frac{dt}{t}, \qquad s > 0,$$
with a convolution kernel $a(s/t)$ for which there are $\alpha < 0 < \beta$ such that 
\begin{equation*}
\sup_{\alpha \leq \mre z \leq \beta}  \left|(1+|z|)^{n+1} \left(\frac{d}{dz}\right)^n \mathcal{M} a(z) \right| < \infty, \qquad n=0,1,2,\ldots
\end{equation*}
Let $\varphi \in C^\infty([0,1])$ be a cut-off function such that $\varphi \equiv 1$ in a neighborhood of $0$ and $\varphi \equiv 0$ in a neighborhood of $1$. Then the essential spectrum of $\varphi T \varphi \colon L^2([0,1], dt/t) \to L^2([0,1], dt/t)$ is given by
\begin{equation*}
\sigma_{\ess}(\varphi T \varphi, L^2([0,1], dt/t)) = \{ \mathcal{M}a(i\xi) \, : \, -\infty \leq \xi \leq \infty \}.
\end{equation*}
Furthermore, when $\lambda$ does not belong to this curve, then the Fredholm index of $\varphi T \varphi - \lambda$ coincides with the winding number of $\lambda$ with respect to the essential spectrum.}
\subsection{Asymptotics of certain Mellin transforms}
In this subsection we record the asymptotics of the Mellin transforms of some functions that appear in connection with our analysis of single and double layer potentials on polyhedral cones. Exact formulas in terms of known special functions can be deduced from \cite[p. 257, formula (9.7.5)]{Lebedev} and \cite[p. 25, formula 2.64]{Oberhettinger}.

For $-1 \leq a < 1$ and $-3/2 < \mre z < 3/2$, we have that
\begin{equation}\label{eq:KMellinasymp}
 \int_0^\infty \frac{t^{z+3/2}}{(t^2-2at+1)^{3/2}}\frac{dt}{t} = \frac{1}{1-a} + O(1+|\log(1-a)|),
 \end{equation}
where the implied constant depends on $z$. 

For $-1 \leq a < 1$ and $1 < \mre z < 2$, we have that
\begin{equation}\label{eq:PhiMellinasymp}
\int_0^{\infty} \frac{t^{z-1}}{(t^2-2at+1)^{1/2}}\frac{dt}{t} = - \log\left( \frac{1-a}{2}\right) + O(1),
\end{equation}
where, again, the implied constant depends on $z$.
		\section{The Mellin transform and layer potential operators on cones} \label{sec:mellinoncones}
	
\subsection{Identification of $\MM K(z)$} \label{subsec:ident}
Let $\Gamma$ be a polyhedral cone. \red{In any study of the double layer potential on $\Gamma$, it is essential to analyze the Mellin transform $\MM K(z)$, as seen in \cite{Elschner, QN, Rat92}. An explicit identification of $\MM K(z)$ was made by Qiao and Nistor \cite{QN}, in terms of layer potentials for Schr\"odinger operators on the spherical polygon $\gamma$. We will therefore recall a number of their calculations.}
\red{We remind the reader that the single layer potential of $-\Delta$ on $\Gamma$ is given by 
	\begin{equation*}
	    \mathcal{S}u(x) =\frac{1}{4\pi}\int_{\Gamma}\frac{u(y)}{|x-y|} \, dS(y), \qquad {x\in\Gamma},  
	\end{equation*}
		 and that
 the direct value of the double layer potential of $u$ on $\Gamma$ is defined by
	\begin{align*}
	Ku(x)=-\frac{1}{4\pi}\int_{\Gamma} u(y)\frac{\partial}{\partial n_y}\frac{1}{|x-y|} \, dS(y)=\frac{1}{4\pi}\int_{\Gamma} u(y)\frac{(y-x)\cdot n_y}{|x-y|^3} \, dS(y), \qquad x\in\Gamma,
	\end{align*}
	 where $dS$ denotes the standard surface measure on $\Gamma$ and $n_y$ is the outward normal vector at a.e. $y \in \Gamma$.}

In spherical coordinates, $x=r\omega$ and $y=r' \omega'$, where $r=\dist(0,x)$, $r' = \dist(0, y)$, and $\omega, \omega' \in \gamma$, we have that
\begin{equation*} 
	\begin{aligned}
	\mathcal{S}u(r\omega)&=\frac{1}{4\pi}\int_{\R_+\times\gamma} \frac{r'}{|r\omega-r'\omega'|}u(r'\omega') \, d\omega' dr' \\
	&=\frac{1}{4\pi}\int_{\R_+\times\gamma} \frac{u(r'\omega')}{|(r/r')\omega-\omega'|} \,d\omega' {dr'}, \qquad x\in\Gamma,
	\end{aligned}
	\end{equation*}
	and
	\begin{equation*}
	\begin{aligned}
	Ku(r\omega)&=-\frac{1}{4\pi}\int_{\R_+\times\gamma} u(r'\omega')\frac{rr' \omega \cdot n_{\omega'}}{|r\omega-r'\omega'|^3} \, d\omega' dr' \\
	&=-\frac{1}{4\pi}\int_{\R_+\times\gamma} u(r'\omega')\frac{(r/r')\omega \cdot n_{\omega'}}{|(r/r')\omega-\omega'|^3} \, d\omega' \frac{dr'}{r'}, \qquad x\in\Gamma.
	\end{aligned}
	\end{equation*} 
	Therefore $\mathcal{S}_0:=M_{r^{-1/2}}\mathcal{S} M_{r^{-1/2}}$ is a Mellin convolution operator with operator-valued convolution kernel
	\begin{equation}\label{kernelS0}
	\mathcal{S}_0(t,\omega,\omega')=\frac{1}{4\pi}\frac{1}{t^{1/2} |t\omega-\omega'|},
	\end{equation}
	 while $K$ is itself a Mellin convolution operator with with kernel
	\begin{equation} \label{eq:Kismellinconv}
	K(t,\omega,\omega')=-\frac{1}{4\pi} \frac{t \omega \cdot n_{\omega'}}{|t \omega-\omega'|^3}=-\frac{1}{4\pi}\frac{t\omega \cdot n_{\omega'}}{(t^2-2t\omega\omega'+1)^{3/2}}.
	\end{equation}

	Let $\Phi:C_c^\infty(\hat{\Gamma})\rightarrow L^2_{\loc}(\hat{\Gamma})$ be the standard fundamental solution of $-\Delta$, understood as the operator defined by
	\begin{align*}
	\Phi u(x)&=\frac{1}{4\pi} \int_{\hat{\Gamma}} \frac{u(y)}{|x-y|} \, dy=\frac{1}{4\pi} \int_{\R_+ \times \hat{\gamma}}\frac{u(r'\omega')(r')^2 }{|r\omega-r'\omega'|} \, dS(\omega')dr'\\
	&=\frac{1}{4\pi} \int_{\R_+\times \hat{\gamma}}\frac{u(r'\omega')(r')^2 }{|\frac{r}{r'}\omega-\omega'|}\, dS(\omega')\frac{dr'}{r'},
	\end{align*}
	where $x=r\omega$, $y=r'\omega'$, $\omega, \omega' \in \hat{\gamma}$, and $dS$ is the surface measure on $\hat{\gamma}$.
	Consider $\Phi_0=M_{r^{-1}}\Phi M_{r^{-1}}$,
	\begin{equation*}
	\Phi_0 u(x)=\frac{1}{4\pi} \int_0^\infty \int_{\hat{\gamma}}\frac{u(r'\omega')}{\frac{r}{r'}|\frac{r}{r'}\omega-\omega'|}dS(\omega')\frac{dr'}{r'}, \qquad u\in C_c^\infty(\hat{\Gamma}).
	\end{equation*}
	Then $\Phi_0$ is a Mellin convolution operator with kernel
	\begin{equation} \label{eq:Phi0ismellinconv}
	\Phi_0(t,\omega,\omega')=\frac{1}{4\pi} \frac{1}{t|t\omega-\omega'|}.
	\end{equation}
	By this formula, $\MM\Phi_0(z) \colon C_c^\infty(\hat{\gamma}) \to L^2_{\loc}(\hat{\gamma})$ exists for $1 < \mre z < 2$, cf. \eqref{eq:PhiMellinasymp}, and
	$$(\MM \Phi_0 u)(z) = \MM\Phi_0(z) \MM u(z), \qquad u \in C_c^\infty(\hat{\Gamma}).$$
	
	On the other hand, the Laplacian in spherical coordinates is given by
	$$
	\Delta=\frac{1}{r^2}((r\partial_r)^2+r\partial_r+\Delta_{{\hat{\gamma}}}),
	$$
	where $\Delta_{\hat{\gamma}}$ denotes the restriction of the Laplace--Beltrami operator $\Delta_{{S^2}}$ to $\hat{\gamma}$,
    	$$\Delta_{\hat{\gamma}} f(\omega) = \Delta f(x/|x|)|_{x=\omega}, \qquad \omega \in S^2.$$
     \red{To understand the interaction between the Laplacian and the Mellin transform}, note, for $u \in C_c^\infty(\hat{\Gamma})$, that
\begin{equation} \label{eq:Minteract1}	
 \MM(r\partial_r u)(z) = -z \MM u(z),
 \end{equation}
	\red{and that the Mellin transform and the Laplace-Beltrami operator commute},
\begin{equation}\label{eq:Minteract2}	
 \MM(\Delta_{\hat{\gamma}} u)(z)  = \Delta_{\hat{\gamma}} \MM u(z).
 \end{equation}
	\begin{proposition}[\cite{QN}] \label{prop:fundsol}
		For $1 < \mre z < 2$, $\MM \Phi_0(z)$ is the fundamental solution for $-\Delta_{\hat{\gamma}} + 1/4 -(z-3/2)^2$, restricted to $\hat{\gamma}$.
	\end{proposition}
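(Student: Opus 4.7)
The strategy is to transport the identity $-\Delta \Phi u = u$ (valid for $u \in C^\infty_c(\hat\Gamma)$ since $\Phi$ is the fundamental solution of $-\Delta$ on $\R^3$) through the conjugation $\Phi_0 = M_{r^{-1}} \Phi M_{r^{-1}}$ and the radial Mellin transform. As a preliminary step I would confirm, by specializing the asymptotic \eqref{eq:PhiMellinasymp} with $a = \omega \cdot \omega'$, that the integral
$$\MM\Phi_0(z)(\omega,\omega') = \frac{1}{4\pi}\int_0^\infty \frac{t^{z-1}}{(t^2-2t\,\omega\cdot\omega'+1)^{1/2}}\,\frac{dt}{t}$$
converges absolutely precisely in the strip $1 < \mre z < 2$ (the integrand contributes $t^{z-2}\,dt$ at $t=0$ and $t^{z-3}\,dt$ at $t=\infty$), and defines a well-behaved kernel on $\hat\gamma \times \hat\gamma$.

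Next I would write $-\Delta$ in spherical coordinates and multiply $-\Delta(\Phi u) = u$ by $r^2$ to obtain the pointwise identity
$$-\bigl((r\partial_r)^2 + r\partial_r + \Delta_{\hat\gamma}\bigr)(\Phi u) = r^2 u.$$
Applying the radial Mellin transform and the commutation rules \eqref{eq:Minteract1}--\eqref{eq:Minteract2} yields
$$(-\Delta_{\hat\gamma} - z^2 + z)\,\MM(\Phi u)(z) = \MM u(z+2).$$
Since $\Phi = M_r \Phi_0 M_r$, the elementary rules $\MM(M_r f)(z) = \MM f(z+1)$ and $\MM(\Phi_0 g)(z) = \MM\Phi_0(z)\MM g(z)$ give $\MM(\Phi u)(z) = \MM\Phi_0(z+1)\,\MM u(z+2)$; substituting this and letting $u$ range over a dense subclass of $C^\infty_c(\hat\Gamma)$ (so that $\MM u(z+2)$ spans a dense subclass of test functions on $\hat\gamma$) produces
$$(-\Delta_{\hat\gamma} - z^2 + z)\,\MM\Phi_0(z+1) = I$$
as Schwartz kernels on $\hat\gamma$. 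After the substitution $w = z+1$, the algebraic identity $-(w-1)^2 + (w-1) = 1/4 - (w-3/2)^2$ converts the operator to $-\Delta_{\hat\gamma} + 1/4 - (w-3/2)^2$, which is the claim.

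The main obstacle is the technical justification of applying the Mellin transform termwise to $\Phi u$, since $\Phi u$ is not compactly supported in $r$. Concretely, one must show that the boundary terms in the integration-by-parts that underlies \eqref{eq:Minteract1} vanish for $\Phi u$. This follows from the behaviour of $\Phi u(r\omega)$: as $r \to \infty$ it decays like $r^{-1}$, and as $r \to 0$ it is bounded (because $u$ is supported away from the origin in $\hat\Gamma$), so the boundary terms $[r^z \, r\partial_r(\Phi u)]_0^\infty$ vanish precisely within the strip $1 < \mre z < 2$ — the same strip in which $\MM\Phi_0(z)$ converges. Once this is handled, the explicit symmetric kernel formula \eqref{eq:Phi0ismellinconv} identifies the right-inverse kernel $\MM\Phi_0(w)$ as the fundamental solution of $-\Delta_{\hat\gamma} + 1/4 - (w-3/2)^2$ restricted to $\hat\gamma$.
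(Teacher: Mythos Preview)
Your argument is correct and is essentially the same computation as the paper's, differing only in bookkeeping: the paper conjugates first, writing $u = -M_r\Delta M_r(\Phi_0 u)$ with $M_r\Delta M_r = (r\partial_r)^2 + 3r\partial_r + 2 + \Delta_{\hat\gamma}$ and then takes the Mellin transform of $\Phi_0 u$ directly at $z$, whereas you multiply by $r^2$, Mellin-transform $\Phi u$, and then shift $z \mapsto z+1$; the resulting polynomial identity $-z^2+3z-2 = 1/4-(z-3/2)^2$ is of course the same. One small point you omit is uniqueness: the paper notes that for $1<\mre z<2$ the potential $1/4-(z-3/2)^2$ has positive real part, so $-\Delta_{\hat\gamma}+1/4-(z-3/2)^2$ has trivial kernel on $S^2$ and the fundamental solution is \emph{the} fundamental solution, not merely \emph{a} right inverse.
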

	\begin{proof}
		Since $-\Delta \Phi u=u$ for any $u \in C_c^\infty(\hat{\Gamma})$, we have that 
		$$
		u=-M_r\Delta\Phi M_{r^{-1}} u = -M_r\Delta M_r\Phi_0 u.
		$$
		Noting that $M_r\Delta M_r=(r\partial_r)^2+3 r\partial_r + 2 + \Delta_{\hat{\gamma}}$, we find, \red{applying \eqref{eq:Minteract1} and \eqref{eq:Minteract2}},that
		\begin{equation*}
		\mathcal{M}u(z)= -(z^2 - 3z + 2 + \Delta_{\hat{\gamma}})(\MM \Phi_0 u)(z) = (-\Delta_{\hat{\gamma}} + 1/4 -(z-3/2)^2 )(\MM \Phi_0 u)(z).
		\end{equation*}
		Since $(\MM \Phi_0 u)(z) = \MM\Phi_0(z) \MM u(z)$ it follows that (the kernel of) $\MM \Phi_0(z)$ is the fundamental solution for $-\Delta_{\hat{\gamma}} + 1/4 -(z-3/2)^2$. This fundamental solution is unique, by the positive definiteness of $-\Delta_{\hat{\gamma}}$ and the fact that $\mre \, (1/4 -(z-3/2)^2) > 0$.
	\end{proof}
	\red{We now recall one of the main results of \cite{QN}. See also \cite[Lemma~3.2]{Rat92}.}
	\begin{theorem}[\cite{QN}]
		For $-1/2 < \mre z < 1/2$, in terms of kernels, we have on $\gamma$ that
		$$
		\MM K(z+1/2)=-\partial_{n_{\omega'}}(\MM \Phi_0)(z+3/2).
		$$
		That is, $\MM K(z+1/2)$ is the direct value on $\gamma$ of the double layer potential operator of $-\Delta_{\hat{\gamma}} + 1/4 - z^2$.
	\end{theorem}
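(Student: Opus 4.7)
The plan is to derive the kernel identity directly from the explicit formulas \eqref{eq:Kismellinconv} and \eqref{eq:Phi0ismellinconv}, and then read off the interpretation from Proposition~\ref{prop:fundsol}.

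The first step is a purely algebraic manipulation of the convolution kernels. Since $\omega' \in \gamma$ lies on a face $F_j$ of the cone $\Gamma$, which passes through the origin and has outward unit normal $n_{\omega'}$, we have $\omega' \cdot n_{\omega'} = 0$. Combining this with $\nabla_{\omega'} |t\omega - \omega'|^{-1} = (t\omega - \omega')|t\omega - \omega'|^{-3}$ gives
$$\partial_{n_{\omega'}} \frac{1}{|t\omega - \omega'|} = \frac{(t\omega - \omega')\cdot n_{\omega'}}{|t\omega-\omega'|^3} = \frac{t\omega \cdot n_{\omega'}}{|t\omega-\omega'|^3},$$
so that \eqref{eq:Kismellinconv} and \eqref{eq:Phi0ismellinconv} can be rewritten as
$$K(t,\omega,\omega') = -t\,\partial_{n_{\omega'}} \Phi_0(t,\omega,\omega'),$$
pointwise for $\omega \neq \omega'$. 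Taking the Mellin transform of both sides and using the elementary shift $\MM(tf)(z) = \MM f(z+1)$, together with exchanging $\partial_{n_{\omega'}}$ with the $t$-integral (justified off the diagonal by the decay recorded in \eqref{eq:PhiMellinasymp} and dominated convergence, for $-1/2 < \mre z < 1/2$), yields
$$\MM K(z+1/2)(\omega, \omega') = -\MM\bigl(t\,\partial_{n_{\omega'}}\Phi_0\bigr)(z+1/2)(\omega,\omega') = -\partial_{n_{\omega'}}\MM\Phi_0(z+3/2)(\omega,\omega'),$$
which is the claimed identity at the level of kernels.

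For the interpretation, Proposition~\ref{prop:fundsol} applied at $z + 3/2$ (which lies in the strip $1 < \mre(z+3/2) < 2$ whenever $-1/2 < \mre z < 1/2$) identifies $\MM \Phi_0(z+3/2)$ as the fundamental solution of $-\Delta_{\hat\gamma} + 1/4 - z^2$ on $\hat\gamma$. I then verify that the Euclidean normal $n_{\omega'}$ to a face of $\Gamma$ at $\omega' \in \gamma$ coincides with the outward conormal to $\gamma$ inside $S^2$: both vectors are perpendicular to $\omega'$ (hence tangent to $S^2$ at $\omega'$) and perpendicular to the tangent direction of the great-circle arc $\gamma_j = F_j \cap S^2$. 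Consequently, $-\partial_{n_{\omega'}}\MM\Phi_0(z+3/2)$, restricted to $\gamma \times \gamma$, is the standard double layer kernel on $\gamma$ associated to the Schrödinger-type operator $-\Delta_{\hat\gamma} + 1/4 - z^2$.

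The main obstacle is the careful bookkeeping for the boundary behavior: while the kernel-level identity is essentially a one-line computation once the observation $\omega' \cdot n_{\omega'} = 0$ is in hand, one must verify that the singular behavior of $-\partial_{n_{\omega'}}\MM\Phi_0(z+3/2)$ along the diagonal $\omega = \omega' \in \gamma$ agrees precisely with the principal-value (direct-value) convention used to define $\MM K(z+1/2)$ as an operator on $\gamma$. This is ultimately a local computation near the diagonal within each smooth arc $\gamma_j$, mirroring the standard theory of layer potentials for second-order elliptic operators in a smooth setting.
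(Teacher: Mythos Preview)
Your proof is correct and follows essentially the same route as the paper's: both arguments amount to writing out the kernels from \eqref{eq:Kismellinconv} and \eqref{eq:Phi0ismellinconv}, observing that $\partial_{n_{\omega'}}|t\omega-\omega'|^{-1} = t\omega\cdot n_{\omega'}\,|t\omega-\omega'|^{-3}$ (your explicit use of $\omega'\cdot n_{\omega'}=0$ is the content of the paper's phrase ``direct calculations''), and then matching the resulting Mellin integrals via a $t^{z+1/2}\cdot t = t^{z+3/2}$ shift before invoking Proposition~\ref{prop:fundsol}. Your added remarks on the conormal identification and the diagonal behaviour are reasonable clarifications but are not needed for the paper's concise argument, which simply records the kernel identity.
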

	
	\begin{proof}
		For $-3/2 < \mre z < 3/2$, \red{by \eqref{eq:Kismellinconv}}, the kernel of $\MM K(z+1/2)$ is given by
		$$
		\MM K(z+1/2)(\omega,\omega')=-\frac{1}{4\pi}  \int_0^\infty t^{z+1/2}\frac{t\omega \cdot n_{\omega'}}{|t \omega-\omega'|^3} \, \frac{dt}{t},
		$$
		where $\omega, \omega' \in \gamma$, $\omega \neq \omega'$. Furthermore, for $-1/2 < \mre z < 1/2$, \red{by \eqref{eq:Phi0ismellinconv} and direct calculations}, the normal derivative of the kernel of $(\MM \Phi_0)(z+3/2)$ is given by
		\begin{align*}
		-\partial_{n_{\omega'}}(\MM \Phi_0)(z+3/2)(\omega,\omega')&=-\int_0^\infty t^{z+3/2}\partial_{n_{\omega'}}\Phi_0(t,\omega,\omega')\,\frac{dt}{t}\\&=-\frac{1}{4\pi}\int_0^\infty t^{z+3/2}\frac{\omega \cdot n_{\omega'}}{|t\omega-\omega'|^3}\, \frac{dt}{t}.
		\end{align*}
		The result now follows from Proposition~\ref{prop:fundsol}.
	\end{proof}

\subsection{The adjoint of $\MM K(z)$ and the Kellogg argument on $\gamma$}
Let $H(z) = \MM K(z+1/2)$, $-3/2 < \mre z < 3/2$, and let $0 \leq \alpha < 1$. Recall from Section~\ref{subsec:mellinconv} that 
$K \colon L^2_\alpha(\Gamma) \to L^2_\alpha(\Gamma)$ is unitarily equivalent to 
\begin{equation} \label{eq:Ktensor}
I \otimes H(z) \colon L^2(\mre z =0)\otimes L^2_\alpha(\gamma) \to  L^2(\mre z =0)\otimes L^2_\alpha(\gamma).
\end{equation}
For $\mre z = 0$, we will in this section combine the description of $H(z)$ as the double layer potential operator of $-\Delta_{\hat{\gamma}} + 1/4 - z^2$ with a variant of an argument due to O.D. Kellogg, in order to show that every eigenvalue of $H^\ast(z) \colon L^2_{-\alpha}(\gamma) \to L^2_{-\alpha}$ is real.  The success of the Kellogg argument relies on the fact that the potential $1/4 - z^2 > 0$ for $\mre z = 0$.

In this discussion, $H^\ast(z) = (H(z))^\ast$ denotes the adjoint of $H(z)$ with respect to the $L^2(\gamma)$-pairing. For $f \in L^2_\alpha(\gamma)$ and $g \in L^2_{-\alpha}(\gamma)$, we see through the involution $t \mapsto 1/t$ that
\begin{align*}
    \langle H(z)f,g\rangle_{L^2(\gamma)}
    &=-\frac{1}{4\pi}\int_\gamma \int_\gamma \int_0^\infty t^{z+1/2}\frac{t\omega \cdot n_{\omega'}}{|t \omega-\omega'|^3} \, \frac{dt}{t}f(\omega') \, d\omega' \overline{g(\omega)} \, {d\omega}\\
    &=-\frac{1}{4\pi}\int_\gamma \int_\gamma \int_0^\infty t^{-z+1/2}\frac{t\omega \cdot n_{\omega'}}{| \omega-t\omega'|^3 } \, \frac{d t}{t}
\overline{g(\omega)} \, {d\omega} f(\omega') \, d\omega' \\
&= \langle f, \MM K^\dagger(-\overline{z}+1/2)g\rangle_{L^2(\gamma)},
\end{align*}
where $K^\dagger$ denotes the adjoint of $K$ with respect to the $L^2_0(\Gamma)$-pairing, which has operator-valued convolution kernel
$$K^\dagger(t, \omega, \omega') = - \frac{1}{4\pi t}\frac{(1/t)\omega' \cdot n_\omega}{|(1/t)\omega' - \omega|^3} = - \frac{1}{4\pi} \frac{t \omega' \cdot n_\omega}{|\omega' - t\omega|^3}.$$
Therefore,
$$H^\ast(z) = \MM K^\dagger(1/2 - \bar{z}), \qquad -3/2 < \mre z < 3/2.$$
Applying the argument of Section~\ref{subsec:mellinconv} again, we conclude the following.
\begin{lemma} \label{lem:Kdagger}
For $0 \leq \alpha < 1$, $K^\dagger \colon L^2_{-\alpha}(\Gamma) \to L^2_{-\alpha}(\Gamma)$ is unitarily equivalent to
$$I \otimes H^\ast(z) \colon L^2(\mre z =0)\otimes L^2_{-\alpha}(\gamma) \to  L^2(\mre z =0)\otimes L^2_{-\alpha}(\gamma).$$
\end{lemma}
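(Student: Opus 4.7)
The plan is to invoke the general Mellin framework of Section~\ref{subsec:mellinconv}, applied now to $K^\dagger$ in place of $K$. The explicit formula
$$K^\dagger(t, \omega, \omega') = -\frac{1}{4\pi} \frac{t\omega' \cdot n_\omega}{|\omega' - t\omega|^3}$$
derived in the calculation immediately preceding the lemma shows that $K^\dagger$ is itself a Mellin convolution operator with an operator-valued convolution kernel. Since $-\alpha$ still satisfies $-\alpha < 1$ for $0 \le \alpha < 1$, the unitary isomorphism \eqref{eq:mellinshifted} and the surrounding discussion are available with $\alpha$ replaced by $-\alpha$.

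Accepting that $K^\dagger$ extends boundedly to $L^2_{-\alpha}(\Gamma)$, this framework gives that $K^\dagger$ is, via the unitary $\MM M_{r^{1/2}}$, unitarily equivalent to multiplication by the fiber operator $\MM K^\dagger(z + 1/2)$ on $L^2(\mre z = 0) \otimes L^2_{-\alpha}(\gamma)$. The second step is to identify this fiber with $H^\ast(z)$ on the critical line. The identity
$$H^\ast(z) = \MM K^\dagger(1/2 - \bar z), \qquad -3/2 < \mre z < 3/2,$$
was just verified through the involution $t \mapsto 1/t$ in the Mellin integral. Restricting to $\mre z = 0$, so that $\bar z = -z$, this identity becomes $H^\ast(z) = \MM K^\dagger(z + 1/2)$, which is exactly the fiber in question. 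Substitution then yields the claimed unitary equivalence with $I \otimes H^\ast(z)$.

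The only point that requires any care is verifying that $K^\dagger$ actually extends to a bounded operator on $L^2_{-\alpha}(\Gamma)$, so that Lemma~\ref{lem:tensorbdd} can be invoked. I expect this to be the main (though still mild) obstacle, and I would dispatch it by noting that the uniform bound $\sup_{\xi \in \R} \|H^\ast(i\xi)\|_{L^2_{-\alpha}(\gamma) \to L^2_{-\alpha}(\gamma)} < \infty$ is the $L^2(\gamma)$-dual of the corresponding uniform bound for $H(i\xi)$ on $L^2_\alpha(\gamma)$, which is already in hand in order to make sense of $K$ on $L^2_\alpha(\Gamma)$. Everything else is a verbatim transfer from the $K$-case to the $K^\dagger$-case.
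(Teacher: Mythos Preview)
Your proposal is correct and follows essentially the same approach as the paper: apply the Mellin framework of Section~\ref{subsec:mellinconv} to $K^\dagger$, then identify the fiber $\MM K^\dagger(z+1/2)$ with $H^\ast(z)$ on $\mre z = 0$ via the formula $H^\ast(z) = \MM K^\dagger(1/2 - \bar z)$. The paper also remarks that one could alternatively obtain the lemma by taking adjoints directly in \eqref{eq:Ktensor}, but your route is the one actually used; your duality argument for the uniform boundedness of $H^\ast(i\xi)$ on $L^2_{-\alpha}(\gamma)$ is a small point the paper leaves implicit.
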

One could of course have arrived at this lemma directly by taking the adjoint in \eqref{eq:Ktensor}, but the preceding calculations are instructive for later arguments.

We now give the promised Kellogg argument, which relies on having access to a fairly complete layer potential theory of $-\Delta_{\hat{\gamma}}+1/4 - z^2$ on the Lipschitz domain $\hat{\gamma} \subset S^2$. For this purpose, we refer to the (much more general) theory of boundary layer potentials for Lipschitz domains in Riemannian manifolds \cite{MT99, MT01}, developed by M. Mitrea and M. Taylor. 
	\begin{lemma}\label{eigenvH*}
		Suppose that $0 \leq \alpha < 1$, $\mre z = 0$, and $\lambda \not\in (-1/2,1/2)$. Then the operator $\lambda I-H^\ast(z):L^2_{-\alpha}(\gamma)\longrightarrow L^2_{-\alpha}(\gamma)$ is injective.
	\end{lemma}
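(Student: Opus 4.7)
The plan is to adapt the classical Kellogg argument, exploiting the positivity of the potential $\tfrac{1}{4} - z^2 = \tfrac{1}{4} + \xi^2 > 0$ when $z = i\xi$ with $\xi \in \R$. Suppose $g \in L^2_{-\alpha}(\gamma)$ satisfies $H^\ast(z) g = \lambda g$. Define $U$ on $S^2$ as the single layer potential of $g$ against the fundamental solution $\MM \Phi_0(z+3/2)$ of $-\Delta_{S^2} + \tfrac{1}{4} + \xi^2$, whose kernel is identified in Proposition~\ref{prop:fundsol}. By the Mitrea--Taylor theory of boundary layer potentials on Lipschitz domains in Riemannian manifolds \cite{MT99, MT01}, which applies after observing that the embedding $L^2_{-\alpha}(\gamma) \hookrightarrow L^p(\gamma)$ for some $p > 1$ follows from H\"older's inequality and the assumption $\alpha < 1$, the function $U$ is locally in $H^1(S^2)$, its trace across $\gamma$ is continuous (so $U_+ = U_-$), and the nontangential normal derivatives satisfy the jump relations
\begin{equation*}
(\partial_\nu U)_\pm = \bigl(\mp \tfrac{1}{2} + H^\ast(z)\bigr) g = \bigl(\mp \tfrac{1}{2} + \lambda\bigr) g \quad \text{on } \gamma,
\end{equation*}
where $\nu$ is the unit outward normal from $\hat{\gamma}$.

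Apply Green's identity in the bounded Lipschitz domains $\hat{\gamma}$ and $S^2 \setminus \overline{\hat{\gamma}}$. Using $(-\Delta_{S^2} + \tfrac{1}{4} + \xi^2)U = 0$ on each side and the jump relations above, we obtain
\begin{equation*}
\int_{\hat{\gamma}}\bigl(|\nabla U|^2 + (\tfrac{1}{4} + \xi^2)|U|^2\bigr)\,dS = \bigl(\lambda + \tfrac{1}{2}\bigr) \int_\gamma g \,\overline{U} \, d\omega,
\end{equation*}
\begin{equation*}
\int_{S^2 \setminus \overline{\hat{\gamma}}}\bigl(|\nabla U|^2 + (\tfrac{1}{4} + \xi^2)|U|^2\bigr)\,dS = \bigl(\tfrac{1}{2} - \lambda\bigr) \int_\gamma g \,\overline{U} \, d\omega.
\end{equation*}
Since the left-hand sides are real and nonnegative, and since their sum equals $\int_\gamma g \overline{U} \, d\omega$, the pairing $\int_\gamma g \overline{U} \, d\omega$ is real and nonnegative. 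If it is strictly positive, then $\lambda + \tfrac{1}{2} \geq 0$ and $\tfrac{1}{2} - \lambda \geq 0$, so $\lambda \in [-\tfrac{1}{2}, \tfrac{1}{2}]$; at the endpoints $\lambda = \pm \tfrac{1}{2}$ one of the two integrals vanishes, so $U \equiv 0$ on one side of $\gamma$, and then continuity of the trace together with unique solvability of the Dirichlet problem for $-\Delta_{S^2} + \tfrac{1}{4} + \xi^2$ on the other side (positivity again) force $U \equiv 0$ globally. If the pairing equals zero, the same conclusion $U \equiv 0$ follows directly. Either way, the jump relation $(\partial_\nu U)_- - (\partial_\nu U)_+ = g$ yields $g = 0$. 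Hence $\lambda \notin (-\tfrac{1}{2}, \tfrac{1}{2})$ forces $g = 0$, so $\lambda I - H^\ast(z)$ is injective.

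The main obstacle is the rigorous justification of the layer potential calculus for $g$ in the weighted space $L^2_{-\alpha}(\gamma)$, since the Mitrea--Taylor theory is most naturally formulated in unweighted $L^p$-scales. The embedding into $L^p(\gamma)$ for some $p > 1$ brings the problem within the scope of that theory, and one must then verify that the resulting nontangential maximal estimates, traces, and jump relations are strong enough to invoke Green's identity on both sides of $\gamma$. Once this foundation is in place, the algebraic Kellogg reasoning above is routine.
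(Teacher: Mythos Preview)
Your proposal is correct and follows essentially the same Kellogg-type argument as the paper: embed $L^2_{-\alpha}(\gamma)$ into $L^p(\gamma)$ via H\"older, form the single layer potential for $-\Delta_{S^2}+\tfrac14+\xi^2$, apply the jump relations and Green's formula on each side of $\gamma$, and conclude from positivity of the energies that $\lambda\in(-1/2,1/2)$. The only notable difference is in the treatment of the degenerate case where one energy vanishes: you argue via trace continuity and Dirichlet uniqueness for $-\Delta_{S^2}+V$ to force $U\equiv 0$ and recover $g=0$ from the jump, whereas the paper instead invokes injectivity of the single layer operator $\tilde{\Sg}(z)$ on $L^p(\gamma)$ directly. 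Both routes are valid. The paper is also more explicit about the passage from smooth $g$ to $g\in L^2_{-\alpha}(\gamma)$ in Green's formula, establishing the required continuity of $g\mapsto I_\pm(g)$ via the mapping properties $\tilde{\Sg}(z)\colon L^p(\gamma)\to W^{1,p}(\gamma)$ and a Fatou argument; you correctly flag this justification as the main technical obstacle but leave the details implicit.
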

	\begin{proof}
		Suppose that $0\neq f\in L^2_{-\alpha}(\gamma)$ satisfies $(\lambda I-H^\ast(z))f=0$ for some $\lambda \in \C$. Let $V=1/4 - z^2 > 0$, and let $\tilde{\Sg}(z)$ denote the single layer potential on $\gamma$ for $-\Delta_{\hat{\gamma}} + V$. By Proposition~\ref{prop:fundsol}, for suitable functions $g$ on $\gamma$,
		$$\tilde{\Sg}(z)g(\omega) = \int_{\gamma} \left[\MM \Phi_0(z+3/2)\right](\omega, \omega') g(\omega') \, d\omega', \qquad \omega \in S^2.$$
By H\"older's inequality, $L^2_{-\alpha}(\gamma)$ is continuously contained in $L^p(\gamma) = L^p(\gamma, d\omega)$ for every $1 < p<\frac{2}{\alpha +1}$. Therefore, by the results of \cite{MT99}, $H^\ast(z) \colon L^2_{-\alpha}(\gamma) \to L^p(\gamma)$ is bounded, and, for $g \in L^2_{-\alpha}(\gamma)$,
		\begin{equation*}
		\left(\frac{\partial}{\partial n} \tilde{\Sg}(z) g\right)_{\pm}(\omega) =\left(\pm \frac{1}{2} I - H^\ast(z)\right) g(\omega),  \qquad \textrm{a.e. } \omega \in \gamma.
		\end{equation*}
		Here $\left(\frac{\partial}{\partial n} \tilde{\Sg}(z) g\right)_{\pm}$ denotes the $\partial_{n_{\omega}}$-derivative of $\tilde{\Sg}(z)g$,  defined in terms of  non-tangential limits from inside $\hat{\gamma}$ or from its exterior $\hat{\gamma}_- = S^2 \setminus \overline{\hat{\gamma}}$ (corresponding to the $\pm$ in the notation). For any $g \in {C^\infty_{\arc}(\gamma)}$, we have that $(-\Delta_{\hat{\gamma}} + V)\tilde{\Sg}(z) g(\omega)=0$, $\omega\not\in \gamma$, and applying Green's formula, justified with maximal function estimates as in \cite[Proposition~4.1]{MT99}, yields that
		$$
		I_+(g) :=\int_{\hat{\gamma}} |\nabla \tilde{\Sg}(z) g(\omega)|^2 + V |\tilde{\Sg}(z) g(\omega)|^2 \, dS(\omega)=\int_\gamma \overline{\tilde{\Sg}(z) g(\omega)}\left(\ \frac{\partial}{\partial n} \tilde{\Sg}(z) g \right)_+(\omega) \, d \omega
		$$
		and
		$$
		I_-(g) :=\int_{\hat{\gamma}_-} |\nabla \tilde{\Sg}(z) g(\omega)|^2 + V |\tilde{\Sg}(z) g(\omega)|^2 \, dS(\omega) =-\int_\gamma \overline{\tilde{\Sg}(z) g(\omega)}\left( \frac{\partial}{\partial n} \tilde{\Sg}(z) g \right)_-(\omega) \, d\omega,
		$$
		where, in this proof only, $\nabla$ denotes the gradient of $S^2$. Note here that $\tilde{\Sg}(z)$ maps $L^p(\gamma)$ into the Sobolev space $W^{1,p}(\gamma)$ \cite[Eq. (7.49)]{MT99}, and thus in particular that $\tilde{\Sg}(z)$ is bounded as a map $\tilde{\Sg}(z) \colon L^2_{-\alpha}(\gamma) \to L^q(\gamma)$, $\frac{1}{p} + \frac{1}{q} = 1$. Hence the energies $I_\pm(g)$ depend continuously on $g \in L^2_{-\alpha}(\gamma)$. By Fatou's lemma, it thus also follows that $L^2_{-\alpha}(\gamma) \ni g \mapsto \tilde{\Sg}(z) g \in L^2(S^2, dS(\omega))$ and 
		$$L^2_{-\alpha}(\gamma) \ni g \mapsto \nabla \tilde{\Sg}(z) g \in L^2(S^2, dS(\omega)) \oplus L^2(S^2, dS(\omega))$$
		 are continuous. 
		
		Therefore the equations
		\begin{equation*}
		I_\pm(g)= \pm \int_\gamma \overline{\tilde{\Sg}(z) g(\omega)}\left( \pm\frac{1}{2} I - H^\ast(z)\right) g(\omega) \, d\omega
		\end{equation*}
		remain valid for general $g \in L^2_{-\alpha}(\gamma)$, and in particular for $g=f$. Moreover, since $f$ is an eigenfunction of $H^\ast(z)$, we have
		\begin{equation*}
		I_\pm(f)= \left(\frac{1}{2} \mp \lambda \right)\int_{\gamma} \overline{\tilde{\Sg}(z) f(\omega)}f(\omega) \, d\omega,
		\end{equation*}
		and thus
		\begin{equation} \label{eq:kelloggen}
		2\lambda(I_+(f) + I_-(f))= I_-(f)-I_+(f).
		\end{equation}
		
		Both of the energies $I_\pm(f)$ must be positive. For if not, we would have that $\tilde{S}(z)f \equiv 0$ in either $\hat{\gamma}$ or $\hat{\gamma}_-$, and therefore $\tilde{S}(z)f= 0$ on $\gamma$ by \cite[Proposition~3.8]{MT99}. This would yield that $f = 0$, since $\tilde{S}(z)$ is injective on $L^p(\gamma)$ \cite[Equation (1.20)]{MT01}. Therefore \eqref{eq:kelloggen} implies that $\lambda \in (-1/2,1/2)$.
	\end{proof}

\begin{remark} \label{rmk:energynorm} \rm
The proof in particular shows that for $\mre z = 0$ and $0 \neq g \in L^2(\gamma)$, 
$$\langle \tilde{S}(z) g, g \rangle_{L^2(\gamma)} = \int_{S^2} |\nabla \tilde{\Sg}(z) g(\omega)|^2 + V |\tilde{\Sg}(z) g(\omega)|^2 \, dS(\omega) > 0.$$
This is the basis for constructing the energy space $\mathcal{E}(\gamma, -\Delta_{\hat{\gamma}} + 1/4 - z^2) \simeq H^{-1/2}(\gamma)$ in Section~\ref{sec:energycone}. Furthermore, once constructed, the proof of Lemma~\ref{eigenvH*} shows that $L^2_{-\alpha}(\gamma)$ is continuously contained in $\mathcal{E}(\gamma, -\Delta_{\hat{\gamma}} + 1/4 - z^2)$, $0 \leq \alpha < 1$. Dualizing, this can be interpreted as a non-sharp fractional Hardy inequality:
$$\int_{\gamma} |f|^2 q^{-\alpha} \, d\omega \leq C_\alpha \|f\|_{H^{1/2}(\gamma)}^2, \qquad 0 \leq \alpha < 1.$$
\end{remark}
	
\section{Spectral theory on $L^2_\alpha(\partial \Omega)$} \label{sec:L2a}
\subsection{Analysis of $H(z)$}
Let $\Gamma$ be a polyhedral cone and let $K$ be its Neumann--Poincar\'e operator. Recall that we write $H(z) = \MM K(z+1/2)$, $-3/2<\mre z<3/2$, so that  
$$
H(z)v(\omega)= -\frac{1}{4\pi} \int_0^\infty \int_\gamma t^{z+1/2}\frac{t \omega \cdot n_{\omega'}}{|t \omega-\omega'|^3}v(\omega') \, d\omega'\frac{dt}{t}, \quad v\in C^\infty_{\arc}(\gamma), \;\omega\in\gamma\setminus\{E_1,\dots,E_J\}.
$$
Observe that $H(z)$ is pointwise well-defined, since $\omega \cdot n_{\omega'} =0$ whenever $\omega$ and $\omega'$ belong to the same arc $\gamma_j$. For each corner $E_j$ of $\gamma$, we choose a function $\varphi_j \in C^{\infty}_{\arc}(\gamma)$ such that $0\le \varphi_j \le 1$ on $\gamma$, $\varphi_j$ is supported in a small neighbourhood of $E_j$, and $\varphi_j \equiv 1$ close to $E_j$. We then introduce the decomposition $H(z)=H_0(z)+H_1(z)$, where 
\begin{equation} \label{eq:Hdecomp}
H_0(z)=\sum_{1\le j\le J} \varphi_j H(z)\varphi_j, \quad H_1(z)=H(z)-H_0(z).
\end{equation}
The starting point of this section is the following result of Elschner. We have extracted a slightly more precise statement than given in \cite[Theorem~2.1]{Elschner}, which follows from its proof. We denote the interior angle made by $\gamma$ at $E_j$ by $\beta_j$. 
	
	\begin{lemma}[\cite{Elschner}]\label{PropertiesH} Let $0\le \alpha<1$, $\epsilon>0$, and $\delta > 0$. Then \begin{itemize}
			\item[i)] The operator-valued map $z \mapsto H(z) \colon L^2_\alpha(\gamma) \to L^2_\alpha(\gamma)$ is analytic in the strip $-3/2<\mre z<3/2$. 
			\item[ii)] If the supports $\supp \varphi_j$ are chosen sufficiently small, $1 \leq j \leq J$, the decomposition \eqref{eq:Hdecomp} satisfies the following on the closed strip $-3/2+\delta\le\mre z\le3/2-\delta$: 
	 $$\|H_0(z)\|_{\LL(L^2_\alpha(\gamma))}\le \frac{1 + \varepsilon}{2}\max_{1\le j\le J}\left|\frac{\sin((\pi-\beta_j)(1-\alpha)/2)}{\sin(\pi(1-\alpha)/2)} \right|,
		$$
			  and $H_1(z) \colon L^2_\alpha(\gamma) \to L^2_\alpha(\gamma)$ is Hilbert--Schmidt with 
			  $$\lim_{ |\mim z| \to \infty} \|H_1(z)\|_{\mathcal{S}_2(L^2_\alpha(\gamma))} = 0,$$
			  \red{where $\mathcal{S}_2(L^2_\alpha(\gamma))$ denotes the Hilbert--Schmidt  norm on $L^2_\alpha(\gamma)$}.
		\end{itemize}
	\end{lemma}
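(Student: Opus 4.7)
The plan is to follow the route of Elschner, handling the two parts separately. Part (i) is straightforward from the explicit integral representation of the kernel; the heart of the lemma is part (ii), where near each corner $E_j$ the operator is shown to be well approximated by a 2D wedge Mellin convolution whose symbol can be computed in closed form, with a remainder that is either small in norm (the piece in $H_0$) or smooth (the piece in $H_1$).

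For part (i), write the kernel of $H(z)$, using $a=\omega\cdot\omega'$, as
$$H(z)(\omega,\omega')=-\frac{\omega\cdot n_{\omega'}}{4\pi}\int_0^\infty \frac{t^{z+1/2}}{(t^2-2at+1)^{3/2}}\,\frac{dt}{t}.$$
By the asymptotic \eqref{eq:KMellinasymp}, this integral converges uniformly on compact subsets of the strip $-3/2<\mre z<3/2$, so differentiation under the integral sign gives analyticity in $z$. Boundedness on $L^2_\alpha(\gamma)$ then follows from a Schur test, using the key cancellation $\omega\cdot n_{\omega'}=0$ whenever $\omega$ and $\omega'$ lie on the same arc (the arcs are great circles of $S^2$, and $n_{\omega'}$ is tangent to $S^2$ and perpendicular to the arc) in order to tame the diagonal singularity.

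For part (ii), I would localize to a small neighborhood of each corner $E_j$, parametrizing the two adjacent arcs by arc lengths $s_{j-1},s_j$. Zooming into $E_j$ replaces $S^2$ by its tangent plane and the two arcs by straight rays meeting at the interior angle $\beta_j$. Under this zoom, $\varphi_j H(z)\varphi_j$ limits to a model Mellin convolution $T_{\beta_j}(z)$, namely the 2D double layer potential on the resulting wedge. A direct calculation, exploiting that the diagonal (same-ray) contribution vanishes, yields the symbol
$$\tfrac12\,\frac{\sin\bigl((\pi-\beta_j)(1/2+i\xi)\bigr)}{\sin\bigl(\pi(1/2+i\xi)\bigr)},\qquad\xi\in\R,$$
in agreement with the curves $\Sigma_{0,\beta_j}$ from the introduction. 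The weight $q^{-\alpha}$ shifts the Mellin line of unitarity from $\mre z=1/2$ to $\mre z=(1-\alpha)/2$, and by Section~\ref{subsec:mellinconv} and Lemma~\ref{lem:tensorbdd},
$$\|T_{\beta_j}(z)\|_{\LL(L^2_\alpha(\gamma))}=\sup_{\xi\in\R}\Bigl|\tfrac12\,\frac{\sin((\pi-\beta_j)((1-\alpha)/2+i\xi))}{\sin(\pi((1-\alpha)/2+i\xi))}\Bigr|.$$
Since $\sin x/x$ is decreasing on $(0,\pi)$, a short computation verifies that the supremum is attained at $\xi=0$, giving the stated bound. The remainder $\varphi_j H(z)\varphi_j-T_{\beta_j}(z)$ is of strictly lower order in $s_{j-1}+s_j$; it can be absorbed into the factor $1+\varepsilon$ by choosing $\supp\varphi_j$ sufficiently small, uniformly in $z$ on the closed substrip $-3/2+\delta\le\mre z\le 3/2-\delta$.

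For $H_1(z)=H(z)-H_0(z)$, the integral kernel is supported at positive distance from all corners of $\gamma$. On this region the Mellin integral defining $H(z)(\omega,\omega')$ is smooth and bounded in $(\omega,\omega')$, lies in $L^2(\gamma\times\gamma)$, and is therefore Hilbert--Schmidt on $L^2_\alpha(\gamma)$. The decay $\|H_1(z)\|_{\mathcal{S}_2}\to 0$ as $|\mim z|\to\infty$ is an oscillatory integral phenomenon: integrating by parts $N$ times in $t$ against the oscillatory factor $t^{i\mim z}$ produces a pointwise kernel bound of order $(1+|\mim z|)^{-N}$ away from the corners, which integrates to the same decay of the Hilbert--Schmidt norm. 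The main obstacle in the whole argument is the identification of the leading-order model at each corner together with uniform control of the remainder: the Taylor expansion of $(t\omega\cdot n_{\omega'})/|t\omega-\omega'|^3$ in arc length parameters near $E_j$ must be shown to reproduce the planar wedge kernel to principal order, and the weight $q^{-\alpha}$ must be correctly tracked through the Mellin change of variables. This is precisely the calculation carried out in \cite[Section~2]{Elschner}.
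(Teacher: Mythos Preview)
Your outline is essentially the same strategy as Elschner's (which the paper cites without reproducing the proof, though the key calculations reappear in the proof of Lemma~\ref{spectH}). Two small corrections are worth making.

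First, the leading model at each corner is \emph{independent of $z$}: by the asymptotic \eqref{eq:KMellinasymp}, the $z$-dependence of the Mellin integral enters only in the $O(1+|\log(1-a)|)$ remainder, and the principal part $\tfrac{1}{4\pi}\tfrac{b}{1-a}$ is exactly (after the change of variable $\sigma=\tan(s/2)$) the planar Neumann--Poincar\'e kernel on the wedge of opening $\beta_j$. So your $T_{\beta_j}(z)$ should simply be $T_{\beta_j}$. The cleanest reason its Mellin symbol attains its maximum modulus on the line $\mre\zeta=(1-\alpha)/2$ at the real point is that the convolution kernel $Z_{\beta_j}(t)$ has a definite sign; this is how the paper argues just before Theorem~\ref{contessspecK}.

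Second, your description of $H_1(z)$ is inaccurate: its kernel is \emph{not} supported at positive distance from all corners. Writing $H_1(z)(\omega,\omega')=(1-\sum_j\varphi_j(\omega)\varphi_j(\omega'))H(z)(\omega,\omega')$, one sees it vanishes only in a neighborhood of each diagonal corner pair $(E_j,E_j)$; it is nonzero, for instance, when $\omega$ is near $E_1$ and $\omega'$ is near $E_2$. What is true, and sufficient, is that on the support of $H_1(z)$ either $\omega\cdot n_{\omega'}=0$ (same arc) or $a=\omega\cdot\omega'$ is bounded away from $1$, so the kernel is bounded and the weighted Hilbert--Schmidt integral $\int\int|H_1(z)(\omega,\omega')|^2 q(\omega)^{-\alpha}q(\omega')^{\alpha}\,d\omega\,d\omega'$ is finite for $\alpha<1$. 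The integration-by-parts argument for the decay in $|\mim z|$ is then exactly as you describe.
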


Next we will describe the spectrum of $H(z) \colon L^2_\alpha(\gamma) \to L^2_\alpha(\gamma)$. For $1 \leq j \leq J$, let
$$ \Sigma_{\alpha, \beta_j} = \left \{\frac{1}{2}\frac{\sin((\pi-\beta_j)(\frac{1-\alpha}{2}+i\xi))}{\sin(\pi(\frac{1-\alpha}{2}+i\xi))} \, : \, -\infty \leq \xi \leq \infty \right\}.$$
This is a simple closed curve in $\mathbb{C}$ with $0 \in \Sigma_{\alpha, \beta_j}$, described in detail in \cite[Lemma~12]{Per21}, \red{see some examples in Figure \ref{sigmapi2}.} Let $\widehat{\Sigma}_{\alpha, \beta_j}$ denote $\Sigma_{\alpha, \beta_j}$ together with its interior, and let $\widehat{\Sigma}_{\alpha, \beta_j}^{-} = -\widehat{\Sigma}_{\alpha, \beta_j}$ denote its reflection in the imaginary axis.
	\begin{lemma}\label{spectH}			
	Let $0\le \alpha<1$ and $-3/2<\mre z<3/2$. Then
	$$\sigma \left(H(z), L^2_\alpha(\gamma)\right)=\bigcup_{{1\le j\le J   }}(\widehat{\Sigma}_{\alpha, \beta_j} \cup \widehat{ \Sigma}_{\alpha, \beta_j}^-)\cup \Lambda_{\gamma, z}^\alpha,$$
	where $\Lambda_{\gamma, z}^\alpha$ is a countable set of isolated eigenvalues in the complement of $\cup_j (\widehat{\Sigma}_{\alpha, \beta_j} \cup \widehat{ \Sigma}_{\alpha, \beta_j}^-)$.	Moreover, each isolated eigenvalue $\lambda_z \in \Lambda_{\gamma, z}^\alpha$ depends continuously on $z$, and if $\mre z=0$, then $\Lambda_{\gamma, z}^\alpha \subset (-1/2,1/2)$. 
	\end{lemma}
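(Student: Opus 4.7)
The plan is to pin down the essential spectrum via a corner-by-corner Mellin symbol calculation, fill in the remainder by an index computation, and then apply the Kellogg-type bound of Lemma~\ref{eigenvH*} to constrain the isolated eigenvalues when $\mre z = 0$.

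First, I would invoke the decomposition $H(z) = H_0(z) + H_1(z)$ from Lemma~\ref{PropertiesH}, where $H_1(z)$ is compact on $L^2_\alpha(\gamma)$, so that $\sigma_{\ess}(H(z)) = \sigma_{\ess}(H_0(z))$. After shrinking the cut-offs $\varphi_j$ if necessary so that their supports are pairwise disjoint, $H_0(z) = \sum_j \varphi_j H(z)\varphi_j$ splits as a direct sum of localized corner pieces modulo compacts. Near each corner $E_j$, parametrizing the two adjacent arcs by arc length and passing to the Mellin transform on the line $\mre w = (1-\alpha)/2$ expresses $\varphi_j H(z)\varphi_j$, modulo compacts, as a $2\times 2$ Mellin convolution operator on the half-line. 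The key observation is that the leading singular part of the kernel of $H(z)$ at $E_j$ is dictated only by the flat geometry at the corner, not by the potential $1/4 - z^2$ or the curvature of $\gamma$; those only produce lower order, Hilbert--Schmidt remainders. The symbol computation therefore reduces to that for the planar Neumann--Poincar\'e operator on a wedge of opening $\beta_j$, giving symbol eigenvalues
$$\pm \tfrac{1}{2}\,\frac{\sin\!\left((\pi-\beta_j)(\tfrac{1-\alpha}{2}+i\xi)\right)}{\sin\!\left(\pi(\tfrac{1-\alpha}{2}+i\xi)\right)},$$
which parametrize exactly $\Sigma_{\alpha,\beta_j}$ and $\Sigma_{\alpha,\beta_j}^-$. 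Summing over $j$ yields $\sigma_{\ess}(H(z), L^2_\alpha(\gamma)) = \bigcup_{1\leq j\leq J}(\Sigma_{\alpha,\beta_j} \cup \Sigma_{\alpha,\beta_j}^-)$.

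Next, the Mellin pseudo-differential index formula recalled in Section~\ref{subsec:mellinpdo} says that on each component of the complement of $\sigma_{\ess}(H(z))$, the Fredholm index of $H(z) - \lambda$ is the total winding number of $\lambda$ about the local symbol curves. Interior points of the $\widehat{\Sigma}_{\alpha,\beta_j}$ and $\widehat{\Sigma}_{\alpha,\beta_j}^-$ have non-zero winding number and thus lie in $\sigma(H(z), L^2_\alpha(\gamma))$, while on the unbounded component the index vanishes and non-invertibility is equivalent to a non-trivial kernel. Since $\lambda\mapsto H(z)-\lambda$ is an analytic family of Fredholm operators that is invertible for $|\lambda|$ large, the analytic Fredholm theorem delivers the countable discrete set $\Lambda_{\gamma, z}^\alpha$ of isolated eigenvalues of finite multiplicity. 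Continuity of each $\lambda_z \in \Lambda_{\gamma, z}^\alpha$ in $z$ then follows from the analyticity of $z\mapsto H(z)$ established in Lemma~\ref{PropertiesH}(i), via standard analytic perturbation theory for isolated eigenvalues of finite type.

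Finally, suppose $\mre z = 0$ and $\lambda \in \Lambda_{\gamma, z}^\alpha$. Since $H(z) - \lambda$ is Fredholm of index zero and not invertible on $L^2_\alpha(\gamma)$, its $L^2(\gamma)$-adjoint $H^\ast(z) - \bar{\lambda}$ fails to be injective on $L^2_{-\alpha}(\gamma)$; Lemma~\ref{eigenvH*} then forces $\bar{\lambda} \in (-1/2, 1/2)$, and since isolated eigenvalues of $H(z)$ and of $H^\ast(z)$ outside the essential spectrum are paired by complex conjugation we conclude $\lambda \in (-1/2, 1/2)$. The main obstacle will be the first step: cleanly separating the leading flat-wedge part of the kernel of $H(z)$ near each corner from the curvature- and $z$-dependent remainders, and showing that the latter produce only compact contributions uniformly in $z$ on the weighted space $L^2_\alpha(\gamma)$, so that the Mellin pseudo-differential index formula applies. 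Once the symbol has been identified, the rest of the proof proceeds mechanically from the Fredholm index formalism combined with the Kellogg-type bound already established in Lemma~\ref{eigenvH*}.
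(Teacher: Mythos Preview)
Your proposal is correct and follows essentially the same approach as the paper's proof: reduce to the local corner operators via the compact perturbation $H_1(z)$, identify each corner piece (up to compacts) with the off-diagonal $2\times 2$ planar wedge Neumann--Poincar\'e block, apply the Mellin index formula from Section~\ref{subsec:mellinpdo}, invoke analytic Fredholm on the index-zero region, and finish with Lemma~\ref{eigenvH*} on the adjoint. The paper carries out the extraction of the flat-wedge leading part explicitly via the asymptotic \eqref{eq:KMellinasymp} and the substitution $\sigma=\tan(s/2)$, which realizes concretely the step you correctly flag as the main obstacle; it also appeals to \cite[Lemma~12]{Per21} for the connectedness of the complement of $\bigcup_j(\widehat{\Sigma}_{\alpha,\beta_j}\cup\widehat{\Sigma}_{\alpha,\beta_j}^-)$, a point you use implicitly when applying analytic Fredholm on the unbounded component.
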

\begin{proof} 		
	Let $\chi_i$ be the characteristic function of $\bar{\gamma}_i$, $i = 1,2$, and let $\varphi_{1,1}=\chi_1\varphi_1$, $\varphi_{1,2}=\chi_2\varphi_1$. We first analyze the operator $\varphi_{1,2}H(z)\varphi_{1,1}$. Without loss of generality we may assume that $E_1$ is the north pole of $S^2$, that $\gamma_1$ lies in the plane $x_2 = 0$, and that $n_{\omega'}=(0,-1,0)$ for $\omega' \in \gamma_1$.  Then $\beta_1$ is the polar angle between $\gamma_1$ and $\gamma_2$. Therefore, $\omega \in \gamma_2$ and $\omega' \in \gamma_1$ can be written 
	$\omega=(\cos \beta_{1} \sin s, \sin\beta_{1} \sin s , \cos s)$, $\omega'=(\sin s', 0, \cos s')$, where, as before, $s$ and $s'$ denote the arc lengths along $\gamma$ from $E_1$ to $\omega$ and $\omega'$, respectively. 
	
	In this parametrization of $\omega \in \gamma_2$ and $\omega' \in \gamma_1$, the kernel of $H(z)$ can be written
	\begin{align*}
	H(z)(s,s') = \frac{1}{4\pi} \int_0^\infty t^{z+3/2}\frac{ b}{(t^2-2at+1)^{3/2}}\frac{dt}{t},
	\end{align*}
	where  $a= a(s,s') = \omega\cdot\omega'=\cos s\cos s'+\cos\beta_{1} \sin s \sin s'$ and $b= b(s,s') = - n_{\omega'}\cdot\omega= \sin\beta_{1} \sin s$. From \eqref{eq:KMellinasymp}, we thus have that
	$$\varphi_{1,2}H(z)\varphi_{1,1}(\omega,\omega') = \frac{1}{4\pi}\frac{b(s,s')}{1-a(s,s')} + I_1(s,s'), \qquad \omega \in \gamma_2, \; \omega' \in \gamma_1.$$
	where the kernel $I_1(s,s') = O(s (1+|\log(s+s')|))$ defines an operator $I_1 \colon L^2_\alpha(\gamma) \to L^2_\alpha(\gamma)$ which is Hilbert--Schmidt. We introduce the notation
	$$Y_{\beta_1}(s,s') = \frac{1}{4\pi}\frac{b(s,s')}{1-a(s,s')}.$$
	
	For a small number $s_0 > 0$ (reflecting the size of the support of $\varphi_1$), we now consider $\varphi_{1,2} Y_{\beta_1} \varphi_{1,1}$, naturally understood as an operator on $L^2( (0,s_0), \, s^{-\alpha} ds)$. Performing the change of variables $\sigma=\tan (s/2)$, which induces an isomorphism
	$$Q \colon L^2( (0,s_0), \, s^{-\alpha}ds) \to L^2((0, \tan(s_0/2)), \sigma^{-\alpha}d\sigma), \quad Q v(\sigma) = v(2\arctan \sigma),$$
	we obtain
	\begin{equation} \label{eq:changeofvar}
	\begin{aligned}
	\Upsilon_{\beta_1} u(\sigma') &:= Q\varphi_{1,2} Y_{\beta_1} \varphi_{1,1}Q^{-1} u(\sigma') \\ &= \frac{\sin\beta_{1}}{2\pi} Q\varphi_{1,2}(\sigma) \int_0^{\tan (\frac{s_0}{2})}\frac{\frac{\sigma}{\sigma'}}{1+\left(\frac{\sigma}{\sigma'}\right)^2-2\cos\beta_{1}\frac{\sigma}{\sigma'}} Q\varphi_{1,1}(\sigma') u(\sigma')\,\frac{d\sigma'}{\sigma'}. 
	\end{aligned}
	\end{equation}
	We see that $\Upsilon_{\beta_1}$ coincides with the localization $Q\varphi_{1,2} Z_{\beta_1} Q\varphi_{1,1}$ of a Mellin convolution operator $Z_{\beta_1} \colon L^2(\mathbb{R}_+, \sigma^{-\alpha} d\sigma) \to L^2(\mathbb{R}_+, \sigma^{-\alpha} d\sigma)$ with convolution kernel
	$$Z_{\beta_1}(t) = \frac{\sin \beta_1}{2\pi} \frac{t}{t^2 - 2t \cos\beta_1 + 1}.$$
	This is the same convolution kernel that appears in the study of the Neumann--Poincar\'e operator for planar polygonal domains \cite{Mitreapolygon}. We further conjugate with the unitary multiplication operator
	$$M_{\sigma^{\frac{1-\alpha}{2}}}:L^2((0, \tan(s_0/2)), \sigma^{-\alpha} \, d\sigma)\rightarrow L^2\left((0, \tan(s_0/2)), d\sigma/\sigma \right),$$
	obtaining that
	$$M_{\sigma^{\frac{1-\alpha}{2}}} \Upsilon_{\beta_1} M_{\sigma^{\frac{\alpha-1}{2}}} \colon L^2\left((0, \tan(s_0/2)), d\sigma/\sigma \right) \to L^2\left((0, \tan(s_0/2)), d\sigma/\sigma \right)$$
	coincides with the localization $Q\varphi_{1,2} Z_{\alpha, \beta_1} Q\varphi_{1,1}$ of the Mellin convolution operator $Z_{\alpha, \beta_1}$ with convolution kernel
	$$Z_{\alpha, \beta_1}(t) = \frac{\sin \beta_1}{2\pi} \frac{t^{\frac{3-\alpha}{2}}}{t^2 - 2t \cos\beta_1 + 1}.$$
	
	Therefore $M_{\sigma^{\frac{1-\alpha}{2}}} \Upsilon_{\beta_1} M_{\sigma^{\frac{\alpha-1}{2}}}$ belongs to the algebras of Mellin operators considered in \cite{Elschner1987, Lewis1990, Mitreapolygon}. \red{By applying the result described in Section~\ref{subsec:mellinpdo}}, we obtain that
	 \begin{equation*}
	\sigma_{\ess}\left(\varphi_{1,2} Y_{\beta_1} \varphi_{1,1},L^2_\alpha(\gamma)\right) = \left\{\MM Z_{\alpha, \beta_1}(i\xi) \, : \, -\infty \leq \xi \leq \infty \right\} = \Sigma_{\alpha, \beta_1}.
	\end{equation*}
	Furthermore, for any $\lambda \notin \Sigma_{\alpha, \beta_1}$, we have for the Fredholm index that
	$$
	\ind (\varphi_{1,2} Y_{\beta_1} \varphi_{1,1} -\lambda, L^2_\alpha(\gamma))=W(\lambda, \Sigma_{\alpha, \beta_1}),
	$$
	where $W(\lambda, \Sigma_{\alpha, \beta_1})$ denotes the winding number of $\lambda$ with respect to the Jordan curve $\Sigma_{\alpha, \beta_1}$. In particular,
 $$\widehat{\Sigma}_{\alpha, \beta_1} \subset \sigma\left(\varphi_{1,2} Y_{\beta_1} \varphi_{1,1}, L^2_\alpha(\gamma)\right).$$
	
By geometrical symmetry, as operators on $L^2( (0,s_0), \, s^{-\alpha}ds)$, $\varphi_{1,1} H(z)\varphi_{1,2}$ only differs from $\varphi_{1,2} H(z)\varphi_{1,1}$ by a compact operator. More precisely, with respect to the decomposition $L^2_\alpha(\gamma) = L^2_{\alpha}(\gamma_1)\oplus L^2_{\alpha}(\gamma_2) \oplus\cdots\oplus L^2_{\alpha}(\gamma_J)$, we have that
	\begin{equation}\label{decompH}
\varphi_1H(z)\varphi_1=\begin{pmatrix} 0 & \varphi_{1,2} Y_{\beta_1} \varphi_{1,1} &\cdots & 0\\  \varphi_{1,2} Y_{\beta_1} \varphi_{1,1} & 0 & \cdots & 0 \\ \vdots &\vdots &\vdots &\vdots \\0 & 0 & \cdots & 0 \end{pmatrix} + \textrm{compact}.
\end{equation}
The previous considerations of essential spectrum and Fredholm index, and some elementary linear algebra, therefore yield that
$$\widehat{\Sigma}_{\alpha, \beta_1} \cup \widehat{ \Sigma}_{\alpha, \beta_1}^- \subset \sigma(\varphi_1H(z)\varphi_1, L^2_\alpha(\gamma)),$$
and that 
$$\ind ( \varphi_1H(z)\varphi_1 - \lambda, L^2_\alpha(\gamma)) = W(\lambda, \Sigma_{\alpha, \beta_1} \cup \Sigma_{\alpha, \beta_1}^-), \qquad \lambda \notin \Sigma_{\alpha, \beta_1} \cup \Sigma_{\alpha, \beta_1}^-.$$ 

The preceding analysis applies equally well to any of the operators $\varphi_j H(z) \varphi_j$, $j = 2,\ldots, J$. Adding up and using the compactness of $H_1(z)$ we therefore find that
$$\bigcup_{1 \leq j \leq J} (\widehat{\Sigma}_{\alpha, \beta_j} \cup \widehat{ \Sigma}_{\alpha, \beta_j}^-) \subset \sigma(H(z), L^2_\alpha(\gamma)),$$
and that 
$$\ind ( H(z) - \lambda, L^2_\alpha(\gamma)) = 0, \qquad \lambda \notin \bigcup_{1 \leq j \leq J} (\widehat{\Sigma}_{\alpha, \beta_j} \cup \widehat{ \Sigma}_{\alpha, \beta_j}^-).$$
It follows from the considerations in \cite[Lemma~12]{Per21} that the complement of $\bigcup_{j} (\widehat{\Sigma}_{\alpha, \beta_j} \cup \widehat{ \Sigma}_{\alpha, \beta_j}^-)$ is connected. Thus the analytic Fredholm theorem yields that the spectrum $\Lambda_{\gamma, z}^\alpha$ in this complement consists of isolated eigenvalues. If $\mre z = 0$, Lemma~\ref{eigenvH*} implies that $\Lambda_{\gamma,z}^\alpha \subset (-1/2,1/2)$, since the index is $0$.

	Finally, we shall see that any eigenvalue $\lambda_z \in \Lambda_{\gamma, z}^\alpha$ depends continuously on $z$ in the strip $-3/2<\mre z<3/2$. Consider the disc $C_r:=\{\lambda\in\C: \: |\lambda-\lambda_z| \leq r\}$, for $r > 0$ so small that 
	$$\sigma(H(z), L^2_\alpha(\gamma)) \cap C_r =\{\lambda_z\}.$$
	By the analyticity of $z' \mapsto H(z')$, we know that $z' \mapsto (H(z') - \lambda)^{-1}$ is analytic for $z'$ close to $z$ and $\lambda \in \partial C_r$. In particular, 
	$$\lim_{z'\to z}P_{\partial_{C_r}}^{H(z')} = P_{\partial_{C_r}}^{H(z)},$$
	where
	$$P_{\partial_{C_r}}^{H(z)}=-\frac{1}{2\pi i}\int_{\partial C_r}(H(z)-\lambda)^{-1} \, d\lambda$$
	denotes the spectral projection corresponding to $\partial C_r$. 	
	\end{proof}
	We shall also require the following lemma in our analysis.
	\begin{lemma} \label{lem:eigdec}
	Under the conditions of Lemma~\ref{spectH}, the sets of isolated eigenvalues $\Lambda^{\alpha}_{\gamma, z}$ are increasing in $0 \leq \alpha < 1$. 
	\end{lemma}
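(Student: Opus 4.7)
My plan is to combine Fredholm duality with the natural inclusion of the dual weighted spaces. Fix $0 \leq \alpha_1 < \alpha_2 < 1$ and let $\lambda \in \Lambda^{\alpha_1}_{\gamma, z}$. Since $q$ is bounded on $\gamma$, the inclusion $L^2_\beta(\gamma) \hookrightarrow L^2_{\beta'}(\gamma)$ is continuous whenever $\beta' \leq \beta$; in particular $L^2_{\alpha_2}(\gamma) \hookrightarrow L^2_{\alpha_1}(\gamma)$ and, going in the opposite direction, $L^2_{-\alpha_1}(\gamma) \hookrightarrow L^2_{-\alpha_2}(\gamma)$, which is the reversal that will drive the conclusion.

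The first step is to verify the monotone nesting of the essential spectra $\widehat{\Sigma}_{\alpha_2, \beta_j} \cup \widehat{\Sigma}_{\alpha_2, \beta_j}^- \subseteq \widehat{\Sigma}_{\alpha_1, \beta_j} \cup \widehat{\Sigma}_{\alpha_1, \beta_j}^-$ for every $j$, so that $\lambda$ lies outside the essential spectrum of $H(z)$ on $L^2_{\alpha_2}(\gamma)$ as well. The symbol
\begin{equation*}
f_j(w) = \frac{1}{2}\frac{\sin((\pi - \beta_j) w)}{\sin(\pi w)}
\end{equation*}
is holomorphic and nonvanishing on the closed strip $\{0 \leq \mre w \leq 1/2\}$: the apparent singularity at $w = 0$ is removable with value $(\pi - \beta_j)/(2\pi)$, and the first nonzero zero of $\sin((\pi - \beta_j) w)$ lies at $\pi/|\pi - \beta_j|$, outside $[0, 1/2]$ for $\beta_j \in (0, 2\pi)$. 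Moreover $f_j(w) \to 0$ as $|\mim w| \to \infty$. I would apply the argument principle to a long rectangle with vertical sides $\mre w = (1-\alpha_1)/2$ and $\mre w = (1-\alpha_2)/2$ and let the imaginary extent tend to infinity; the horizontal contributions vanish, and the remaining vertical contributions identify the winding number difference of $\lambda$ around $\Sigma_{\alpha_1, \beta_j}$ and $\Sigma_{\alpha_2, \beta_j}$ as the non-negative number of solutions of $f_j(w) = \lambda$ in the open strip. A direct computation confirms that $\Sigma_{\alpha, \beta_j}$ is traversed counterclockwise around its interior (the case $\beta_j \in (\pi, 2\pi)$ is handled symmetrically through $\widehat{\Sigma}^-_{\alpha, \beta_j}$), so the winding numbers belong to $\{0, 1\}$. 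Since $\lambda \notin \widehat{\Sigma}_{\alpha_1, \beta_j}$ has winding number zero around $\Sigma_{\alpha_1, \beta_j}$, the winding number around $\Sigma_{\alpha_2, \beta_j}$ must also vanish; the borderline case $\lambda \in \Sigma_{\alpha_2, \beta_j}$ is excluded by a small perturbation, since otherwise interior points of $\widehat{\Sigma}_{\alpha_2, \beta_j}$ arbitrarily close to $\lambda$ would still lie outside $\widehat{\Sigma}_{\alpha_1, \beta_j}$ but carry winding number one, contradicting the inequality. The index analysis in the proof of Lemma~\ref{spectH} then gives that $H(z) - \lambda$ is Fredholm of index zero on $L^2_{\alpha_2}(\gamma)$.

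For the second step, I would dualize: by the computation preceding Lemma~\ref{lem:Kdagger}, the $L^2(\gamma)$-adjoint of $H(z) \colon L^2_\alpha(\gamma) \to L^2_\alpha(\gamma)$ is $H^\ast(z) \colon L^2_{-\alpha}(\gamma) \to L^2_{-\alpha}(\gamma)$, which is also Fredholm of index zero. Fredholm duality for index-zero operators then yields
\begin{equation*}
\dim \ker(H(z) - \lambda)\bigr|_{L^2_{\alpha_i}(\gamma)} = \dim \ker(H^\ast(z) - \bar{\lambda})\bigr|_{L^2_{-\alpha_i}(\gamma)}, \qquad i = 1, 2.
\end{equation*}
The continuous inclusion $L^2_{-\alpha_1}(\gamma) \hookrightarrow L^2_{-\alpha_2}(\gamma)$ gives
\begin{equation*}
\ker(H^\ast(z) - \bar\lambda)\bigr|_{L^2_{-\alpha_1}(\gamma)} \subseteq \ker(H^\ast(z) - \bar\lambda)\bigr|_{L^2_{-\alpha_2}(\gamma)},
\end{equation*}
so $\dim \ker(H(z) - \lambda)|_{L^2_{\alpha_1}(\gamma)} \leq \dim \ker(H(z) - \lambda)|_{L^2_{\alpha_2}(\gamma)}$. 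Since $\lambda \in \Lambda^{\alpha_1}_{\gamma, z}$ makes the left-hand side at least one, $\lambda$ is an eigenvalue of $H(z)$ on $L^2_{\alpha_2}(\gamma)$ as well. Finally, the analytic Fredholm theorem argument used at the end of the proof of Lemma~\ref{spectH} shows that $\lambda$ is isolated in $\sigma(H(z), L^2_{\alpha_2}(\gamma))$, so $\lambda \in \Lambda^{\alpha_2}_{\gamma, z}$. The main obstacle is the first step --- the monotone nesting of the curves $\Sigma_{\alpha, \beta_j}$, with its attendant complex-analytic bookkeeping and borderline exclusion --- while the duality/inclusion step is routine once index-zero Fredholmness has been secured on both spaces.
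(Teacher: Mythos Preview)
Your proof is correct and follows essentially the same route as the paper: both arguments pass to the adjoint $H^\ast(z)$ on $L^2_{-\alpha}(\gamma)$, use the inclusion $L^2_{-\alpha_1}(\gamma) \hookrightarrow L^2_{-\alpha_2}(\gamma)$ to transport the eigenvector, and invoke index-zero Fredholmness on $L^2_{\alpha_2}(\gamma)$ (via the monotone nesting of the regions $\widehat{\Sigma}_{\alpha,\beta_j}$) to conclude that $\lambda$ is again an isolated eigenvalue. The only difference is that the paper cites \cite[Lemma~12]{Per21} for the nesting $\widehat{\Sigma}_{\alpha_2,\beta_j}\subset\widehat{\Sigma}_{\alpha_1,\beta_j}$, whereas you sketch an argument-principle proof of this fact yourself; your sketch is sound, though the orientation and borderline bookkeeping you flag as the ``main obstacle'' is exactly what is carried out in that reference.
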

\begin{proof}
	Suppose that $\lambda \in \Lambda^{\alpha}_{\gamma, z}$. Then $\ind (H(z) - \lambda, L^2_\alpha(\gamma)) = 0$ by (the proof of) Lemma~\ref{spectH}, and thus $\bar{\lambda}$ is an eigenvalue of $H^\ast(z) \colon L^2_{-\alpha'}(\gamma) \to L^2_{-\alpha'}(\gamma)$ for $\alpha \leq \alpha' < 1$, since the spaces $L^2_{-\alpha'}(\gamma)$ are increasing in $\alpha'$. However, the regions $\widehat{\Sigma}_{\alpha, \beta_j}$ are decreasing in $0 \leq \alpha < 1$ \cite[Lemma~12]{Per21}, and thus also $\ind (H(z) - \lambda, L^2_{\alpha'}(\gamma)) = 0$. Therefore $\lambda \in \Lambda^{\alpha'}_{\gamma, z}$, $\alpha \leq \alpha' < 1$. 
\end{proof}
\begin{remark} \rm \label{rmk:shrink}
	Since the spaces $L^2_\alpha(\gamma)$ are decreasing in $\alpha$, it is obvious that the entire point spectrum of $H(z) \colon L^2_\alpha(\gamma) \to L^2_\alpha(\gamma)$ is decreasing. Intuitively, more isolated eigenvalues of $H(z) \colon L^2_\alpha(\gamma) \to L^2_\alpha(\gamma)$ are ``uncovered'' as $\alpha$ increases, as the other part of the spectrum in Lemma~\ref{spectH} shrinks.
	\begin{figure}[h!]
  \includegraphics[scale=0.4]{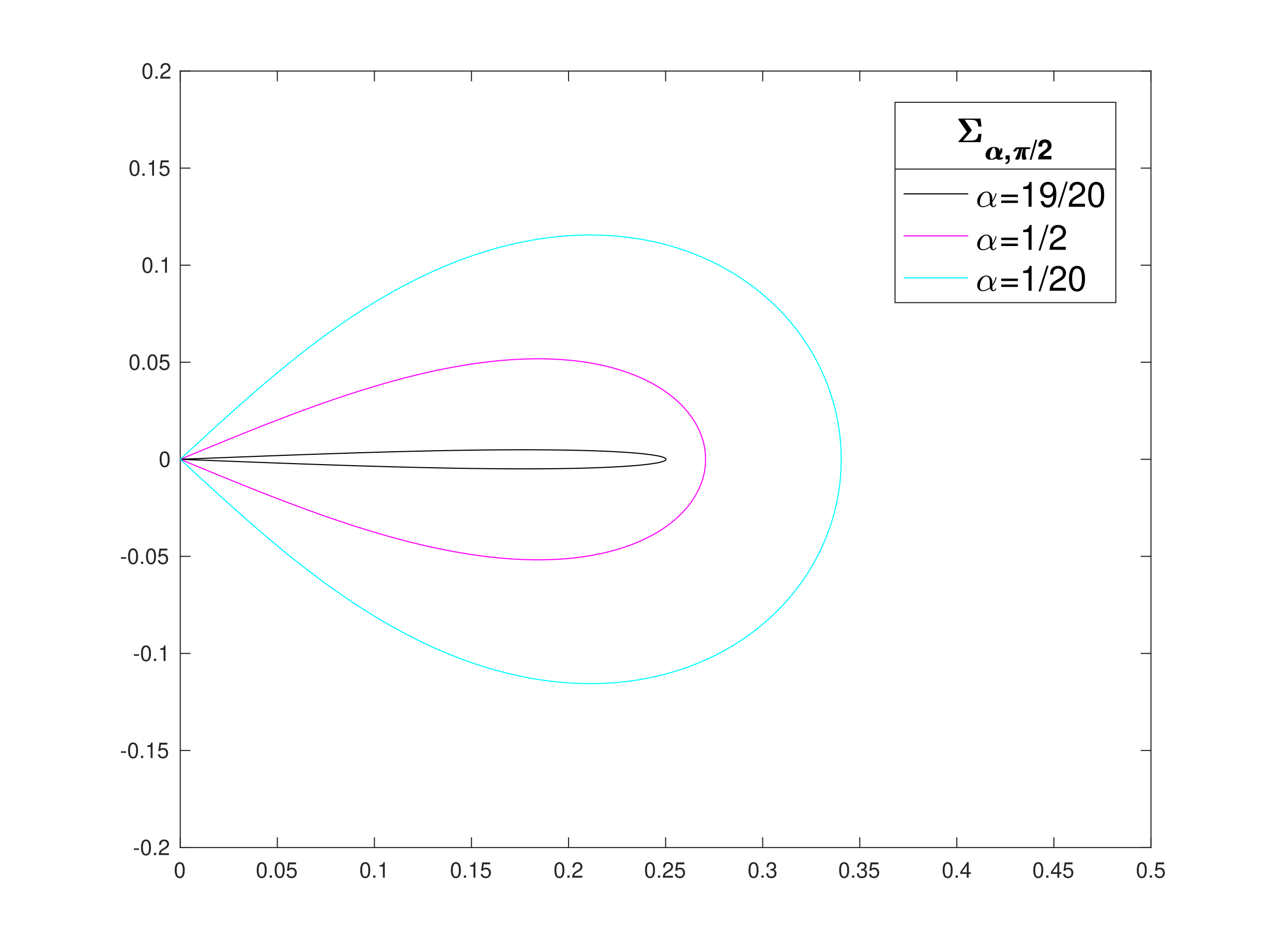}
  \caption{The Jordan curves $\Sigma_{\alpha,\pi/2}$, for different values of $\alpha.$}
  \label{sigmapi2}
\end{figure}
\end{remark}


\subsection{Analysis on polyhedral cones}
As in the previous subsection, $K$ denotes the Neumann--Poincar\'e operator of a polyhedral cone $\Gamma$. We will now investigate the spectrum of $K \colon L^2_\alpha(\Gamma) \to L^2_\alpha(\Gamma)$, $0 \leq \alpha < 1$, starting with the following lemma. Recall that $K^\dagger$ denotes the adjoint of $K$ with respect to the $L^2_0(\Gamma)$-pairing.
\begin{lemma} \label{lem:Weyl} Let $0 \leq \alpha < 1$.
	Suppose that we are given $\xi \in \mathbb{R}$, $\lambda \in \mathbb{C}$, and $d > 0$. Then for any $g \in L^2_\alpha(\gamma)$ with $\|g\|_{L^2_\alpha(\gamma)} = 1$ and $\varepsilon > 0$, there exists $w \in L^2_\alpha(\Gamma)$ with 
	$$\supp w \subset [0,d] \times \gamma, \quad  \|w\|_{L^2_\alpha(\Gamma)} = 1,$$
	such that
	\begin{equation} \label{eq:bumpest1}
	\| (K - \lambda) w\|_{L^2_\alpha(\Gamma)}^2 \leq 4\|(H(i\xi) - \lambda) g\|_{L^2_\alpha(\gamma)}^2 + \varepsilon^2.
	\end{equation}
	Similarly, for any $\tilde{g} \in L^2_{-\alpha}(\gamma)$ with $\|\tilde{g}\|_{L^2_{-\alpha}(\gamma)} = 1$ and $\varepsilon > 0$, there exists $\tilde{w} \in L^2_{-\alpha}(\Gamma)$ with $\supp \tilde{w} \subset [0,d] \times \gamma$, $\|\tilde{w}\|_{L^2_{-\alpha}(\Gamma)} = 1$, and
	\begin{equation} \label{eq:bumpest2}
	\| (K^\dagger - \bar{\lambda}) \tilde{w}\|_{L^2_{-\alpha}(\Gamma)}^2 \leq 4\|(H^\ast(i\xi) - \bar{\lambda}) \tilde{g}\|_{L^2_{-\alpha}(\gamma)}^2 + \varepsilon^2.
	\end{equation}
\end{lemma}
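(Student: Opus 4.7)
The idea is to pull $K$ back to its Mellin representation and construct an ansatz that behaves like a sharply localized Dirac in the spectral variable $\xi$. Seeking $w$ in the separated form $w(r\omega)=r^{-1/2}f(r)g(\omega)$, under the unitary transform $\MM M_{r^{1/2}}$ from \eqref{eq:mellinshifted} and the equivalence of $K$ with $I\otimes H(i\xi')$ of Section~\ref{subsec:mellinconv}, the functions $w$ and $(K-\lambda)w$ correspond, respectively, to $\MM f(i\xi')\,g(\omega)$ and $\MM f(i\xi')(H(i\xi')-\lambda)g(\omega)$. Hence, if $|\MM f(i\xi')|^2\,d\xi'$ concentrates at $\xi'=\xi$, then $\|(K-\lambda)w\|_{L^2_\alpha(\Gamma)}^2$ should approximate $\|(H(i\xi)-\lambda)g\|_{L^2_\alpha(\gamma)}^2$.

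Concretely, I pick a bump $\phi\in C_c^\infty(\R)$ with $\supp\phi\subset[-2T,-T]$, for a parameter $T>0$ to be chosen large, and normalized so $\|\phi\|_{L^2(\R,dt)}=1$. Setting
$$
f(r)=\phi(\log(r/d))\,r^{-i\xi},\qquad w(r\omega)=r^{-1/2}f(r)g(\omega),
$$
one has $\supp w\subset[de^{-2T},de^{-T}]\times\gamma\subset[0,d]\times\gamma$, and the substitution $t=\log(r/d)$ gives both $\|w\|_{L^2_\alpha(\Gamma)}=1$ and, with the Fourier convention $\hat\phi(\eta)=\int_\R\phi(t)e^{-i\eta t}\,dt$,
$$
\MM f(i\xi')=d^{i(\xi'-\xi)}\hat\phi(\xi-\xi').
$$
Plancherel on the Mellin side then yields
$$
\|(K-\lambda)w\|_{L^2_\alpha(\Gamma)}^2=\frac{1}{2\pi}\int_{\R}|\hat\phi(\xi-\xi')|^2\,F(\xi')\,d\xi',\qquad F(\xi'):=\|(H(i\xi')-\lambda)g\|_{L^2_\alpha(\gamma)}^2.
$$

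To conclude, I would note that $F$ is continuous on $\R$ by the analyticity of $z\mapsto H(z)$ in Lemma~\ref{PropertiesH}(i), and uniformly bounded in $\xi'$ since $\|H(i\xi')\|_{\LL(L^2_\alpha(\gamma))}$ is uniformly bounded by Lemma~\ref{PropertiesH}(ii). Meanwhile, by Parseval, $|\hat\phi(\xi-\xi')|^2\,d\xi'/(2\pi)$ is a probability measure that concentrates at $\xi'=\xi$ as $T\to\infty$. Hence for $T$ chosen sufficiently large the integral above is bounded by $F(\xi)+\varepsilon^2\le 4\|(H(i\xi)-\lambda)g\|_{L^2_\alpha(\gamma)}^2+\varepsilon^2$, which gives \eqref{eq:bumpest1}. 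The bound \eqref{eq:bumpest2} follows by the identical argument applied to the tensor product $\tilde w(r\omega)=r^{-1/2}\phi(\log(r/d))r^{-i\xi}\tilde g(\omega)$, replacing $K$ by $K^\dagger$, $H(i\xi')$ by $H^\ast(i\xi')$, and $L^2_\alpha$ by $L^2_{-\alpha}$ throughout, using Lemma~\ref{lem:Kdagger} for the corresponding unitary equivalence. The main bookkeeping points are tracking the Mellin--Fourier normalization constants and verifying uniform boundedness and continuity of $F(\xi')$, both handled by Lemma~\ref{PropertiesH}; the generous factor $4$ in the target bound is slack and is not needed sharply in this argument.
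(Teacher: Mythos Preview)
Your proposal is correct and follows essentially the same approach as the paper: both construct a separated ansatz $w(r\omega)=r^{-1/2}f(r)g(\omega)$ whose radial factor has Mellin transform concentrating at $i\xi$, then use the unitary equivalence with $I\otimes H(i\xi')$ together with continuity and uniform boundedness of $\xi'\mapsto H(i\xi')$ from Lemma~\ref{PropertiesH}. The only cosmetic difference is that the paper takes $f$ to be a normalized characteristic function $\chi_{[A,\sqrt{A}]}(r)\,r^{-1/2-i\xi}$ and splits the resulting integral explicitly into near and far pieces, whereas you use a smooth bump and an approximate-identity argument; for the latter to give concentration as $T\to\infty$ you should specify that $\phi$ is a \emph{rescaled} bump (e.g.\ $\phi(t)=T^{-1/2}\psi(t/T)$ for a fixed $\psi$), since a bump merely supported in $[-2T,-T]$ need not have $|\hat\phi|^2$ concentrating at the origin.
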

\begin{proof}
	For some small $0 < A < 1$ to be specified later, let 
	$$f(r) = {\frac{1}{\sqrt{\frac{1}{2} \log (1/A)}}} \chi_{[A, \sqrt{A}]}(r) r^{-1/2 - i\xi}, \qquad r > 0,$$ 
	and
	$$w(r\omega) = f(r) g(\omega), \qquad r > 0, \; \omega \in \gamma.$$
	Then $\supp w \subset  {[A, \sqrt{A}]}\times \gamma$ and $\|w\|_{L^2_\alpha(\Gamma)} = 1$. Furthermore,
	$$h(i\eta) := \MM (M_{r^{1/2}} f)(i\eta) = {\frac{-i}{\sqrt{\frac{1}{2} \log (1/A)}}} \frac{A^{\frac{i}{2} (\eta - \xi)} - A^{i(\eta - \xi)}}{\eta - \xi}, \qquad \eta \in \R,$$
	so that
	\begin{equation} \label{eq:lochest}
	|h(i\eta)| \leq { \frac{2}{\sqrt{\frac{1}{2} \log (1/A)}\;|\eta - \xi|}, \qquad \eta \in \mathbb{R}. }
	\end{equation}
	By continuity, choose $\delta > 0$ so that $\|H(i\eta) - H(i\xi)\| \leq \varepsilon/3$ whenever $|\eta - \xi| < \delta$. 
	Consider now the fact that
	\begin{align*}
	\| (K - \lambda) w\|_{L^2_\alpha(\Gamma)}^2 &= \frac{1}{2\pi} \|I \otimes (H(z) -\lambda) (h \otimes g) \|^2_{L^2(\mre z =0)\otimes L^2_\alpha(\gamma)} \\ &= \frac{1}{2\pi} \int_{\mathbb{R}} |h(i\eta)|^2 \|(H(i\eta) - \lambda) g\|^2_{L^2_\alpha(\gamma)} \, d\eta. 
	\end{align*}
	For $|\eta - \xi| < \delta$ we have that
	\begin{multline*}
	\frac{1}{2\pi} \int_{|\eta - \xi| < \delta} |h(i\eta)|^2 \|(H(i\eta) - \lambda) g\|^2_{L^2_\alpha(\gamma)} \, d\eta \\ \leq \frac{2}{\pi} \int_{|\eta - \xi| < \delta} |h(i\eta)|^2 (\|(H(i\xi) - \lambda) g\|^2_{L^2_\alpha(\gamma)} + \varepsilon^2/9) \, d\eta \leq  4 \|(H(i\xi) - \lambda) g\|^2_{L^2_\alpha(\gamma)} + \varepsilon^2/2,
	\end{multline*}
	where we used that $\frac{1}{2\pi} \int_{\mathbb{R}} |h(i \eta)|^2 \, d\eta = 1$.
	By the uniform boundedness of $H(i\eta)$ (Lemma~\ref{PropertiesH}) and \eqref{eq:lochest}, we clearly have for $|\eta - \xi| > \delta$ that
	$$\lim_{A \to 0^+} \frac{1}{2\pi} \int_{|\eta - \xi| > \delta} |h(i\eta)|^2 \|(H(i\eta) - \lambda) g\|^2_{L^2_\alpha(\gamma)} \, d\eta = 0.$$
	This yields \eqref{eq:bumpest1}, if we choose $A$ sufficiently small. 
	
	The inequality \eqref{eq:bumpest2} is established through the same reasoning, after recalling Lemma~\ref{lem:Kdagger}.
\end{proof}
We can now establish the following invertibility result.
\begin{theorem}\label{specK}
	Let $0 \leq \alpha < 1$ and $\lambda\in\C$. Then $K - \lambda\colon  L^2_{\alpha}(\Gamma) \to L^2_{\alpha}(\Gamma)$ is invertible if and only if $\lambda\not\in \sigma(H(z), L^2_\alpha(\gamma))$ for every $z$ with $\mre z = 0$, and 
	\begin{equation} \label{eq:uniforminv}
	\sup_{\mre z = 0} \|(H(z)-\lambda)^{-1}\|_{\mathcal{L}(L^2_\alpha(\gamma))} < \infty.
	\end{equation}
	
	Let $d > 0$. If $K - \lambda$ is not invertible, then there is a sequence $(w_n)_{n=1}^\infty$ with $\supp w_n \subset [0, d] \times \gamma$ that is either a singular Weyl sequence for $K-\lambda$ or $K^\dagger - \bar{\lambda}$. That is, in the former case, $\|w_n\|_{L^2_{\alpha}(\Gamma)} = 1$ for every $n$, $w_n \to 0$ weakly in $L^2_\alpha(\Gamma)$, and $\|(K-\lambda)w_n\|_{L^2_\alpha(\Gamma)} \to 0$. In the latter case, $\|w_n\|_{L^2_{-\alpha}(\Gamma)} = 1$, $w_n \to 0$ weakly in $L^2_{-\alpha}(\Gamma)$, and $\|(K^\dagger-\bar{\lambda})w_n\|_{L^2_{-\alpha}(\Gamma)} \to 0$. In particular,
	$$\sigma(K, L^2_{\alpha}(\Gamma)) = \sigma_{\ess}(K, L^2_{\alpha}(\Gamma)).$$
\end{theorem}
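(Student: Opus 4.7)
The plan is to exploit the unitary equivalence between $K \colon L^2_\alpha(\Gamma) \to L^2_\alpha(\Gamma)$ and $I \otimes H(z)$ on $L^2(\{\mre z = 0\}) \otimes L^2_\alpha(\gamma)$ established in Section~\ref{subsec:mellinconv}, thereby reducing everything to a fiberwise analysis of the family $\{H(z)\}_{\mre z = 0}$. Sufficiency of the two conditions will fall out of Lemma~\ref{lem:tensorbdd}; necessity will be handled by contrapositive, producing a singular Weyl sequence with support concentrating at the vertex via the bump construction of Lemma~\ref{lem:Weyl}.

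For sufficiency, assume $\lambda \notin \sigma(H(z), L^2_\alpha(\gamma))$ for every $z$ with $\mre z = 0$ and that \eqref{eq:uniforminv} holds. Analyticity of $z \mapsto H(z)$ in the open strip (Lemma~\ref{PropertiesH}) and continuity of operator inversion give that $z \mapsto (H(z) - \lambda)^{-1}$ is strongly continuous on the imaginary axis, hence strongly measurable. Combined with the uniform bound, Lemma~\ref{lem:tensorbdd} yields a bounded operator $I \otimes (H(z) - \lambda)^{-1}$ on $L^2(\{\mre z = 0\}) \otimes L^2_\alpha(\gamma)$; transported back through the Mellin isomorphism of \eqref{eq:mellinshifted}, this is the bounded inverse of $K - \lambda$ on $L^2_\alpha(\Gamma)$.

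For necessity, suppose that one of the two conditions fails. In Case A, $\lambda \in \sigma(H(z_0), L^2_\alpha(\gamma))$ for some $z_0$ with $\mre z_0 = 0$, and one of the following must hold: (i) $H(z_0) - \lambda$ is not bounded below, so there exist unit vectors $g_n \in L^2_\alpha(\gamma)$ with $\|(H(z_0) - \lambda)g_n\|_{L^2_\alpha(\gamma)} \to 0$; or (ii) $H(z_0) - \lambda$ is bounded below but not surjective, in which case a Hahn--Banach argument applied to the $L^2(\gamma)$-pairing between $L^2_\alpha(\gamma)$ and $L^2_{-\alpha}(\gamma)$ produces a nonzero $\tilde g \in \ker(H^\ast(z_0) - \bar\lambda) \subset L^2_{-\alpha}(\gamma)$. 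In Case B, \eqref{eq:uniforminv} fails, so I can pick $z_n = i\xi_n$ with $\|(H(z_n) - \lambda)^{-1}\| \to \infty$ and extract unit vectors $g_n$ with $\|(H(z_n) - \lambda)g_n\|_{L^2_\alpha(\gamma)} \to 0$. In Case A(i) and in Case B, I feed $(g_n, \mim z_0)$ or $(g_n, \xi_n)$ into \eqref{eq:bumpest1} of Lemma~\ref{lem:Weyl} with the parameter $A_n \to 0^+$ chosen small enough that $[A_n, \sqrt{A_n}] \subset [0,d]$ and the error $\varepsilon_n \to 0$, obtaining a sequence $w_n$ with $\supp w_n \subset [0,d] \times \gamma$, $\|w_n\|_{L^2_\alpha(\Gamma)} = 1$, and $\|(K-\lambda)w_n\|_{L^2_\alpha(\Gamma)} \to 0$. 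In Case A(ii), I apply \eqref{eq:bumpest2} instead with $\tilde g$ (normalized in $L^2_{-\alpha}(\gamma)$) to construct a Weyl sequence for $K^\dagger - \bar\lambda$ on $L^2_{-\alpha}(\Gamma)$.

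Weak convergence $w_n \rightharpoonup 0$ is inherited from the shrinking supports: for any $h$ in $L^2_\alpha(\Gamma)$ (respectively $L^2_{-\alpha}(\Gamma)$), Cauchy--Schwarz plus absolute continuity of the integral give
\[
|\langle w_n, h \rangle| \leq \|w_n\|\cdot \|h \chi_{[A_n,\sqrt{A_n}]\times \gamma}\| \to 0.
\]
Since a Weyl sequence for $K - \lambda$ precludes $K - \lambda$ from being upper semi-Fredholm, and a Weyl sequence for $K^\dagger - \bar\lambda$ precludes $K - \lambda$ from being lower semi-Fredholm (by the duality of Section~\ref{subsec:spectraltheory}), in either case $\lambda \in \sigma_{\ess}(K, L^2_\alpha(\Gamma))$, establishing also the final identity $\sigma(K) = \sigma_{\ess}(K)$. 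The main subtlety I anticipate is subcase A(ii): because $L^2_\alpha(\gamma)$ is not its own Hilbert dual when $\alpha \neq 0$, I cannot simply transpose an approximate null sequence for $H(z_0) - \lambda$ into one for $H^\ast(z_0) - \bar\lambda$. The remedy is to work with the $L^2(\gamma)$-pairing that makes $L^2_\alpha$ and $L^2_{-\alpha}$ dual, invoke Lemma~\ref{lem:Kdagger} to identify $H^\ast(z)$ as the Mellin symbol of $K^\dagger$, and build the Weyl sequence on the dual side; this dichotomy is exactly what produces the ``either/or'' clause in the statement.
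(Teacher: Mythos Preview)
Your proof is correct and follows essentially the same route as the paper's: unitary equivalence plus Lemma~\ref{lem:tensorbdd} for sufficiency, and the bump construction of Lemma~\ref{lem:Weyl} applied to the three subcases (resolvent unbounded, $H(z_0)-\lambda$ not bounded below, $H(z_0)-\lambda$ bounded below but not onto) for necessity. You are, if anything, more explicit than the paper about strong measurability, about the weak-null property from shrinking supports, and about why the dichotomy in subcase~A(ii) forces passage to $K^\dagger$ on $L^2_{-\alpha}$.
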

\begin{proof}
	Assume first that $H(z) - \lambda$ is invertible for every $\mre z=0$ and that \eqref{eq:uniforminv} holds. Then, by Lemma~\ref{lem:tensorbdd}, $I\otimes(H(z) - \lambda)^{-1}$ defines a bounded operator on $L^2(\mre z=0)\otimes L^2_\alpha(\gamma)$ which is the inverse of $I \otimes (H(z) - \lambda)$. Since this latter operator is unitarily equivalent to 
	$K - \lambda: L^2_{\alpha}(\Gamma) \to L^2_{\alpha}(\Gamma)$, this shows that $K - \lambda$ is invertible.
	
	For the converse, suppose first that $H(z) - \lambda$ is invertible for every $z$ on the line $\mre z=0$, but that $\|(H(z) - \lambda)^{-1}\|_{\mathcal{L}(L^2_\alpha(\gamma))}$ is unbounded on $\mre z=0$. Then there exists a sequence $(\xi_n) \subset \mathbb{R}$ and functions $g_n$ such that $\|g_n\|_{L^2_\alpha(\gamma)} = 1$ and $\lim_{n \to \infty} \|(H(i\xi_n) - \lambda) g_n\|=0$. Applying Lemma~\ref{lem:Weyl}, we construct a unit sequence $(w_n)$ with $\supp w_n \subset [0, 2^{-n}] \times \gamma$ and $\lim_{n \to \infty} \|(K-\lambda)w_n\|_{L^2_\alpha(\Gamma)} = 0.$ In particular, $K - \lambda$ is not invertible in this case.
	
	Finally, we have to consider the case when there is a $\xi_0 \in \mathbb{R}$ such that $\lambda \in \sigma(H(i\xi_0), L^2_\alpha( \gamma))$. If $H(i\xi_0) - \lambda$ is not bounded from below, we construct the desired Weyl sequence for $K-\lambda$ from Lemma~\ref{lem:Weyl}. If $H(i\xi_0) - \lambda$ is bounded from below (but not invertible), it must be that $\bar{\lambda}$ is an eigenvalue of $H^\ast(i\xi_0)$, and we use Lemma~\ref{lem:Weyl} to construct a Weyl sequence for $K^\dagger - \bar{\lambda}$.
\end{proof}
As a consequence we obtain the following incomplete description of the spectrum of $K \colon L^2_\alpha(\Gamma) \to L^2_\alpha(\Gamma)$, illustrated in Figure~\ref{fig:conespecL2}. By the proof of Lemma~\ref{spectH}, the Jordan curve $\Sigma_{\alpha, \beta_j}$ arises as the Mellin transform of either a positive or a negative kernel, depending on the sign of $\pi - \beta_j$. We therefore have that
$$|\Sigma_{\alpha, \beta_j}| := \max\{|\lambda| \, : \, \lambda \in \Sigma_{\alpha, \beta_j}\} = \frac{1}{2}\left|\frac{\sin((\pi-\beta_j)(1/2-\alpha/2))}{\sin(\pi(1/2-\alpha/2))} \right|,$$
and
$$\widehat{\Sigma}_{\alpha, \beta_j} \cup \widehat{ \Sigma}_{\alpha, \beta_j}^- \subset C_{|\Sigma_{\alpha, \beta_j}|} := \{\lambda \, : \, |\lambda| \leq |\Sigma_{\alpha, \beta_j}|\}.$$
A calculus argument shows that $|\Sigma_{\alpha, \beta_j}|$ is decreasing in $0 \leq \alpha < 1$, for every $j$ \cite[Lemma~12]{Per21}. If $c > d$ we use the convention that $(c, d] = \emptyset$. 
	\begin{theorem}\label{contessspecK}
		Let $\Gamma$ be a polyhedral cone with angles $\beta_1, \ldots, \beta_J$, and let $0\le \alpha<1$. We let $j^\ast$ be an index such that 
		$$|\Sigma_{\alpha, \beta_{j^*}}| = \max_j |\Sigma_{\alpha, \beta_j}|,$$
		and denote {\rm
		$$\Lambda^\alpha = \{\lambda \, : \, \lambda \textrm{ is an isolated eigenvalue of } H(z) \colon L^2_\alpha(\gamma) \to L^2_\alpha(\gamma), \textrm{ for some} \mre z = 0\}.$$ }
		Then there are two numbers $0 \leq \mu_{\pm} < 1/2$, independent of $\alpha$, such that 
		$$\Lambda^\alpha = \left[-\mu_-, -|\Sigma_{\alpha, \beta_{j^*}}|\right) \cup \left(|\Sigma_{\alpha, \beta_{j^*}}|, \mu_+\right].$$
		Furthermore, we have that $\sigma(K, L^2_{\alpha}(\Gamma)) = \sigma_{\ess}(K, L^2_{\alpha}(\Gamma))$ and that 
		\begin{equation}\label{conteinspecK}
		\bigcup_{{1\le j\le J   }}(\widehat{\Sigma}_{\alpha, \beta_j} \cup \widehat{ \Sigma}_{\alpha, \beta_j}^-) \cup \Lambda^\alpha 
		\subset \sigma \left(K  , L^2_\alpha(\Gamma) \right) \subset C_{|\Sigma_{\alpha, \beta_{j*}}|} \cup \Lambda^\alpha.\end{equation}
	\end{theorem}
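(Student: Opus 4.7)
The plan is to address the three parts of the theorem in turn: the identity $\sigma = \sigma_{\ess}$, the two-sided containment \eqref{conteinspecK}, and the precise description of $\Lambda^\alpha$. The equality $\sigma(K, L^2_\alpha(\Gamma)) = \sigma_{\ess}(K, L^2_\alpha(\Gamma))$ is already established in Theorem~\ref{specK}. For the left-hand inclusion of \eqref{conteinspecK}, I would fix any $\xi \in \R$ and apply Lemma~\ref{spectH} to read off $\sigma(H(i\xi), L^2_\alpha(\gamma))$, then invoke Theorem~\ref{specK} to embed this spectrum into $\sigma(K, L^2_\alpha(\Gamma))$; the regions $\widehat{\Sigma}_{\alpha, \beta_j} \cup \widehat{\Sigma}_{\alpha, \beta_j}^-$ are independent of $\xi$, while the union over $\xi$ of the sets $\Lambda^\alpha_{\gamma, i\xi}$ returns $\Lambda^\alpha$ by definition.

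For the right-hand inclusion, I would fix $\lambda \notin C_{|\Sigma_{\alpha, \beta_{j^*}}|} \cup \Lambda^\alpha$ and verify the criteria of Theorem~\ref{specK}. The choice of $j^\ast$ forces $|\lambda| > |\Sigma_{\alpha, \beta_j}|$ for every $j$, placing $\lambda$ outside each $\widehat{\Sigma}_{\alpha, \beta_j} \cup \widehat{\Sigma}_{\alpha, \beta_j}^-$; combined with $\lambda \notin \Lambda^\alpha_{\gamma, i\xi}$ for all $\xi$ and Lemma~\ref{spectH}, this shows $\lambda \notin \sigma(H(i\xi), L^2_\alpha(\gamma))$ for every $\xi \in \R$. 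The main technical step is then the uniform resolvent bound \eqref{eq:uniforminv}: I would choose $\varepsilon > 0$ so that $(1+\varepsilon)|\Sigma_{\alpha, \beta_{j^*}}| < |\lambda|$ and supports of the cutoffs $\varphi_j$ from \eqref{eq:Hdecomp} small enough that Lemma~\ref{PropertiesH}(ii) yields $\|H_0(z)\|_{\mathcal{L}(L^2_\alpha(\gamma))} \leq (1+\varepsilon)|\Sigma_{\alpha, \beta_{j^*}}|$ uniformly on $\mre z = 0$. A Neumann-series argument applied to $H(z) - \lambda = (H_0(z) - \lambda) + H_1(z)$ then provides uniform invertibility for $|\mim z|$ large, since $\|H_1(z)\| \to 0$ by Lemma~\ref{PropertiesH}(ii), and the analyticity from Lemma~\ref{PropertiesH}(i) handles the remaining compact range of $\mim z$.

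To analyze $\Lambda^\alpha$ itself, I would first note from Lemma~\ref{eigenvH*} and Lemma~\ref{spectH} that every point of $\Lambda^\alpha$ is real and lies in $(-1/2, 1/2) \setminus [-|\Sigma_{\alpha, \beta_{j^*}}|, |\Sigma_{\alpha, \beta_{j^*}}|]$. For the interval structure, I would pick any $\lambda_0 \in \Lambda^\alpha_{\gamma, i\xi_0}$ with $\lambda_0 > |\Sigma_{\alpha, \beta_{j^*}}|$ and track it continuously in $\xi$ via the spectral projection formula from the proof of Lemma~\ref{spectH}; the eigenvalue persists as long as the curve remains outside the essential spectrum boundary. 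Applying Lemma~\ref{PropertiesH}(ii) once more yields $\|H(i\xi)\| \leq (1+o(1))|\Sigma_{\alpha, \beta_{j^*}}|$ as $|\xi| \to \infty$, forcing the curve to approach that boundary, and the intermediate value theorem then shows that the projection of the curve onto the $\lambda$-axis covers $(|\Sigma_{\alpha, \beta_{j^*}}|, \lambda_0]$. Defining $\mu_+^\alpha = \sup (\Lambda^\alpha \cap \R_+)$ and arguing symmetrically on the negative side gives the claimed interval structure. The strict bound $\mu_+^\alpha < 1/2$, which I expect to be the most delicate point, should follow from refining the Kellogg identity in the proof of Lemma~\ref{eigenvH*} together with a compactness argument controlling the limits of the eigenvalue curves uniformly in $\xi$.

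Finally, the $\alpha$-independence of $\mu_\pm^\alpha$ combines Lemma~\ref{lem:eigdec}, yielding the monotonicity $\Lambda^\alpha \subset \Lambda^{\alpha'}$ for $\alpha \le \alpha'$, with the following reverse argument. Since $L^2_{\alpha'}(\gamma) \subset L^2_\alpha(\gamma)$ when $\alpha' \geq \alpha$, any eigenfunction $f \in L^2_{\alpha'}(\gamma)$ of $H(i\xi)$ with eigenvalue $\lambda$ satisfying $|\lambda| > |\Sigma_{\alpha, \beta_{j^*}}|$ is still an eigenfunction in $L^2_\alpha(\gamma)$, and since $\lambda$ falls outside the essential part of $\sigma(H(i\xi), L^2_\alpha(\gamma))$ described by Lemma~\ref{spectH}, one concludes $\lambda \in \Lambda^\alpha_{\gamma, i\xi}$. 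This gives $\Lambda^{\alpha'} \cap (|\Sigma_{\alpha, \beta_{j^*}}|, \infty) = \Lambda^\alpha \cap (|\Sigma_{\alpha, \beta_{j^*}}|, \infty)$, so that a single $\mu_+ = \sup_\alpha \mu_+^\alpha$ governs the upper endpoint in \eqref{conteinspecK} for every $\alpha$; the analogous argument produces $\mu_-$.
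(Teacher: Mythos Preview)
Your proposal follows essentially the same route as the paper: invoke Theorem~\ref{specK} for both inclusions in \eqref{conteinspecK}, use Lemma~\ref{PropertiesH}(ii) and a Neumann-series argument for the uniform resolvent bound at large $|\xi|$, continuity of $z \mapsto H(z)$ on the remaining compact range, the eigenvalue continuity from Lemma~\ref{spectH} for the interval structure of $\Lambda^\alpha$, and Lemma~\ref{lem:eigdec} together with the inclusion $L^2_{\alpha'}(\gamma) \subset L^2_\alpha(\gamma)$ (the content of Remark~\ref{rmk:shrink}) for the $\alpha$-independence of $\mu_\pm$.

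One point deserves correction: the bound $\mu_+^\alpha < 1/2$ is not the most delicate step and requires no refinement of Kellogg. Your own compactness argument already shows that the supremum is \emph{attained}: by \eqref{eq:essrad} any $\xi$ with $\max \Lambda^\alpha_{\gamma,i\xi}$ close to $\mu_+^\alpha$ lies in a fixed compact set, and continuity of the top eigenvalue on that set yields a $\xi^\ast$ with $\mu_+^\alpha = \max \Lambda^\alpha_{\gamma,i\xi^\ast}$. Since $\mu_+^\alpha$ is then an honest eigenvalue of $H(i\xi^\ast)$, Lemma~\ref{spectH} (via Lemma~\ref{eigenvH*}) gives $\mu_+^\alpha < 1/2$ directly. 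This same attainment argument also supplies the closed upper endpoint $\mu_+$ in the description of $\Lambda^\alpha$, which your interval-tracing argument by itself only shows up to whether the endpoint is included.
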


\begin{figure}[ht]
	\centering
	\includegraphics[width=0.95\linewidth]{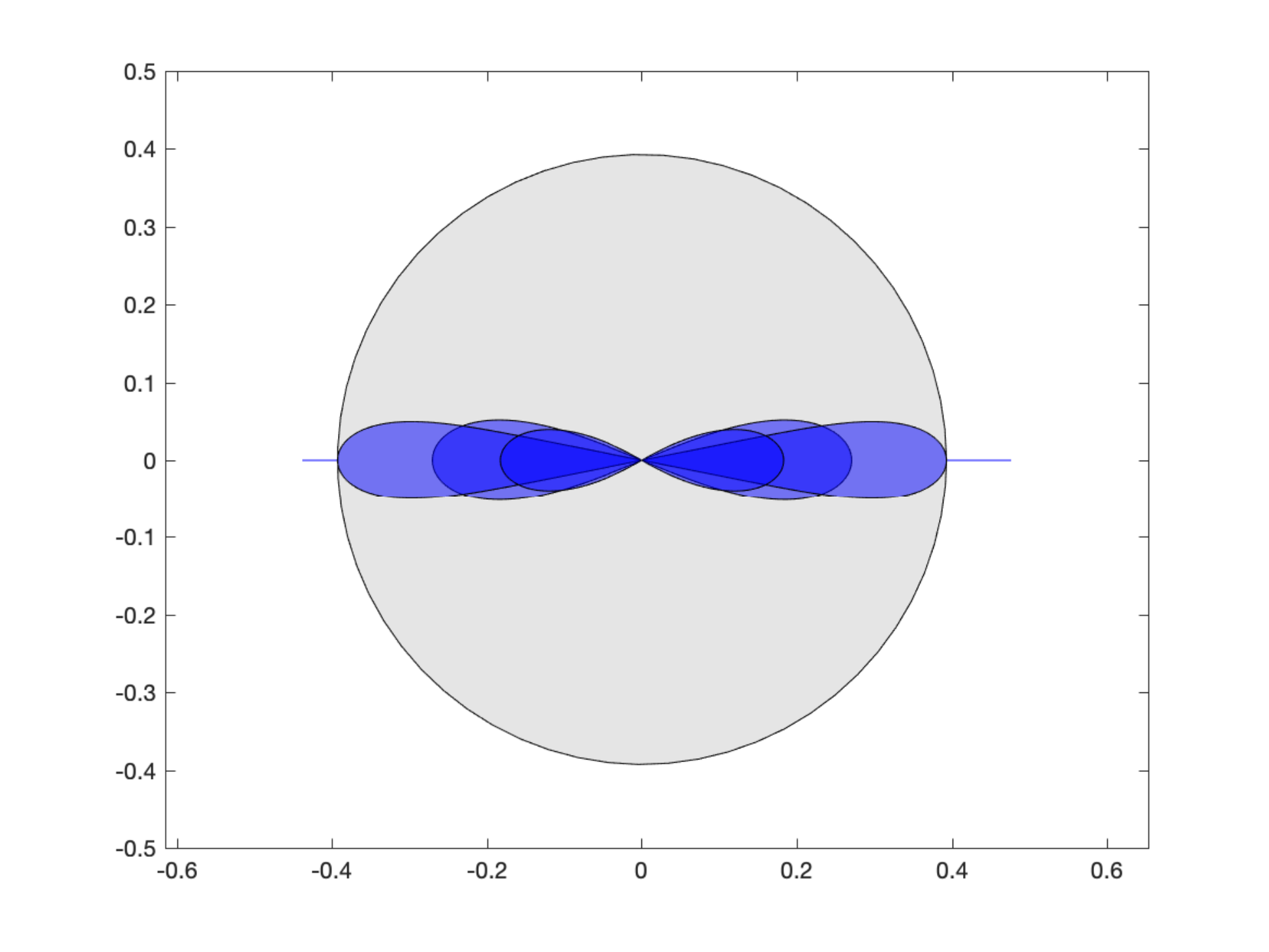}
	\caption{An illustration of Theorem~\ref{contessspecK} for a polyhedral cone $\Gamma$ with angles $\beta_1 = \pi/4$, $\beta_2 = \pi/2$, and $\beta_3 = 2\pi/3$, and $\alpha = 1/2$. The set 
		$$\Theta = \bigcup_{1 \leq j \leq 3}(\widehat{\Sigma}_{1/2, \beta_j} \cup \widehat{ \Sigma}_{1/2, \beta_j}^-) \cup \Lambda^{1/2}$$
	has been plotted in the complex plane, in various shades of blue. The set $\Lambda^{1/2}$ is generically drawn; one or both of the intervals comprising this set may actually be empty. The region $C_{|\Sigma_{1/2, \beta_{1}}|} \setminus \Theta$ is shaded in gray.}
	\label{fig:conespecL2}
\end{figure}

	\begin{remark} \label{rmk:resolvent} \rm
		The interval $[-|\Sigma_{\alpha, \beta_j}|, |\Sigma_{\alpha, \beta_j}|]$ is contained in $\widehat{\Sigma}_{\alpha, \beta_j} \cup \widehat{ \Sigma}_{\alpha, \beta_j}^-$. Therefore Theorem~\ref{contessspecK} characterizes all real points in $\sigma \left(K  , L^2_\alpha(\Gamma) \right)$. There is a gap of complex points $\lambda \in C_{|\Sigma_{\alpha, \beta_{j*}}|} \setminus \bigcup_{{1\le j\le J   }}(\widehat{\Sigma}_{\alpha, \beta_j} \cup \widehat{ \Sigma}_{\alpha, \beta_j}^-)$ in \eqref{conteinspecK} because while $H(z) - \lambda$ is invertible for every $\mre z = 0$, we do not know how to control the resolvent $(H(z) - \lambda)^{-1}$ uniformly in $z$, cf. Theorem~\ref{specK}.
	\end{remark}
	\begin{proof}
	By Lemma~\ref{spectH}, 
	$$\Lambda^\alpha = \bigcup_{\mre z = 0} \Lambda_{\gamma,z}^\alpha \subset (-1/2,1/2) \setminus [-|\Sigma_{\alpha, \beta_{j^\ast}}|, |\Sigma_{\alpha, \beta_{j^\ast}}|].$$ 
	Let 
	$$\mu_{\pm}^\alpha = \sup_{\mre z = 0} \max \{ \lambda \, : \, \pm \lambda \in \Lambda_{\gamma,z}^\alpha\}.$$
	Suppose that $\mu_+^\alpha > |\Sigma_{\alpha, \beta_{j^\ast}}|$, so that there is $z$ such that 
	$$\lambda_{z, +} := \max \Lambda_{\gamma, z}^\alpha > |\Sigma_{\alpha, \beta_{j^\ast}}|.$$
	By Lemma~\ref{spectH}, $\lambda_{z, +}$ depends continuously on $z$ as long as $\lambda_{z, +} > |\Sigma_{\alpha, \beta_{j^\ast}}|$, tracing out an interval as $z$ varies. From item ii) of Lemma~\ref{PropertiesH}, we know that
	\begin{equation} \label{eq:essrad}
	\varlimsup_{|\xi|\to \infty }\|H(i\xi)\|_{\mathcal{L}(L^2_\alpha(\gamma))}\le|\Sigma_{\alpha, \beta_{j^\ast}}|.
	\end{equation}
	It therefore follows that there is a point $z^\ast$ for which $\lambda_{z^\ast, +} = \mu_+^\alpha$ and that 
	$$\Lambda^\alpha \cap [0,1/2) = \left(|\Sigma_{\alpha, \beta_{j^\ast}}|, \mu_+^\alpha\right].$$
	Note also that if $0 \leq \alpha, \alpha' < 1$ are two numbers such that $\mu_+^\alpha > |\Sigma_{\alpha, \beta_{j^\ast}}|$ and $\mu_+^{\alpha'} > |\Sigma_{\alpha', \beta_{j^\ast}}|$, then $\mu_+^\alpha = \mu_+^{\alpha'}$, by Lemma~\ref{lem:eigdec} and Remark~\ref{rmk:shrink}.
	
	If $\mu_-^\alpha > |\Sigma_{\alpha, \beta_{j^\ast}}|$, a similar argument applies for $\Lambda^\alpha \cap (-1/2, 0]$.
	
	By Lemma~\ref{spectH} and Theorem~\ref{specK}, we now have that
	$$\bigcup_{{1\le j\le J   }}(\widehat{\Sigma}_{\alpha, \beta_j} \cup \widehat{ \Sigma}_{\alpha, \beta_j}^-) \cup \Lambda^\alpha 
	\subset \sigma \left(K  , L^2_\alpha(\Gamma) \right).$$
	On the other hand, if $\lambda \notin C_{|\Sigma_{\alpha, \beta_{j*}}|} \cup \Lambda^\alpha$,  \eqref{eq:essrad} allows us to apply a Neumann series argument and the continuity of $H(z)$ to see that
	$$\sup_{\mre z = 0} \|(H(z) - \lambda)^{-1}\|_{\mathcal{L}(L^2_\alpha(\gamma))} < \infty.$$
	Therefore $\lambda \notin \sigma \left(K  , L^2_\alpha(\Gamma) \right)$ in this case, by Theorem~\ref{specK}.
		\end{proof}
	
	\subsection{Localization to $\partial \Omega$}
	Let $\partial \Omega$ be a polyhedron and let $K$ be its Neumann--Poincar\'e operator. In this subsection we will show that the essential spectrum $\sigma_{\ess}(K, L^2_\alpha(\partial \Omega))$, $0 \leq \alpha < 1$, is determined by the tangent polyhedral cones $\Gamma_i$ of the vertices of $\partial \Omega$, $i = 1, \ldots, I$.	We first need the following lemma.
	\begin{lemma}
\label{commcompact}
Let $0 \leq \alpha < 1$, and let $\varphi$ be a Lipschitz function on $\partial \Omega$. Then the commutator $[K,\varphi]=K\varphi-\varphi K$ is compact on $L^2_\alpha(\partial\Omega)$.
\end{lemma}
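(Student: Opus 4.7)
The kernel of the commutator $[K,\varphi]$ is
\[
k(x,y) = \frac{1}{4\pi}\bigl(\varphi(x) - \varphi(y)\bigr) \frac{(y-x)\cdot n_y}{|x-y|^3}.
\]
The plan rests on two observations that produce a kernel which is less singular than that of $K$ itself. Since the faces of $\partial\Omega$ are flat, $(y-x)\cdot n_y$ vanishes whenever $x$ and $y$ lie on the same face. On distinct faces this factor is bounded by $|x-y|$, while the Lipschitz property of $\varphi$ yields $|\varphi(x) - \varphi(y)| \leq L|x-y|$. Combined, these give
\[
|k(x,y)| \leq \frac{L}{4\pi|x-y|},
\]
with $k(x,y) = 0$ when $x$ and $y$ share a face.

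First I would establish compactness on $L^2(\partial\Omega) = L^2_0(\partial\Omega)$ by approximation. For $\epsilon > 0$, let $T_\epsilon$ be the integral operator with kernel $k(x,y)\chi_{|x-y|\geq\epsilon}$. Since this kernel is uniformly bounded on the compact surface $\partial\Omega\times\partial\Omega$, $T_\epsilon$ is Hilbert--Schmidt, hence compact on $L^2(\partial\Omega)$. The difference $[K,\varphi] - T_\epsilon$ has kernel supported in $\{|x-y|<\epsilon\}$ and bounded by $L/(4\pi|x-y|)$; using polar coordinates on $\partial\Omega$, $\int_{|x-y|<\epsilon}|x-y|^{-1}\,dS(y) \leq C\epsilon$ uniformly in $x$ (and by symmetry in $y$), so Schur's test gives $\|[K,\varphi]-T_\epsilon\|_{L^2\to L^2} \leq C'\epsilon$. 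As compact operators are closed under operator-norm limits, $[K,\varphi]$ is compact on $L^2(\partial\Omega)$.

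For $\alpha \in (0,1)$ I would invoke the extrapolation of compactness recalled in Section~\ref{subsec:extrapolation}. Pick $\beta$ with $\alpha < \beta < 1$. The operator $K$ is bounded on $L^2_\beta(\partial\Omega)$: locally near each vertex it is described by the Mellin tensor-product structure of Section~\ref{subsec:mellinconv} together with Lemma~\ref{PropertiesH}.i, the off-diagonal pieces in the partition of unity of Section~\ref{subsec:weightedL2} have smooth kernels and so are bounded trivially. Multiplication by the bounded function $\varphi$ is obviously bounded on $L^2_\beta$. Hence $[K,\varphi]$ is bounded on $L^2_\beta(\partial\Omega)$ and, by the previous paragraph, compact on $L^2(\partial\Omega)$. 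Applying Cwikel's theorem to the couple $(L^2_\beta(\partial\Omega), L^2(\partial\Omega))$ at $\theta = 1 - \alpha/\beta \in (0,1)$, and identifying the resulting interpolation space via the standard real-interpolation identity for weighted $L^2$-scales, yields compactness on $L^2_\alpha(\partial\Omega)$.

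The hard part will be justifying the interpolation identity $(L^2_\beta(\partial\Omega), L^2(\partial\Omega))_\theta = L^2_{(1-\theta)\beta}(\partial\Omega)$ in the context of the partition-of-unity definition of $L^2_\alpha(\partial\Omega)$. This should reduce to the standard real-interpolation identity for weighted $L^2$-spaces on each tangent cone $\Gamma_i$, but one must verify compatibility with the sum-of-local-norms definition and check that the weights $q^{-\alpha}$ on different cones assemble consistently. If this route proves more painful than expected, an alternative would be a direct weighted Schur argument on the truncated kernels, exploiting the fact that $\alpha < 1$ is exactly the threshold for integrability of $q^{-\alpha}$ against the surface measure near the edges.
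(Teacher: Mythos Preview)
Your kernel estimate and the truncation-plus-Schur argument for compactness on $L^2(\partial\Omega, dS)$ are fine, but the extrapolation route has a genuine gap: $L^2_0(\partial\Omega)$ as defined in the paper is \emph{not} the usual $L^2(\partial\Omega, dS)$. On each tangent cone, $L^2_0(\Gamma_i) = L^2(\R_+\times\gamma_i,\, dr\,d\omega)$, whereas the surface-measure space is $L^2(\R_+\times\gamma_i,\, r\,dr\,d\omega)$; near the vertex $r=0$ these are inequivalent, with $L^2_0 \subsetneq L^2(dS)$ strictly. Consequently, real interpolation of the couple $(L^2_\beta(\partial\Omega), L^2(\partial\Omega,dS))$ yields, on each cone, $L^2(q^{-\beta(1-\theta)} r^{\theta}\, dr\,d\omega)$, which is never $L^2_\alpha(\partial\Omega)$ for $\alpha>0$. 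So your Cwikel step does not land in the right space, and the ``hard part'' you flag is not merely a technicality but an actual obstruction to this line of argument. To rescue the interpolation idea you would first need compactness on $L^2_0(\partial\Omega)$ itself, and your $dS$-based Schur computation does not give that.

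The paper bypasses interpolation entirely and argues directly on $L^2_\alpha(\Gamma)$ for each fixed $\alpha\in[0,1)$. After the same truncation at $|x-y|=\varepsilon$ (the far piece is Hilbert--Schmidt on $L^2_\alpha$), the near-diagonal piece is controlled by writing $|C| \le \varepsilon^{1/2}\, r' q(\omega')^\alpha/|r\omega-r'\omega'|^{3/2}$ and showing that the integral operator $T$ with kernel $r' q(\omega')^\alpha/|r\omega-r'\omega'|^{3/2}$ is \emph{bounded} on $L^2_\alpha(\Gamma)$. This boundedness is obtained by conjugating $T$ to an operator $\widetilde T$ on $L^2(r^{-1}dr\, q^{-1}d\omega)$ and checking the $L^1$ and $L^\infty$ bounds for a Riesz--Thorin interpolation; the key calculation reduces to verifying that
\[
\sup_{\omega'}\int_\gamma \Bigl(\tfrac{q(\omega)}{q(\omega')}\Bigr)^{\frac{1-\alpha}{2}} \frac{q(\omega')}{|\omega-\omega'|^{1/2}}\,\frac{d\omega}{q(\omega)} < \infty,
\]
which holds precisely because $\alpha<1$. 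This is essentially the ``direct weighted Schur'' fallback you mention at the end; that alternative is the right path, and it is what the paper carries out.
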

\begin{proof}
By a partition of unity it is sufficient to prove the statement for $\psi_1 [K,\varphi] \psi_2$, with a polyhedral cone $\Gamma$ in place of $\partial \Omega$ and $\psi_1$, $\psi_2$, $\varphi$ being compactly supported Lipschitz functions on $\Gamma$. Then
$$\psi_1 [K,\varphi] \psi_2 f(r\omega) = \int_{\Gamma} C(r\omega, r'\omega') f(r'\omega') \, dr' \, q(\omega')^{-\alpha} d\omega',$$
where the kernel $C$ is supported in $[0, A] \times \gamma \times [0, A] \times \gamma$ for some $0 < A < \infty$ and satisfies
$$| C(r\omega, r'\omega') | \lesssim \frac{r' q(\omega')^\alpha }{|r\omega- r'\omega'|}.$$
This immediately implies that the integral operator with kernel $C(r\omega, r'\omega') \chi_{\{|r\omega- r'\omega'| > \varepsilon\}}$ is Hilbert-Schmidt on $L^2_\alpha(\Gamma)$ for every $\varepsilon > 0$,
$$\int_{|r\omega- r'\omega'| > \varepsilon} |C(r\omega, r'\omega')|^2 \, dr \, q(\omega)^{-\alpha} d\omega \, dr' \, q(\omega')^{-\alpha} d\omega' < \infty. $$
For $|r\omega- r'\omega'| < \varepsilon$ we have that
$$| C(r\omega, r'\omega') | \lesssim \varepsilon^{1/2} \underbrace{\frac{r' q(\omega')^\alpha }{|r\omega- r'\omega'|^{3/2}}}_{T(r\omega, r'\omega')}.$$
It is thus sufficient to prove that the integral operator $T$ with kernel $T(r\omega, r'\omega')$ is bounded on $L^2([0,A] \times \gamma, dr \, q(\omega)^{-\alpha} d\omega)$. By applying the unitary transformation
$$M_{r^{\frac{1}{2}}} M_{q(\omega)^{\frac{1-\alpha}{2}}} \colon L^2([0,A] \times \gamma, dr \, q(\omega)^{-\alpha} d\omega) \to L^2([0,A] \times \gamma, r^{-1} dr \, q(\omega)^{-1} d\omega),$$
 it is equivalent to prove that the integral operator $\widetilde{T}$,
$$\widetilde{T} f(r\omega) = \int_{\gamma} \int_0^A \widetilde{T}(r\omega, r'\omega') f(r' \omega') \, \frac{dr'}{r'} \, \frac{d\omega'}{q(\omega')},$$
with kernel
$$\widetilde{T}(r\omega, r'\omega') = \left(\frac{r}{r'}\right)^{\frac{1}{2}} \left(\frac{q(\omega)}{q(\omega')}\right)^{\frac{1-\alpha}{2}} \frac{(r')^2 q(\omega') }{|r\omega- r'\omega'|^{3/2}},$$
is bounded on $L^2([0,A] \times \gamma, r^{-1} dr \, q(\omega)^{-1} d\omega)$. To do this we verify that $\widetilde{T}$ is bounded on $L^1 = L^1([0,A] \times \gamma, r^{-1} dr \, q(\omega)^{-1} d\omega)$ and $L^\infty$ and apply the Riesz-Thorin interpolation theorem.

To see that it is bounded on $L^1$, it is sufficient to see that 
$$\sup_{r', \omega'} \int_{\gamma} \int_0^A \widetilde{T}(r\omega, r'\omega') \, \frac{dr}{r} \, \frac{d\omega}{q(\omega)} < \infty.$$
By the change of variable $h = r/r'$ we have that
$$\int_{\gamma} \int_0^A \widetilde{T}(r\omega, r'\omega') \, \frac{dr}{r} \, \frac{d\omega}{q(\omega)} \leq A^{1/2} \int_{\gamma} \left(\frac{q(\omega)}{q(\omega')}\right)^{\frac{1-\alpha}{2}} q(\omega') \int_0^\infty h^{\frac{1}{2}} \frac{1}{|h \omega- \omega'|^{3/2}}  \, \frac{dh}{h} \, \frac{d\omega}{q(\omega)},$$
where
$$\int_0^\infty h^{\frac{1}{2}} \frac{1}{|h \omega- \omega'|^{3/2}}  \, \frac{dh}{h} = \int_0^\infty h^{\frac{1}{2}} \frac{1}{(h^2 - 2h \omega \cdot \omega' + 1)^{3/4}}  \, \frac{dh}{h} \lesssim \frac{1}{(1-\omega \cdot \omega')^{\frac{1}{4}}} = \frac{2^{1/4}}{|\omega - \omega'|^{1/2}}.$$
Thus we are left to show that
\begin{equation} \label{eq:gammafinite}
\sup_{\omega'} \int_{\gamma} \left(\frac{q(\omega)}{q(\omega')}\right)^{\frac{1-\alpha}{2}} \frac{q(\omega')}{|\omega - \omega'|^{1/2}} \, \frac{d\omega}{q(\omega)} < \infty.
\end{equation}
Since $0 \leq \alpha < 1$, we are only concerned with the situation that both $\omega$ and $\omega'$ are close to the same corner of $\gamma$, and we then introduce arc-length parametrization. Thus we have to verify that
$$\sup_{0 < s' < 1} \int_{0}^1 \left(\frac{s}{s'}\right)^{\frac{1-\alpha}{2}} \frac{s'}{|s-s'|^{1/2}} \, \frac{ds}{s} < \infty$$
and
$$\sup_{0 < s' < 1} \int_{0}^1 \left(\frac{s}{s'}\right)^{\frac{1-\alpha}{2}} \frac{s'}{(s+s')^{1/2}} \, \frac{ds}{s} < \infty,$$
corresponding to whether $\omega$ and $\omega'$ lie on the same arc or not. By the change of variable $t = s/s'$, we have
$$ \int_{0}^1 \left(\frac{s}{s'}\right)^{\frac{1-\alpha}{2}} \frac{s'}{|s\pm s'|^{1/2}} \, \frac{ds}{s} \leq (s')^{\frac{1}{2}}\int_{0}^{1/s'} t^{\frac{1-\alpha}{2}} \frac{1}{|t \pm 1|^{1/2}} \, \frac{dt}{t} \lesssim (s')^{\frac{1}{2}}(1 + |\log s'|),$$
where we used that $0 \leq \alpha < 1$. We conclude that \eqref{eq:gammafinite} holds.

The boundedness of $\widetilde{T}$ on $L^\infty$ can be proved similarly.
\end{proof}

We now present our localization theorem for $L^2_\alpha(\partial \Omega)$.
	\begin{theorem}\label{locarg1}
Let $0 \leq \alpha < 1$ and let $K$ be the Neumann--Poincar\'e operator of a polyhedron $\partial \Omega$. For each vertex of $\partial \Omega$, let $K_i$ denote the Neumann--Poincar\'e operator of the corresponding tangent polyhedral cone $\Gamma_i$, $i = 1, \ldots, I$.  

Then, for $\lambda \in \mathbb{C}$, $K - \lambda$ is Fredholm on $L^2_\alpha(\partial\Omega)$ if and only if $K_i - \lambda$ is invertible on $L^2_\alpha(\Gamma_i)$ for every $i=1,\dots,I$. That is,
$$\sigma_{\ess}(K, L^2_\alpha(\partial\Omega)) = \bigcup_{1 \leq i \leq I} \sigma(K_i, L^2_\alpha(\Gamma_i)).$$
The spectra $\sigma(K_i, L^2_\alpha(\Gamma_i))$ have been described in Theorem~\ref{contessspecK}.
	\end{theorem}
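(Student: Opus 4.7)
The plan is to reduce the Fredholm problem on $\partial\Omega$ to the invertibility problems on the tangent cones by means of the partition of unity $\{\varphi_i\}_{i=1}^I$ from Section~\ref{subsec:weightedL2}, combined with the commutator estimate of Lemma~\ref{commcompact} and the constructive (singular) Weyl sequences produced in Theorem~\ref{specK}.

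\textbf{Step 1 (Localization to tangent cones).} Since $\sum_i \varphi_i \equiv 1$ near $\partial\Omega$ and each $[K,\varphi_i]$ is compact on $L^2_\alpha(\partial\Omega)$ by Lemma~\ref{commcompact}, I would first write
\begin{equation*}
K = \sum_{1\le i\le I} \varphi_i K \varphi_i + \sum_{i\neq j} \varphi_i K \varphi_j + \text{compact}.
\end{equation*}
For $i\neq j$, the kernel of $\varphi_i K \varphi_j$ is supported in a region where the two cut-offs avoid a common vertex, so the kernel is bounded by a weakly singular (integrable) kernel times a bounded weight; the argument in the proof of Lemma~\ref{commcompact}, applied with $|x-y|^{-2}$ in place of $|x-y|^{-3/2}$, shows that these cross terms are compact on $L^2_\alpha(\partial\Omega)$. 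Next, because $\supp \varphi_i \subset \Gamma_i$ and $\partial\Omega$ locally coincides with $\Gamma_i$ near $\widetilde{E}_i$, the kernel of $\varphi_i K \varphi_i$ agrees with that of $\varphi_i K_i \varphi_i$ in a neighborhood of the vertex, and the difference is again weakly singular and supported away from $\widetilde{E}_i$, hence compact. Using the identification $f\mapsto (\varphi_i f)_i$ into $\bigoplus_i L^2_\alpha(\Gamma_i)$, we conclude
\begin{equation*}
K - \lambda \; \sim \; \bigoplus_{1\le i\le I} (\varphi_i K_i \varphi_i - \lambda \varphi_i^2) \pmod{\text{compact}}.
\end{equation*}

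\textbf{Step 2 (Sufficiency).} If each $K_i - \lambda$ is invertible on $L^2_\alpha(\Gamma_i)$, I would construct a parametrix by choosing auxiliary cut-offs $\widetilde{\varphi}_i$ with $\widetilde{\varphi}_i \equiv 1$ on $\supp \varphi_i$ and $\supp \widetilde{\varphi}_i \subset \Gamma_i$, and setting
\begin{equation*}
R = \sum_{1\le i\le I} \widetilde{\varphi}_i (K_i - \lambda)^{-1} \varphi_i.
\end{equation*}
Using Step~1 and a further application of Lemma~\ref{commcompact} (this time on each $\Gamma_i$) to commute the cut-offs through $(K_i - \lambda)^{-1}$, one verifies that $R(K-\lambda) - I$ and $(K-\lambda)R - I$ are compact, so $K-\lambda$ is Fredholm.

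\textbf{Step 3 (Necessity).} Conversely, assume $K_i - \lambda$ fails to be invertible on $L^2_\alpha(\Gamma_i)$ for some $i$. By Theorem~\ref{specK}, applied with a sufficiently small $d>0$ (chosen so that $[0,d]\times \gamma_i$ lies in the common region where $\Gamma_i$ and $\partial\Omega$ coincide), there is a sequence $(w_n)$ with $\supp w_n \subset [0, 2^{-n}]\times \gamma_i$ which is either a Weyl sequence for $K_i - \lambda$ in $L^2_\alpha(\Gamma_i)$ or a Weyl sequence for $K_i^\dagger - \bar\lambda$ in $L^2_{-\alpha}(\Gamma_i)$. In the former case, transporting $(w_n)$ to $\partial\Omega$ via the local identification and invoking Step~1 gives $\|(K-\lambda)w_n\|_{L^2_\alpha(\partial\Omega)} \to 0$ together with $\|w_n\|_{L^2_\alpha(\partial\Omega)}=1$ and $w_n\rightharpoonup 0$, so $K-\lambda$ is not Fredholm. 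In the latter case the same construction yields a Weyl sequence for $K^\dagger - \bar\lambda$ on $L^2_{-\alpha}(\partial\Omega)$; since $L^2_{-\alpha}(\partial\Omega)$ is the $L^2$-dual of $L^2_\alpha(\partial\Omega)$, non-Fredholmness of $K^\dagger - \bar\lambda$ on $L^2_{-\alpha}$ is equivalent to non-Fredholmness of $K-\lambda$ on $L^2_\alpha$.

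The main technical obstacle is Step~1, specifically the justification that the off-diagonal pieces $\varphi_i K \varphi_j$ and the difference between $\varphi_i K \varphi_i$ and its tangent-cone counterpart are compact. The delicate point is that the weight $q^{-\alpha}$ blows up at corners of the faces (not just at vertices), so one must keep track of how the cut-offs interact with the edge singularities; however, since these troublesome regions are all disjoint from the relevant singular sets of the kernels involved, the argument reduces, as in Lemma~\ref{commcompact}, to an $L^1$--$L^\infty$ interpolation estimate for an integral kernel with an integrable singularity of order strictly less than $2$.
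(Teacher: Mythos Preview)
Your Step~3 is essentially the paper's argument for necessity and is correct. The problem lies in Step~1, on which your Step~2 explicitly relies.

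The claim that $\varphi_i K \varphi_j$ is compact for $i\neq j$ is false whenever $\widetilde{E}_i$ and $\widetilde{E}_j$ are joined by an edge of $\partial\Omega$. Since $\{\varphi_i\}$ is a partition of unity, the supports of $\varphi_i$ and $\varphi_j$ must overlap along that edge, and on the overlap the kernel of $K$ retains its full edge singularity of order $|x-y|^{-2}$; no Lipschitz cancellation of the type exploited in Lemma~\ref{commcompact} is available. Concretely, the commutator lemma gives $\varphi_i K \varphi_j \equiv \varphi_i\varphi_j K$ modulo compacts, and summing over $i\neq j$ yields $\bigl(1-\sum_i\varphi_i^2\bigr)K$; since $1-\sum_i\varphi_i^2 = 2\sum_{i<j}\varphi_i\varphi_j$ is strictly positive on a neighbourhood of each edge midpoint, this operator cannot be compact. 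Your appeal to ``a weakly singular (integrable) kernel'' and to the proof of Lemma~\ref{commcompact} ``with $|x-y|^{-2}$ in place of $|x-y|^{-3/2}$'' therefore fails: $|x-y|^{-2}$ is not integrable on a two-dimensional surface, and the assertion in your final paragraph that ``these troublesome regions are all disjoint from the relevant singular sets'' is exactly what breaks down.

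The paper avoids this by never attempting a direct-sum decomposition of $K$. For sufficiency it proves the upper semi-Fredholm estimate directly: from $\|f\| \simeq \sum_i \|\varphi_i f\|_{L^2_\alpha(\Gamma_i)}$ and the invertibility of each $K_i-\lambda$ one obtains $\|f\| \lesssim \sum_i \|(\lambda-K_i)\varphi_i f\|_{L^2_\alpha(\Gamma_i)}$, and then uses only (a) compactness of the commutators $[K,\varphi_i]$ on $\partial\Omega$, and (b) compactness of $(1-\eta_i)K_i\varphi_i$ on $\Gamma_i$, where $\eta_i\equiv 1$ on $\supp\varphi_i$ so that $1-\eta_i$ and $\varphi_i$ have \emph{genuinely disjoint} supports. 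The point is that the edge overlaps between different $\varphi_i$ are absorbed into the cone operators $K_i$ themselves (each $K_i$ already sees the full edges emanating from $\widetilde{E}_i$), rather than being isolated as cross terms that one hopes are compact. Your parametrix $R$ can in fact be pushed through, but the verification that $(K-\lambda)R - I$ is compact must proceed via this disjoint-support device on each $\Gamma_i$, not via off-diagonal compactness on $\partial\Omega$.
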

	\begin{remark} \rm
	One would like to accompany this theorem with a Kellogg-type argument for $K \colon L^2_\alpha(\partial \Omega) \to L^2_\alpha(\partial \Omega)$, cf. Lemma~\ref{eigenvH*}, but we have been unable to produce such a result.
	\end{remark}
	\begin{proof}
		 Let $\{\varphi_i\}_{i=1}^I$ be a partition of unity of $\partial \Omega$, as described in Section~\ref{subsec:weightedL2}, and let $\{\eta_i\}_{1\le i\le I}$ be a family of Lipschitz functions on $\partial\Omega$ such that $\eta_i\equiv 1$ in a neighbourhood of $\supp \varphi_i$ and $\eta_i \equiv 0$ on $\partial \Omega \setminus \Gamma_i$. As usual, by very slight abuse of notation, we understand functions such as $\varphi_i$ and $\eta_i$ as functions on both $\partial \Omega$ and $\Gamma_i$. 
		
	    In one direction, we argue along the lines of \cite[Lemma~1]{Mitrea99}. Suppose that $\lambda I-K_i$ is invertible on $L^2_\alpha(\Gamma_i)$, for every $i=1,\dots,I.$ Then, for $f\in L^2_\alpha(\partial\Omega)$,
	    \begin{align*}
	       \|f\|_{L^2_\alpha(\partial\Omega)}&\simeq\sum_{1\le i\le I}\|\varphi_i f\|_{L^2_\alpha(\Gamma_i)}\lesssim \sum_{1\le i\le I}\|(\lambda I- K_i)\varphi_i f\|_{L^2_\alpha(\Gamma_i)}\\
	       &\lesssim \sum_{1\le i\le I}\|{\eta_i}(\lambda I- K)\varphi_i f\|_{L^2_\alpha(\Gamma_i)}+  \sum_{1\le i\le I}\|{(1-\eta_i)}(\lambda I- K_i)\varphi_i f\|_{L^2_\alpha(\Gamma_i)}\\
	      	       &\lesssim  \sum_{1\le i\le I}\|(\lambda I- K)\varphi_i f\|_{L^2_\alpha(\partial\Omega)}+ \sum_{1\le i\le I}\|{(1-\eta_i)} K_i\varphi_i f\|_{L^2_\alpha(\Gamma_i)}.
	    \end{align*}
	   Since $(1-\eta_i)$ and $\varphi_i$ have disjoint supports, the operator ${(1-\eta_i)}K_i\varphi_i:{L^2_\alpha(\partial\Omega)}\rightarrow L^2_\alpha(\Gamma_i)$ is Hilbert--Schmidt. Furthermore, by Lemma \ref{commcompact}, the commutator $[K,\varphi_i]=K\varphi_i-\varphi_iK:{L^2_\alpha(\partial\Omega)}\rightarrow {L^2_\alpha(\partial\Omega)}$ is compact, for each $i=1,\dots,I.$ Therefore, there is a Hilbert space $\mathcal{H}$ and a compact operator $C \colon {L^2_\alpha(\partial\Omega)}\rightarrow \mathcal{H}$ such that
		    \begin{align*}
	       \|f\|_{L^2_\alpha(\partial\Omega)}&\lesssim \sum_{1\le i\le I}\|\varphi_i(\lambda I- K) f\|_{L^2_\alpha(\partial\Omega)}+ \|C f\|_{\mathcal{H}}\\
	       &\lesssim \|(\lambda I- K) f\|_{L^2_\alpha(\partial\Omega)} + \|C f\|_{\mathcal{H}}.
	    \end{align*}
	   This means that $\lambda I-K$ is upper semi-Fredholm on $L^2_\alpha(\partial\Omega)$. Applying the exact same argument to the $L^2_0(\partial \Omega)$-adjoint $K^\dagger$ shows that also $\bar{\lambda} I - K^\dagger$ is upper semi-Fredholm on $(L^2_\alpha(\partial\Omega))' \simeq L^2_{-\alpha}(\partial\Omega)$. Therefore $\lambda I - K$ is Fredholm.
	   
 For the converse, suppose that  there exists  $i_0\in \{1,\dots,I\}$ such that $\lambda I-K_{i_0}$ is not invertible on $L^2_\alpha(\Gamma_{i_0})$. By Theorem~\ref{specK}, there is a Weyl sequence $(w_n)$ for either $\lambda I-K_{i_0}$ or $\bar{\lambda} I-K_{i_0}^\dagger$, and we can choose the functions $w_n$ to have arbitarily small support around the vertex of $\Gamma_{i_0}$. We treat the former case; the argument for the second case is identical. 
 
 As a sequence $(w_n) \subset L^2_\alpha(\partial \Omega)$, $\|w_n\|_{L^2(\partial \Omega)} = 1$ for every $n$ and $w_n \to 0$ weakly. Furthermore, assuming we chose $w_n$ to have sufficiently small support,
$$(\lambda I - K)w_n = \eta_{i_0} (\lambda I - K) \varphi_{i_0} w_n - (1-\eta_{i_0}) K \varphi_{i_0} w_n, \; \textrm{ on } \partial \Omega.$$
Here $ (1-\eta_{i_0}) K\varphi_{i_0} w_n \to 0$ in norm as $n \to \infty$, since $(1-\eta_{i_0}) K \varphi_{i_0}$ is compact on $L^2_\alpha(\partial \Omega)$ and $w_n \to 0$ weakly, while
$$\eta_{i_0} (\lambda I - K ) \varphi_{i_0}w_n = \eta_{i_0} (\lambda I- K_{i_0} )w_n \to 0$$ 
because $(w_n)$ is a Weyl sequence for $(\lambda I - K_{i_0})$. Thus $(w_n)$ is a Weyl sequence also for $\lambda I - K$, and therefore $\lambda I - K$ is not Fredholm.  
	\end{proof}


	\section{Spectral theory on the energy space} \label{sec:energy}
	\subsection{The energy space of a polyhedral cone} \label{sec:energycone}	
	Let $\Gamma$ be a Lipschitz polyhedral cone. In this subsection we study the energy space $\mathcal{E}(\Gamma)$ defined in Section~\ref{subsec:energyspace}. 
	
	In Section~\ref{subsec:ident} we introduced the Mellin operator $\Sg_0 = M_{r^{-1/2}} \Sg M_{r^{-1/2}}$ on $\Gamma$. Comparing its convolution kernel \eqref{kernelS0} with Proposition~\ref{prop:fundsol}, we observe, for $\xi\in\R$, that $\MM{\Sg_0}(i\xi+1)$
	coincides with the single layer potential on $\gamma$ for $-\Delta_{\hat{\gamma}} + 1/4 + \xi^2$,
	$$\MM{\Sg_0}(i\xi+1) = \tilde{S}(i\xi).$$
	We therefore introduce the space $H^{-1/2}_\xi({\gamma}) = \mathcal{E}(\gamma, -\Delta_{\hat{\gamma}} + 1/4 + \xi^2)$ as the completion of $L^2(\gamma)$ in the norm
	$$\|g\|_{H^{-1/2}_\xi({\gamma})}^2 = \langle \tilde{S}(i\xi) g, g \rangle_{L^2(\gamma)},$$
	recalling from Remark~\ref{rmk:energynorm} that 
	\begin{equation} \label{eq:Sxinorm}
	\|g\|_{H^{-1/2}_\xi({\gamma})}^2 = \int_{S^2} |\nabla \tilde{\Sg}(i\xi) g(\omega)|^2 + (1/4 + \xi^2) |\tilde{\Sg}(i\xi) g(\omega)|^2 \, dS(\omega).
	\end{equation}
	
	The connection with $\mathcal{E}(\Gamma)$ arises from the computation
\begin{align*}\langle \Sg f,g\rangle_{L^2(\Gamma)}&= \langle M_{r} \Sg_0 M_{r^{-1}}M_{r^{3/2}}f, M_{r^{3/2}} g \rangle_{L^2( \mathbb{R}_+,\frac{dr}{r}) \otimes L^2(\gamma)} \\
&= \frac{1}{2\pi}\int_{\R} \langle \tilde{S}(i\xi) \mathcal{M}f(i\xi + 3/2), \mathcal{M}g(i\xi+3/2) \rangle_{L^2(\gamma)} \, d\xi \\
&= \frac{1}{2\pi} \int_{\R} \langle \mathcal{M}f(i\xi + 3/2), \mathcal{M}g(i\xi+3/2) \rangle_{H^{-1/2}_\xi({\gamma})} \, d\xi,
\end{align*}
which is not hard to justify for $f,g \in C^\infty_c( \cup_j F_j).$ By a density argument, $\frac{1}{\sqrt{2\pi}}\mathcal{M} M_{r^{3/2}}$ therefore induces a unitary map
$$\frac{1}{\sqrt{2\pi}}\mathcal{M} M_{r^{3/2}} \colon \mathcal{E}(\Gamma) \to L^2(\mathbb{R}, d\xi) \otimes \mathcal{E}(\gamma, -\Delta_{\hat{\gamma}} + 1/4 + \xi^2).$$

We now focus our attention on the spaces $\mathcal{E}(\gamma, -\Delta_{\hat{\gamma}} + 1/4 + \xi^2)$. Being a single layer potential operator, we already know that $\tilde{S}(i\xi) \colon L^2(\gamma) \to H^1(\gamma)$ is an isomorphism for every $\xi \in \R$ \cite[Proposition 7.5]{MT99}. We first establish that this isomorphism depends continuously on $\xi$.
\begin{lemma} \label{lem:Sxicont}
	The map 
	$$\R \ni \xi \mapsto \tilde{S}(i\xi) \in \mathcal{L}(L^2(\gamma), H^1(\gamma))$$
	is uniformly continuous.
\end{lemma}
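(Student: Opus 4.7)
The approach is to prove the stronger statement that $\xi \mapsto \tilde{S}(i\xi)$ is Lipschitz continuous from $\R$ into $\mathcal{L}(L^2(\gamma), H^1(\gamma))$. By Proposition~\ref{prop:fundsol}, $\MM\Phi_0(i\xi+3/2) = L_\xi^{-1}$, where $L_\xi := -\Delta_{S^2} + 1/4 + \xi^2$ is a positive self-adjoint operator on $L^2(S^2)$. Thus $\tilde{\Sg}(i\xi)g = L_\xi^{-1}(g\,\delta_\gamma)$ is the full single layer potential and $\tilde{S}(i\xi)g = \gamma_D\,\tilde{\Sg}(i\xi)g$ is its trace to $\gamma$. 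The resolvent identity yields, for any $\xi, \xi' \in \R$,
$$\tilde{S}(i\xi) - \tilde{S}(i\xi') = (\xi'^2 - \xi^2)\,\gamma_D L_\xi^{-1}\tilde{\Sg}(i\xi'),$$
which upon letting $\xi' \to \xi$ gives $\frac{d}{d\xi}\tilde{S}(i\xi) = -2\xi\,\gamma_D L_\xi^{-1}\tilde{\Sg}(i\xi)$. It therefore suffices to bound this derivative in $\mathcal{L}(L^2(\gamma), H^1(\gamma))$ uniformly in $\xi \in \R$.

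I would collect the following uniform-in-$\xi$ estimates. \emph{First}, the single layer embedding $L^2(\gamma) \ni g \mapsto g\,\delta_\gamma \in H^{-1}(S^2)$ is bounded. \emph{Second}, the energy identity $\langle L_\xi u, u\rangle_{L^2(S^2)} = \|\nabla u\|_{L^2(S^2)}^2 + (1/4+\xi^2)\|u\|_{L^2(S^2)}^2 \geq \tfrac{1}{4}\|u\|_{H^1(S^2)}^2$ gives $\|L_\xi^{-1}\|_{H^{-1}(S^2) \to H^1(S^2)} \leq 4$. \emph{Third}, elliptic regularity yields the uniform bound $\|L_\xi^{-1}\|_{L^2(S^2) \to H^2(S^2)} \leq C$, via $\|\Delta u\|_{L^2} \leq \|f\|_{L^2} + (1/4+\xi^2)\|u\|_{L^2} \leq 2\|f\|_{L^2}$ for $u = L_\xi^{-1}f$. \emph{Fourth}, the trace $\gamma_D \colon H^2(S^2) \to H^1(\gamma)$ is bounded on the Lipschitz curve $\gamma$: by Sobolev embedding $H^2(S^2) \hookrightarrow C(S^2)$, so any $u \in H^2(S^2)$ is continuous across the corners of $\gamma$, while on each smooth arc $\gamma_j$ one has $u|_{\gamma_j} \in H^{3/2}(\gamma_j) \hookrightarrow H^1(\gamma_j)$ by the standard trace theorem.

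Combining the first two estimates with the trace $H^1(S^2) \to L^2(\gamma)$ yields that $\tilde{S}(i\xi)\colon L^2(\gamma) \to L^2(\gamma)$ is uniformly bounded in $\xi$, which together with \eqref{eq:Sxinorm} produces the crucial decay
$$\|\tilde{\Sg}(i\xi)g\|_{L^2(S^2)} \leq C(1/4+\xi^2)^{-1/2}\|g\|_{L^2(\gamma)}.$$
Applying the third and fourth estimates then yields
$$\|\tfrac{d}{d\xi}\tilde{S}(i\xi)g\|_{H^1(\gamma)} \leq 2|\xi|\,\|L_\xi^{-1}\tilde{\Sg}(i\xi)g\|_{H^2(S^2)} \leq C|\xi|(1/4+\xi^2)^{-1/2}\|g\|_{L^2(\gamma)} \leq C\|g\|_{L^2(\gamma)},$$
uniformly in $\xi\in\R$. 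Integrating produces the Lipschitz bound $\|\tilde{S}(i\xi) - \tilde{S}(i\xi')\|_{\mathcal{L}(L^2(\gamma), H^1(\gamma))} \leq C|\xi - \xi'|$, which is stronger than the claimed uniform continuity. The main obstacle is the interplay between the growing factor $|\xi|$ coming from the derivative and the $(1/4+\xi^2)^{-1/2}$ decay of $\tilde{\Sg}(i\xi)g$ in $L^2(S^2)$, elegantly resolved by the energy identity \eqref{eq:Sxinorm}; verifying the trace theorem $H^2(S^2) \to H^1(\gamma)$ at the corners of the Lipschitz polygon $\gamma$ is a secondary technical point.
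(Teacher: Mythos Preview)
Your argument is correct and arrives at the same Lipschitz bound as the paper, but by a genuinely different route. The paper works directly with the explicit Mellin-integral kernel of $\tilde{S}(i\xi)$: it writes $(\tilde{S}(i\xi))(\omega,\omega') = \tfrac{1}{4\pi}\int_0^\infty t^{i\xi+1/2}|t\omega-\omega'|^{-1}\,\tfrac{dt}{t}$, bounds $|t^{i\xi}-t^{i\xi'}| \le |\xi-\xi'|\,|\log t|$, and then shows by elementary estimates that both $(\tilde{S}(i\xi)-\tilde{S}(i\xi'))(\omega,\omega')$ and its arc-length derivative $\partial_s(\tilde{S}(i\xi)-\tilde{S}(i\xi'))(\omega,\omega')$ are $O(|\xi-\xi'|)$ uniformly in $\omega,\omega'$. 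This yields the $L^2\to H^1$ Lipschitz bound by a direct Schur-type estimate.

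You instead exploit the operator-theoretic identification $\tilde{S}(i\xi) = \gamma_D L_\xi^{-1}(\,\cdot\,\delta_\gamma)$ and the resolvent identity, reducing to uniform mapping bounds for $L_\xi^{-1}$ (energy estimate $H^{-1}\to H^1$, elliptic regularity $L^2\to H^2$) together with the trace $H^2(S^2)\to H^1(\gamma)$; the potentially troublesome factor $2|\xi|$ is absorbed by the $L^2(S^2)$-decay of $\tilde{\Sg}(i\xi)g$ coming from \eqref{eq:Sxinorm}. Your approach is more conceptual and would generalize more readily to related single layer operators where explicit kernel asymptotics are unavailable; the paper's approach is more self-contained, staying within the explicit Mellin calculus used throughout and avoiding any appeal to elliptic regularity or trace theorems on $S^2$.
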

\begin{proof}
	Recall from Section~\ref{subsec:ident} that the integral kernel of $\tilde{S}(i\xi)$ is given by
	\begin{equation*}
	(\tilde{S}(i\xi))(\omega, \omega') = (\MM \mathcal{S}_0(i\xi+1))(\omega, \omega')=\frac{1}{4\pi}\int_0^\infty \frac{t^{i\xi+1/2}}{|t\omega-\omega'|}\frac{dt}{t}, \qquad \omega, \omega' \in \gamma.
	\end{equation*}
	For $\xi, \xi' \in \R$, we have that
	\begin{align*}
	\left| \int_0^\infty \frac{t^{i\xi + 3/2} - t^{i\xi' + 3/2}}{(t^2 - 2t\omega \cdot \omega' + 1)^{3/2}} \, \frac{dt}{t} \right| &\leq |\xi - \xi'| \int_0^\infty \frac{t^{3/2}|\log t|}{(t^2 - 2t\omega \cdot \omega' + 1)^{3/2}} \, \frac{dt}{t} \\
	&\lesssim \frac{|\xi - \xi'|}{\sqrt{1 - \omega \cdot \omega'}} = \sqrt{2}\frac{|\xi - \xi'|}{|\omega - \omega'|}, \qquad \omega, \omega' \in \gamma.
	\end{align*} 
	Consequently, differentiating in the arc-length parameter $s$, $\omega = \omega(s)$, we find, for $\omega \in \gamma \setminus \{E_1, \ldots, E_J\}$ and $\omega' \neq \omega$, that
	$$|\partial_s (\tilde{S}(i\xi) - \tilde{S}(i\xi'))(\omega, \omega')| \lesssim |\xi - \xi'| \frac{|\partial_s \omega \cdot \omega'|}{|\omega - \omega'|} \lesssim |\xi - \xi'|.$$
	Through similar but simpler reasoning, the kernel itself satisfies the same estimate,
	$$(\tilde{S}(i\xi) - \tilde{S}(i\xi'))(\omega, \omega') \lesssim |\xi - \xi'|.$$
	It follows that
	\begin{equation*}
	\|\tilde{S}(i\xi) - \tilde{S}(i\xi')\|_{\mathcal{L}(L^2(\gamma), H^1(\gamma))} \lesssim |\xi - \xi'|. \qedhere
	\end{equation*}
\end{proof}

$\tilde{S}(i\xi) \colon L^2(\gamma) \to H^1(\gamma)$ is an isomorphism and $\tilde{S}(i\xi) \colon L^2(\gamma) \to L^2(\gamma)$ is a non-negative operator, by \eqref{eq:Sxinorm}. Duality therefore yields that $\tilde{S}(i\xi)\colon H^{-1}(\gamma)\to L^2(\gamma)$ is an isomorphism, and thus that
$$\langle \tilde{S}(i\xi)^2 u, u \rangle_{L^2(\gamma)} = \|\tilde{S}(i\xi) u\|_{L^2(\gamma)}^2 \simeq \|u \|_{H^{-1}(\gamma)}^2,$$
with implied constants depending on $\xi$. By interpolation, $\tilde{S}(i\xi) \colon H^{-1/2}(\gamma) \to H^{1/2}(\gamma)$ is also an isomorphism. Furthermore, interpolation yields that
$$\langle\tilde{S}(i\xi) u, u \rangle_{L^2(\gamma)} = \|\tilde{S}(i\xi)^{\frac{1}{2}} u \|^2_{L^2(\gamma)} \simeq \|u \|_{H^{-1/2}(\gamma)}^2,$$
initially for $u \in L^2(\gamma)$, see the proof of Theorem 15.1 in \cite{Lions-Magenes}. In other words,
$$\mathcal{E}(\gamma, -\Delta_{\hat{\gamma}} + 1/4 + \xi^2) \simeq H^{-1/2}(\gamma).$$
Lemma~\ref{lem:Sxicont} implies that this identification depends continuously on $\xi$.

\begin{lemma}
Let $\xi \in \R$. For any $\varepsilon > 0$, there is a $\delta > 0$ such that if $|\xi - \xi'| < \delta$, then
	$$ \left| \|u\|_{H^{-1/2}_{\xi}(\gamma)}-\|u\|_{H^{-1/2}_{\xi'}(\gamma)}\right| \leq \varepsilon \|u\|_{H^{-1/2}(\gamma)}.$$
	In particular, for any compact set $B \subset \mathbb{R}$, there are constants $c_B, C_B > 0$ such that
	\begin{equation} \label{eq:normcomp}
	c_B \|u\|_{H^{-1/2}(\gamma)} \leq \|u\|_{H^{-1/2}_{\xi}(\gamma)} \leq C_B \|u\|_{H^{-1/2}(\gamma)}, \qquad \xi \in B, \; u \in H^{-1/2}(\gamma).
	\end{equation}
\end{lemma}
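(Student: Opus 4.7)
The plan is to deduce both parts of the lemma from the fact that $\xi \mapsto \tilde{S}(i\xi)$ is continuous as a map into $\mathcal{L}(H^{-1/2}(\gamma), H^{1/2}(\gamma))$, and then to leverage this continuity together with the already established isomorphism property of $\tilde{S}(i\xi)$ at each individual $\xi$. The key identity to exploit is
\begin{equation*}
\|u\|_{H^{-1/2}_\xi(\gamma)}^2 - \|u\|_{H^{-1/2}_{\xi'}(\gamma)}^2 = \langle (\tilde{S}(i\xi) - \tilde{S}(i\xi'))u, u \rangle,
\end{equation*}
initially for $u \in L^2(\gamma)$, and then for $u \in H^{-1/2}(\gamma)$ after interpreting the pairing as the $H^{-1/2}(\gamma)$--$H^{1/2}(\gamma)$ duality.

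The first step is to upgrade Lemma~\ref{lem:Sxicont} to an estimate
\begin{equation*}
\|\tilde{S}(i\xi) - \tilde{S}(i\xi')\|_{\mathcal{L}(H^{-1/2}(\gamma), H^{1/2}(\gamma))} \lesssim |\xi - \xi'|.
\end{equation*}
To do this I first observe that the kernel of $\tilde{S}(i\xi)$ is symmetric under $(\omega,\omega') \mapsto (\omega', \omega)$ (up to conjugation of $\xi$), so $\tilde{S}(i\xi)$ is self-adjoint on $L^2(\gamma)$. Dualizing Lemma~\ref{lem:Sxicont} therefore yields $\|\tilde{S}(i\xi) - \tilde{S}(i\xi')\|_{\mathcal{L}(H^{-1}(\gamma), L^2(\gamma))} \lesssim |\xi - \xi'|$. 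The desired bound then follows by complex interpolation at $\theta = 1/2$, using the interpolation of Sobolev scales on $\gamma = \partial \hat{\gamma}$ recalled in Section~\ref{subsec:extrapolation}.

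With this in hand, the displayed identity and duality yield
\begin{equation*}
\bigl| \|u\|_{H^{-1/2}_\xi(\gamma)}^2 - \|u\|_{H^{-1/2}_{\xi'}(\gamma)}^2 \bigr| \lesssim |\xi - \xi'| \, \|u\|_{H^{-1/2}(\gamma)}^2.
\end{equation*}
Applying the elementary inequality $|\sqrt{a} - \sqrt{b}|^2 \leq |a - b|$ for $a,b \geq 0$ to $a = \|u\|_{H^{-1/2}_\xi(\gamma)}^2$ and $b = \|u\|_{H^{-1/2}_{\xi'}(\gamma)}^2$, then taking $\delta$ so that $C\delta < \varepsilon^2$, proves the first assertion of the lemma.

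For the second assertion, fix $\xi_0 \in B$. Because $\tilde{S}(i\xi_0)$ is an isomorphism $H^{-1/2}(\gamma) \to H^{1/2}(\gamma)$ with non-negative symbol, as recorded just before the statement of the lemma, there exist constants $c(\xi_0), C(\xi_0) > 0$ such that $c(\xi_0) \|u\|_{H^{-1/2}(\gamma)} \leq \|u\|_{H^{-1/2}_{\xi_0}(\gamma)} \leq C(\xi_0) \|u\|_{H^{-1/2}(\gamma)}$ for all $u$. Applying the first part with $\varepsilon = c(\xi_0)/2$ provides a neighborhood $U_{\xi_0}$ of $\xi_0$ on which
\begin{equation*}
\tfrac{1}{2}c(\xi_0) \|u\|_{H^{-1/2}(\gamma)} \leq \|u\|_{H^{-1/2}_\xi(\gamma)} \leq \bigl(C(\xi_0) + \tfrac{1}{2}c(\xi_0)\bigr)\|u\|_{H^{-1/2}(\gamma)}, \quad \xi \in U_{\xi_0}.
\end{equation*}
Since $B$ is compact, finitely many such neighborhoods cover $B$, and taking the minimum and maximum of the corresponding constants yields \eqref{eq:normcomp}. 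The only real obstacle is the interpolation step used to derive the $H^{-1/2} \to H^{1/2}$ bound for $\tilde{S}(i\xi) - \tilde{S}(i\xi')$; everything else is a routine $\varepsilon$--$\delta$ argument.
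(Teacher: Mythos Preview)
Your proof is correct and follows essentially the same approach as the paper: dualize Lemma~\ref{lem:Sxicont} using self-adjointness of $\tilde S(i\xi)$ on $L^2(\gamma)$, interpolate to obtain the $H^{-1/2}\to H^{1/2}$ bound on $\tilde S(i\xi)-\tilde S(i\xi')$, and translate this into the estimate on $\|u\|_{H^{-1/2}_\xi}^2-\|u\|_{H^{-1/2}_{\xi'}}^2$. The one genuine difference is in how you pass from squares to norms: the paper first establishes the uniform comparison \eqref{eq:normcomp} via the pointwise equivalence $\|u\|_{H^{-1/2}_\xi}\simeq\|u\|_{H^{-1/2}}$ and a compactness argument, and only then deduces continuity of the norm by dividing by $\|u\|_{H^{-1/2}_\xi}+\|u\|_{H^{-1/2}_{\xi'}}\gtrsim\|u\|_{H^{-1/2}}$; you instead invoke $|\sqrt a-\sqrt b|^2\le|a-b|$ to obtain the first assertion directly, and then run the compactness argument afterward. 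Your ordering is slightly cleaner, since it avoids the mild circularity of needing the uniform bounds before proving the continuity statement that precedes them in the lemma.
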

\begin{proof}
By Lemma~\ref{lem:Sxicont}, there is for every $\epsilon>0$ a $\delta>0$ such that if $|\xi-\xi'|<\delta,$ then
\begin{align*}
    \|\tilde{S}(i\xi)-\tilde{S}(i\xi')\|_{\mathcal{L}(L^2(\gamma), H^1(\gamma))}= \|\tilde{S}(i\xi)-\tilde{S}(i\xi')\|_{\mathcal{L}(H^{-1}(\gamma), L^2(\gamma))}<\epsilon.
\end{align*}
By interpolation we obtain, for $|\xi - \xi'| < \delta$, that
$$
 \|\tilde{S}(i\xi)-\tilde{S}(i\xi')\|_{\mathcal{L}({H^{-1/2}(\gamma), H^{1/2}(\gamma)})}<\epsilon,
$$
and therefore that
\begin{align*}
\left| \|u\|_{H^{-1/2}_{\xi}(\gamma)}^2-\|u\|_{H^{-1/2}_{\xi'}(\gamma)}^2\right| =    |\langle(\tilde{S}(i\xi)-\tilde{S}(i\xi'))u,u \rangle_{L^2(\gamma)}| \leq \varepsilon \|u\|_{H^{-1/2}(\gamma)}^2.
\end{align*}
Since $\|u\|_{H^{-1/2}_{\xi}(\gamma)} \simeq \|u\|_{H^{-1/2}(\gamma)}$ for every fixed $\xi$, this implies \eqref{eq:normcomp} via a compactness argument. In turn, it also implies the continuity of the $H^{-1/2}_\xi(\gamma)$-norm.
 \end{proof}

\subsection{Analysis of $H(i\xi)$ on $H^{1/2}(\gamma)$}
Let $\xi \in \R$. Since $H(i\xi)$ is the double layer potential operator on $\gamma$ for $-\Delta_{{\hat{\gamma}}} + 1/4 + \xi^2$, we have the Calder\'on identity
\begin{equation}\label{plemelj}
H(i\xi) \tilde{S}(i\xi) = \tilde{S}(i\xi) H^\ast(i\xi),
\end{equation}
valid on $L^2(\gamma)$ \cite[Formula (7.41)]{MT99}. Therefore $H^\ast(i\xi)$ is formally symmetric in the scalar product of $\mathcal{E}(\gamma, -\Delta_{\hat{\gamma}} + 1/4 + \xi^2)$ (just like $K^\ast$ is symmetric in the scalar product of $\mathcal{E}(\partial \Omega)$). In fact, the symmetrization theory initiated by Krein \cite{Krein} implies that $H^\ast(i\xi)$ defines a bounded self-adjoint operator on $\mathcal{E}(\gamma, -\Delta_{\hat{\gamma}} + 1/4 + \xi^2)$, and
$$\|H^\ast(i\xi)\|_{\mathcal{L}(H^{-1/2}_\xi(\gamma))} \leq \|H^\ast(i\xi)\|_{L^2(\gamma)}.$$
Combined with Lemma~\ref{PropertiesH}, we have the following conclusion.
\begin{lemma} \label{lem:krein}
For every $\xi \in \R$, $H^\ast(i\xi)$ is self-adjoint on $\mathcal{E}(\gamma, -\Delta_{\hat{\gamma}} + 1/4 + \xi^2)$, and
$$\sup_{\xi \in \R} \|H^\ast(i\xi)\|_{\mathcal{L}(H^{-1/2}_\xi(\gamma))} < \infty.$$
\end{lemma}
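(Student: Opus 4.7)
The plan is to establish both assertions by combining a Calder\'on-type symmetrization argument of Krein with the uniform $L^2(\gamma)$-bound for $H(i\xi)$ provided by Lemma~\ref{PropertiesH}. I first want to verify that $H^\ast(i\xi)$ is symmetric in the scalar product of $\mathcal{E}(\gamma, -\Delta_{\hat{\gamma}} + 1/4 + \xi^2)$. For $u, v \in L^2(\gamma)$, equation~\eqref{plemelj} gives
$$\langle H^\ast(i\xi) u, v \rangle_{H^{-1/2}_\xi(\gamma)} = \langle \tilde{S}(i\xi) H^\ast(i\xi) u, v \rangle_{L^2(\gamma)} = \langle H(i\xi) \tilde{S}(i\xi) u, v \rangle_{L^2(\gamma)} = \langle u, H^\ast(i\xi) v \rangle_{H^{-1/2}_\xi(\gamma)},$$
using that $H(i\xi)$ and $H^\ast(i\xi)$ are $L^2(\gamma)$-adjoints. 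Thus $H^\ast(i\xi)$ is formally symmetric on the dense subspace $L^2(\gamma) \subset H^{-1/2}_\xi(\gamma)$.

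Next, I would prove the Krein-type norm bound $\|H^\ast(i\xi)\|_{\mathcal{L}(H^{-1/2}_\xi(\gamma))} \leq \|H^\ast(i\xi)\|_{\mathcal{L}(L^2(\gamma))}$. Writing $T = H^\ast(i\xi)$, symmetry and Cauchy--Schwarz yield
$$\|Tu\|^2_{H^{-1/2}_\xi(\gamma)} = \langle T^2 u, u \rangle_{H^{-1/2}_\xi(\gamma)} \leq \|T^2 u\|_{H^{-1/2}_\xi(\gamma)} \|u\|_{H^{-1/2}_\xi(\gamma)},$$
for any $u \in L^2(\gamma)$. Iterating gives, for every $n \geq 1$,
$$\|Tu\|^2_{H^{-1/2}_\xi(\gamma)} \leq \|T^{2^n} u\|^{1/2^{n-1}}_{H^{-1/2}_\xi(\gamma)} \|u\|^{2 - 1/2^{n-1}}_{H^{-1/2}_\xi(\gamma)}.$$
On the other hand, the trivial estimate
$$\|T^{2^n} u\|_{H^{-1/2}_\xi(\gamma)}^2 = \langle \tilde{S}(i\xi) T^{2^n} u, T^{2^n} u \rangle_{L^2(\gamma)} \leq \|\tilde{S}(i\xi)\|_{\mathcal{L}(L^2(\gamma))} \|T\|^{2^{n+1}}_{\mathcal{L}(L^2(\gamma))}  \|u\|^2_{L^2(\gamma)}$$
combined with the previous inequality and the limit $n \to \infty$ will yield the desired bound $\|Tu\|_{H^{-1/2}_\xi(\gamma)} \leq \|T\|_{\mathcal{L}(L^2(\gamma))} \|u\|_{H^{-1/2}_\xi(\gamma)}$. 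By density, $T = H^\ast(i\xi)$ then extends uniquely to a bounded operator on $H^{-1/2}_\xi(\gamma)$, and symmetry passes to the extension. Since a bounded symmetric operator on a Hilbert space is self-adjoint, self-adjointness follows.

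Finally, I would establish the uniform bound by invoking Lemma~\ref{PropertiesH} with $\alpha = 0$. Applied on the line $\mre z = 0$, this lemma yields a uniform bound $\sup_{\xi \in \R} \|H(i\xi)\|_{\mathcal{L}(L^2(\gamma))} < \infty$ (the $H_0$-part is controlled uniformly in the strip, while the $H_1$-part is Hilbert--Schmidt with vanishing norm at infinity). Dualization gives the same bound for $H^\ast(i\xi)$, and the Krein estimate from the preceding step transfers this uniformly to the energy space norm. I do not anticipate a genuine obstacle; the only point requiring some care is justifying that the Krein iteration closes in the limit, which uses that the exponents $1/2^{n-1}$ on the $L^2$-quantities tend to $0$ as $n \to \infty$, so only the single factor $\|T\|^2_{\mathcal{L}(L^2(\gamma))}$ survives.
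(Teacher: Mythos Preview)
Your proposal is correct and follows the same approach as the paper: the paper establishes formal symmetry from the Calder\'on identity \eqref{plemelj}, then simply invokes Krein's symmetrization theory for the norm bound $\|H^\ast(i\xi)\|_{\mathcal{L}(H^{-1/2}_\xi(\gamma))} \leq \|H^\ast(i\xi)\|_{\mathcal{L}(L^2(\gamma))}$, and finally appeals to Lemma~\ref{PropertiesH}. You have essentially written out the classical Krein iteration argument that the paper leaves as a citation, so the two proofs are the same in substance.
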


We can also deduce the continuous dependence on $\xi$ from \eqref{plemelj}.
\begin{lemma}
The map $\xi \mapsto H(i\xi) \colon H^{1/2}(\gamma) \to H^{1/2}(\gamma)$ is continuous on $\R$.
\end{lemma}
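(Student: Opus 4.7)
The plan is to bootstrap continuity from $L^2(\gamma)$ to $H^1(\gamma)$ by means of the Plemelj--Calder\'on identity \eqref{plemelj}, and then interpolate to obtain continuity on $H^{1/2}(\gamma)$.

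First, continuity of $\xi \mapsto H(i\xi) \in \mathcal{L}(L^2(\gamma))$ is immediate from Lemma~\ref{PropertiesH}(i) applied with $\alpha = 0$, which in fact asserts analyticity of $z \mapsto H(z) \in \mathcal{L}(L^2(\gamma))$ throughout the strip $-3/2 < \mre z < 3/2$. Taking $L^2$-adjoints preserves operator norms, so $\xi \mapsto H^\ast(i\xi) \in \mathcal{L}(L^2(\gamma))$ is continuous as well.

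Next, I will establish boundedness and continuity on $H^1(\gamma)$ by way of the factorization
$$H(i\xi)|_{H^1(\gamma)} = \tilde{S}(i\xi)\, H^\ast(i\xi)\, \tilde{S}(i\xi)^{-1},$$
which follows from \eqref{plemelj} together with the fact, recorded just before Lemma~\ref{lem:Sxicont}, that $\tilde{S}(i\xi) \colon L^2(\gamma) \to H^1(\gamma)$ is an isomorphism. (Given $u \in H^1(\gamma)$, set $v := \tilde{S}(i\xi)^{-1} u \in L^2(\gamma)$ and apply \eqref{plemelj} to $v$.) By Lemma~\ref{lem:Sxicont}, $\xi \mapsto \tilde{S}(i\xi) \in \mathcal{L}(L^2(\gamma), H^1(\gamma))$ is continuous; since inversion is continuous on the open set of invertible operators (via the Neumann series $\tilde{S}(i\xi)^{-1} = [I + \tilde{S}(i\xi_0)^{-1}(\tilde{S}(i\xi) - \tilde{S}(i\xi_0))]^{-1} \tilde{S}(i\xi_0)^{-1}$, valid for $\xi$ sufficiently close to $\xi_0$), the map $\xi \mapsto \tilde{S}(i\xi)^{-1} \in \mathcal{L}(H^1(\gamma), L^2(\gamma))$ is continuous as well. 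Composing the three continuous factors gives the continuity of $\xi \mapsto H(i\xi) \in \mathcal{L}(H^1(\gamma))$.

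Finally, fix $\xi_0 \in \R$ and set $T_\xi := H(i\xi) - H(i\xi_0)$. The two preceding steps yield $\|T_\xi\|_{\mathcal{L}(L^2(\gamma))} \to 0$ and $\|T_\xi\|_{\mathcal{L}(H^1(\gamma))} \to 0$ as $\xi \to \xi_0$. Since $(L^2(\gamma), H^1(\gamma))_{1/2} = H^{1/2}(\gamma)$ by the interpolation result recalled in Section~\ref{subsec:extrapolation} (applied with $\Sigma = \hat{\gamma}$), the standard operator-norm interpolation inequality gives
$$\|T_\xi\|_{\mathcal{L}(H^{1/2}(\gamma))} \leq C\,\|T_\xi\|_{\mathcal{L}(L^2(\gamma))}^{1/2}\,\|T_\xi\|_{\mathcal{L}(H^1(\gamma))}^{1/2} \longrightarrow 0,$$
which is the desired conclusion. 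I do not anticipate any substantial obstacle; the only mildly delicate point is the continuity of $\xi \mapsto \tilde{S}(i\xi)^{-1}$, which is a standard fact.
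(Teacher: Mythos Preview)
Your proof is correct and follows essentially the same route as the paper: establish continuity on $L^2(\gamma)$ via Lemma~\ref{PropertiesH}, lift it to $H^1(\gamma)$ through the factorization $H(i\xi) = \tilde{S}(i\xi)\, H^\ast(i\xi)\, \tilde{S}(i\xi)^{-1}$ using Lemma~\ref{lem:Sxicont}, and then interpolate. You have simply filled in a few details (continuity of adjoints and of the inverse) that the paper leaves implicit.
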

\begin{proof}
From Lemmas~\ref{PropertiesH} and \ref{lem:Sxicont} we know that $H(i\xi) \colon L^2(\gamma) \to L^2(\gamma)$ and $\tilde{S}(i\xi) \colon L^2(\gamma) \to H^1(\gamma)$ depend continuously on $\xi$. Therefore the same is true of $H(i\xi) \colon H^1(\gamma) \to H^1(\gamma)$, in view of the formula
$$H(i\xi) = \tilde{S}(i\xi) H^\ast(i\xi) (\tilde{S}(i\xi))^{-1} \colon H^1(\gamma) \to H^1(\gamma).$$
The result now follows from interpolation.
\end{proof}
To state the main result of this subsection, we recall the notation of Lemma~\ref{spectH} and Theorem~\ref{contessspecK}. In particular, $\Lambda^{\alpha}_{\gamma, i\xi}$ denotes the isolated eigenvalues of $H(i\xi) \colon L^2_\alpha(\gamma) \to L^2_\alpha(\gamma)$. By Lemma~\ref{spectH}, every $\lambda \in \Lambda^{\alpha}_{\gamma, i\xi}$ is located in $(-1/2,1/2)$ and satisfies $|\lambda| > |\Sigma_{\alpha, \beta_{j^\ast}}|$. By Lemma~\ref{lem:eigdec}, the sets $\Lambda^{\alpha}_{\gamma, i\xi}$ are increasing in $0 \leq \alpha < 1$.

\begin{theorem}\label{SpecHH1/2g}
For every $\xi\in\R$,
$$\sigma\left(H(i\xi), H^{1/2}(\gamma)\right)= \left\{x\in\R \, : \, |x|\le \max_{1\le j\le J   } \frac{|1-\beta_j/\pi|}{2} \right\}\cup \Lambda^\ast_{\gamma, i\xi},$$
where the set $\Lambda^\ast_{\gamma, i\xi} \subset (-1/2, 1/2)$ consists of the isolated eigenvalues of $H(i\xi) \colon H^{1/2}(\gamma) \to H^{1/2}(\gamma)$ and satisfies $$\Lambda^\ast_{\gamma, i\xi} = \bigcup_{0 \leq \alpha < 1} \Lambda^{\alpha}_{\gamma, i\xi}.$$
Furthermore, each point $\lambda_{\xi} \in \Lambda^\ast_{\gamma, i\xi}$ depends continuously on $\xi$.
\end{theorem}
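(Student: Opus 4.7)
The plan is to reduce the problem to a self-adjoint one via the Calder\'on identity, and then to perform a Mellin localization at each corner of $\gamma$ in analogy with Lemma~\ref{spectH}, but in the endpoint ``$\alpha = 1$'' regime that is encoded by $H^{1/2}(\gamma)$.

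Since $\tilde{S}(i\xi) \colon H^{-1/2}(\gamma) \to H^{1/2}(\gamma)$ is an isomorphism and $H(i\xi)\tilde{S}(i\xi) = \tilde{S}(i\xi) H^{\ast}(i\xi)$, the operator $H(i\xi)$ on $H^{1/2}(\gamma)$ is similar to $H^{\ast}(i\xi)$ on $H^{-1/2}(\gamma)$. By Lemma~\ref{lem:krein} the latter is self-adjoint on the equivalent-norm space $H^{-1/2}_{\xi}(\gamma)$, so the spectrum of $H(i\xi)$ on $H^{1/2}(\gamma)$ is real and every point outside the essential spectrum is an isolated eigenvalue of finite multiplicity.

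To identify $\sigma_{\ess}$, I would use the decomposition \eqref{eq:Hdecomp}, $H(i\xi) = H_{0}(i\xi) + H_{1}(i\xi)$. The tail $H_{1}(i\xi)$ is compact on $H^{1/2}(\gamma)$ by the Cwikel extrapolation theorem of Section~\ref{subsec:extrapolation}: it is Hilbert--Schmidt on $L^{2}(\gamma)$ by Lemma~\ref{PropertiesH}, and its convolution kernel, smooth away from the corners, makes $H_{1}(i\xi) \colon L^{2}(\gamma) \to H^{1}(\gamma)$ bounded. The bulk of the argument is then to analyze the localized piece. Near each corner $E_{j}$, the change of variables $\sigma = \tan(s/2)$ used in the proof of Lemma~\ref{spectH} reduces $\varphi_{j} H(i\xi)\varphi_{j}$, modulo compact perturbations, to the compression of a scalar Mellin convolution operator; after multiplicative conjugation that realizes the local $H^{1/2}$-norm as a weighted Mellin $L^{2}$-norm, this compression becomes a Mellin pseudo-differential operator to which the calculus recalled in Section~\ref{subsec:mellinpdo} applies. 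Its symbol on the critical line encodes the essential spectrum; the resulting computation, identical to that for the two-dimensional Neumann--Poincar\'e operator on curvilinear polygons in the $H^{1/2}$ setting (cf.\ \eqref{eq:NP2Dcurvispec} and \cite{BHM20,Mitreapolygon,Per21}), collapses the Jordan curve $\Sigma_{\alpha,\beta_{j}}$ of Lemma~\ref{spectH} to the real interval $[-|1-\beta_{j}/\pi|/2,\,|1-\beta_{j}/\pi|/2]$. The main obstacle is precisely to realize that local $H^{1/2}$-norm as a weighted Mellin $L^{2}$-norm in a way compatible with the Mellin calculus, since $H^{1/2}$ is a nonlocal fractional Sobolev space and the corner decomposition is only local. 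The complementary inclusion $[-M,M] \subset \sigma_{\ess}$, with $M = \max_{j}|1-\beta_{j}/\pi|/2$, would be established by constructing explicit Weyl sequences at the maximizing corner $E_{j^{\ast}}$ from plane-wave Mellin inputs to the model operator, compressed to $H^{1/2}(\gamma)$ along the lines of Lemma~\ref{lem:Weyl}.

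Finally, for the identification $\Lambda^{\ast}_{\gamma,i\xi} = \bigcup_{0 \leq \alpha < 1}\Lambda^{\alpha}_{\gamma,i\xi}$, I would use the fractional Hardy embedding $H^{1/2}(\gamma) \hookrightarrow L^{2}_{\alpha}(\gamma)$ valid for every $\alpha < 1$, obtained by dualizing Remark~\ref{rmk:energynorm}. If $u \in H^{1/2}(\gamma)$ is an eigenvector of $H(i\xi)$ with $|\lambda| > M$, then $u \in L^{2}_{\alpha}(\gamma)$ for every $\alpha < 1$; choosing $\alpha$ close enough to $1$ that $|\lambda| > |\Sigma_{\alpha,\beta_{j^{\ast}}}|$, Lemma~\ref{spectH} places $\lambda$ in $\Lambda^{\alpha}_{\gamma,i\xi}$. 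Conversely, if $\lambda \in \Lambda^{\alpha}_{\gamma,i\xi}$, then the index-zero conclusion of (the proof of) Lemma~\ref{lem:eigdec} forces $\bar\lambda$ to be an eigenvalue of $H^{\ast}(i\xi)$ on $L^{2}_{-\alpha}(\gamma)$; Lemma~\ref{eigenvH*} makes $\lambda$ real, and the embedding $L^{2}_{-\alpha}(\gamma) \hookrightarrow H^{-1/2}(\gamma)$ together with the Calder\'on similarity transports the eigenvector to a nontrivial solution of $(H(i\xi) - \lambda)u = 0$ in $H^{1/2}(\gamma)$. Continuity of each $\lambda_{\xi} \in \Lambda^{\ast}_{\gamma,i\xi}$ in $\xi$ then follows from the analyticity of $\xi \mapsto H(i\xi)$ in $H^{1/2}(\gamma)$ by the Riesz spectral projection argument already used at the end of the proof of Lemma~\ref{spectH}.
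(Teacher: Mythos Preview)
Your overall architecture---self-adjointness via the Calder\'on identity, compactness of the tail $H_1(i\xi)$ by extrapolation, and the identification of $\Lambda^\ast_{\gamma,i\xi}$ with $\bigcup_\alpha \Lambda^\alpha_{\gamma,i\xi}$ via the Hardy embedding and its dual---matches the paper's proof almost exactly, and your continuity argument is the same. The paper's extrapolation input is $L^2\to L^2$ compact together with $H^1\to H^1$ bounded (not $L^2\to H^1$), but that is a minor slip.

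Where you diverge is in the computation of $\sigma_{\ess}$, and there your plan has the gap you yourself flag. You propose to realize the local $H^{1/2}$-norm as a weighted Mellin $L^2$-norm and then apply the calculus of Section~\ref{subsec:mellinpdo}; but that calculus is written for $L^2([0,1],dt/t)$, and the ``$\alpha=1$'' endpoint that $H^{1/2}$ represents does not fit into this framework by a simple multiplicative conjugation. The paper does \emph{not} attempt this. Instead, starting from the same change of variable $\sigma=\tan(s/2)$ used in Lemma~\ref{spectH}, it observes that the local model near each corner is exactly the local model of the \emph{planar} Neumann--Poincar\'e operator on a curvilinear polygon $\tilde{\gamma}$ with the same angles. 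Concretely, there is a bi-Lipschitz map $\tau:\tilde{\gamma}\to\gamma$ (hence an isomorphism $Q$ on both $H^1$ and $H^{1/2}$) so that $H_0(i\xi)=Y+I(\xi)$ with $I(\xi)$ compact and $QYQ^{-1}=\sum_j\rho_jK_{\tilde{\gamma}}\rho_j$. The essential spectrum of $K_{\tilde\gamma}$ (and of its corner localizations) on $H^{1/2}(\tilde{\gamma})$ is already known from \cite{PerfektPutinar}, and this yields the interval $[-M,M]$ directly, with both inclusions---no separate Weyl-sequence construction is needed. In short, the paper sidesteps your ``main obstacle'' by transporting the $H^{1/2}$-analysis to the planar setting, where the answer is already available, rather than by building an $H^{1/2}$-adapted Mellin calculus from scratch.
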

	\begin{proof}
	We refer to the decomposition \eqref{eq:Hdecomp} for $\mre z = 0$,
	$$
	H(i\xi)=H_0(i\xi)+H_1(i\xi)=\sum_{1\le j\le J}\varphi_{j}   H(i\xi)\varphi_j+H_1(i\xi).
	$$
Then $H_1(i\xi) \colon L^2(\gamma) \to L^2(\gamma)$ is a Hilbert-Schmidt operator and it is not hard to see that $H_1(i\xi) \colon H^1(\gamma) \to H^1(\gamma)$ is compact \cite[p. 120]{Elschner}. From \eqref{eq:changeofvar} and \eqref{decompH} we find a planar curvilinear polygon $\tilde{\gamma} \subset \R^2$ with the same angles as $\gamma$ and a bi-Lipschitz change of variable $\tau \colon \tilde{\gamma} \to \gamma$, inducing an operator $Q \colon H^1(\gamma) \to H^1(\tilde{\gamma})$, $Qv = v \circ \tau$, such that:
\begin{itemize}
	\item in a neighborhood of every corner, $\tilde{\gamma}$ coincides with two line segments;
	\item $H_0(i\xi) = Y + I(\xi)$ decomposes into a compact term $I(\xi) \colon L^2(\gamma) \to L^2(\gamma)$ and an operator $Y$ such that
	$$Q Y Q^{-1} = \sum_{j=1}^J \rho_j K_{\tilde{\gamma}} \rho_j,$$
	where $K_{\tilde{\gamma}}$ is the planar Neumann--Poincar\'e operator of $\tilde{\gamma}$, and $(\rho_j)_j$ are smooth cut-off functions for the corners of $\tilde{\gamma}$, with mutually disjoint supports.
\end{itemize}
Note that $H(i\xi) \colon H^1(\gamma) \to H^1(\gamma)$ is bounded as a consequence of its $L^2(\gamma)$-boundedness and \eqref{plemelj}. The operator $Q Y Q^{-1} \colon H^1(\tilde{\gamma}) \to H^1(\tilde{\gamma})$, and thus $Y \colon H^1(\gamma) \to H^1(\gamma)$, is bounded for a similar reason. We conclude that $I(\xi) \colon H^1(\gamma) \to H^1(\gamma)$ must be bounded.

Since $H_1(i\xi), I(\xi) \colon L^2(\gamma) \to L^2(\gamma)$ are compact and $H_1(i\xi), I(\xi) \colon H^1(\gamma) \to H^1(\gamma)$ are bounded, extrapolation \cite{Cwikel} (\red{see Section \ref{subsec:extrapolation}}) yields that $H_1(i\xi)$ and $I(\xi)$ are compact as operators on $H^{1/2}(\gamma)$. The planar operator $QYQ^{-1}$ has been studied in \cite[Theorem~7 and Lemma~9]{PerfektPutinar}. Since $Q$ also acts as an isomorphism $Q \colon H^{1/2}(\gamma) \to H^{1/2}(\tilde{\gamma})$ we conclude that
$$\sigma_{\ess}(H(i\xi), H^{1/2}(\gamma)) = \sigma_{\ess}(QYQ^{-1}, H^{1/2}(\tilde{\gamma})) = \left\{x\in\R \, : \, |x|\le \max_{1\le j\le J   } \frac{|1-\beta_j/\pi|}{2} \right\}.$$
The remainder of the spectrum is made up of a sequence $\Lambda^\ast_{\gamma, i\xi}$ of isolated eigenvalues, since $H^\ast(i\xi) \colon H^{-1/2}(\gamma) \to H^{-1/2}(\gamma)$ is self-adjoint in the $H^{-1/2}_\xi(\gamma)$-norm, see Lemma~\ref{lem:krein}. 

Suppose that $\lambda \in \Lambda^\ast_{\gamma, i\xi}$. By the Hardy-type inequality deduced in Remark~\ref{rmk:energynorm} (which also follows from the Rellich-Kondrachov theorem and a fractional Hardy inequality), we know that $H^{1/2}(\gamma) \subset L^2_{\alpha}(\gamma)$ for every $0 \leq \alpha < 1$. Thus $\lambda$ is an eigenvalue of $H(i\xi) \colon L^2_{\alpha}(\gamma) \to L^2_{\alpha}(\gamma)$ for every $0 \leq \alpha < 1$. Since
$$\lim_{\alpha \to 1^{-}} |\Sigma_{\alpha, \beta_j}| = \frac{|1 - \beta_j/\pi|}{2}$$
we find that $\lambda$ is an isolated eigenvalue of $H(i\xi) \colon L^2_{\alpha}(\gamma) \to L^2_{\alpha}(\gamma)$ for sufficiently large $\alpha < 1$, that is, $\lambda \in \Lambda^\alpha_{\gamma, i\xi}$. Conversely, if $\lambda \in \Lambda^\alpha_{\gamma, i\xi}$ for some $0 \leq \alpha < 1$, then, by an index argument, $\lambda$ is an eigenvalue of $H^\ast(i\xi) \colon L^2_{-\alpha}(\gamma) \to L^2_{-\alpha}(\gamma)$ and therefore of $H^\ast(i\xi) \colon H^{-1/2}(\gamma) \to H^{-1/2}(\gamma)$, since $L^2_{-\alpha}(\gamma) \subset H^{-1/2}(\gamma)$. Thus $\lambda \in \Lambda^\ast_{\gamma, i\xi}$.

The continuity of the eigenvalues follows from the same argument as in Lemma~\ref{spectH}.
\end{proof}
\subsection{Analysis on the energy space of a polyhedral cone}
As in Section~\ref{sec:energycone}, let $\Gamma$ be a Lipschitz polyhedral cone.
Since $K^\ast$ is formed with respect to the duality pairing of $L^2(\Gamma) = L^2(\R_+, r \, dr) \otimes L^2(\gamma)$, its convolution kernel is given by 
$$K^\ast(t, \omega, \omega') = \frac{1}{t^2} K\left( \frac{1}{t}, \omega', \omega \right).$$
Therefore, for $0 < \mre z < 3$,
$$\mathcal{M}K^\ast(z) = (\mathcal{M}K(2 - \bar{z}))^\ast = H^\ast(3/2 - \bar{z}).$$
In particular, when $z = i \xi + 3/2$, $\xi \in \R$, we have that
$$\mathcal{M}K^\ast(i\xi + 3/2) = H^\ast(i\xi).$$
By the identification of $\mathcal{E}(\Gamma)$ in Section~\ref{sec:energycone} and Lemma~\ref{lem:krein}, we therefore obtain, for $f,g \in C^\infty_c( \cup_j F_j)$, that
\begin{align*}
\langle K^\ast f, g\rangle_{\mathcal{E}(\Gamma)} &= \frac{1}{2\pi}\int_\R\langle H^\ast(i\xi) \mathcal{M}f(i\xi + 3/2), \mathcal{M}g(i\xi+3/2) \rangle_{H^{-1/2}_\xi(\gamma)} \, d\xi \\
&= \frac{1}{2\pi}\int_\R\langle \mathcal{M}f(i\xi + 3/2), H^\ast(i\xi)\mathcal{M}g(i\xi+3/2) \rangle_{H^{-1/2}_\xi(\gamma)} \, d\xi =\langle f,K^\ast g\rangle_{\mathcal{E}(\Gamma)}.
\end{align*}
In other words, we have the following lemma.
\begin{lemma} \label{lem:Kastuneq}
$K^\ast \colon \mathcal{E}(\Gamma) \to \mathcal{E}(\Gamma)$ is unitarily equivalent to
$$I \otimes H^\ast(i\xi) \colon L^2(\mathbb{R}, d\xi) \otimes H^{-1/2}_\xi(\gamma) \to L^2(\mathbb{R}, d\xi) \otimes H^{-1/2}_\xi(\gamma).$$
In particular, $K^\ast \colon \mathcal{E}(\Gamma) \to \mathcal{E}(\Gamma)$ is self-adjoint and bounded, by Lemma~\ref{lem:krein}.
\end{lemma}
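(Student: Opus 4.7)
The plan is to exploit the tensor-product description of $\mathcal{E}(\Gamma)$ provided in Section~\ref{sec:energycone} and transfer the action of $K^\ast$ fiber-by-fiber to the spherical polygon $\gamma$. First I would identify the Mellin convolution kernel of $K^\ast$. Since the $L^2(\Gamma)$-pairing is taken with respect to the measure $r\,dr\,d\omega$, the adjoint of a convolution kernel $K(t,\omega,\omega')$ is $\frac{1}{t^2}K(1/t, \omega', \omega)$; computing the Mellin transform then yields $\mathcal{M}K^\ast(z) = H^\ast(3/2-\bar z)$ on the strip $0 < \mre z < 3$. Evaluating at $z = i\xi + 3/2$ gives the crucial identity $\mathcal{M}K^\ast(i\xi + 3/2) = H^\ast(i\xi)$.

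Next I would combine this with the unitary map
\[
\tfrac{1}{\sqrt{2\pi}}\mathcal{M}M_{r^{3/2}} \colon \mathcal{E}(\Gamma) \to L^2(\R, d\xi) \otimes H^{-1/2}_\xi(\gamma)
\]
constructed in Section~\ref{sec:energycone}. For $f, g \in C^\infty_c(\cup_j F_j)$, Plancherel's theorem for the Mellin transform together with the definition of the $H^{-1/2}_\xi(\gamma)$-norm (via $\tilde{S}(i\xi) = \mathcal{M}\mathcal{S}_0(i\xi+1)$) gives
\begin{align*}
\langle K^\ast f, g \rangle_{\mathcal{E}(\Gamma)} &= \langle \mathcal{S}K^\ast f, g\rangle_{L^2(\Gamma)} = \langle K\mathcal{S} f, g\rangle_{L^2(\Gamma)} \\
&= \tfrac{1}{2\pi}\int_\R \langle H^\ast(i\xi)\mathcal{M}f(i\xi+3/2), \mathcal{M}g(i\xi+3/2)\rangle_{H^{-1/2}_\xi(\gamma)}\,d\xi,
\end{align*}
where the first two equalities use the Plemelj identity $\mathcal{S}K^\ast = K\mathcal{S}$ and the third unwinds the definitions. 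This computation — essentially the one already sketched in the excerpt — shows directly that $K^\ast$ is intertwined with $I \otimes H^\ast(i\xi)$ under the unitary map above.

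Two points require justification. First, the passage from $C^\infty_c(\cup_j F_j)$ to all of $\mathcal{E}(\Gamma)$ is by density, once the relevant operator is known to be bounded, and the Mellin transform extends continuously by construction of the isometry. Second, boundedness of $I \otimes H^\ast(i\xi)$ on the tensor product follows from Lemma~\ref{lem:tensorbdd} combined with the uniform bound
\[
\sup_{\xi \in \R}\|H^\ast(i\xi)\|_{\mathcal{L}(H^{-1/2}_\xi(\gamma))} < \infty
\]
from Lemma~\ref{lem:krein}; this legitimates the above computation and yields boundedness of $K^\ast$ on $\mathcal{E}(\Gamma)$. Self-adjointness is then immediate: each fiber $H^\ast(i\xi)$ is self-adjoint on $H^{-1/2}_\xi(\gamma)$ by Lemma~\ref{lem:krein}, and fiberwise self-adjointness of a uniformly bounded direct-integral operator transfers to the tensor-product operator, which is thereby unitarily equivalent to a self-adjoint operator.

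The only real technical obstacle is ensuring the measurability and density needed to treat $I \otimes H^\ast(i\xi)$ as a genuine direct integral, but since $\xi \mapsto H^\ast(i\xi) \in \mathcal{L}(L^2(\gamma))$ is norm-continuous (Lemma~\ref{PropertiesH}) and $\xi \mapsto \tilde{S}(i\xi)$ is uniformly continuous (Lemma~\ref{lem:Sxicont}), strong measurability is automatic and Lemma~\ref{lem:tensorbdd} applies. Everything else is a clean reindexing of the calculations already displayed just before the lemma statement.
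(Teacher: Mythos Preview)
Your proposal is correct and follows essentially the same approach as the paper: identify the Mellin symbol of $K^\ast$ as $\mathcal{M}K^\ast(i\xi+3/2)=H^\ast(i\xi)$ via the kernel relation $K^\ast(t,\omega,\omega')=t^{-2}K(1/t,\omega',\omega)$, and then use the unitary $\frac{1}{\sqrt{2\pi}}\mathcal{M}M_{r^{3/2}}$ from Section~\ref{sec:energycone} to intertwine $K^\ast$ with $I\otimes H^\ast(i\xi)$. The paper's argument is exactly this, with self-adjointness and boundedness read off fiberwise from Lemma~\ref{lem:krein}; your detour through the Plemelj identity $\mathcal{S}K^\ast=K\mathcal{S}$ is redundant once the Mellin symbol has been computed directly, but not incorrect, and your additional remarks on density and measurability simply make explicit what the paper leaves implicit.
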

For brevity, we write
$$|\Sigma_{\ast, \beta_j}| = \frac{|1 - \beta_j/\pi|}{2},$$
and let $j^\ast$ be an index such that $|\Sigma_{\ast, \beta_{j^\ast}}| = \max_{1 \leq j \leq J} |\Sigma_{\ast, \beta_{j}}|$.

\begin{theorem}\label{specKonSob} 
Let $\mu_\pm$ be as in Theorem~\ref{contessspecK}, and let
\begin{equation*} 
\Lambda^\ast = \{\lambda \, : \, \lambda \textnormal{ is an isolated eigenvalue of } H(z) \colon H^{1/2}(\gamma) \to H^{1/2}(\gamma), \textnormal{ for some} \mre z = 0\}.
\end{equation*}
Then
\begin{equation} \label{eq:Lambdast}
\Lambda^\ast = \left[-\mu_-, -|\Sigma_{\ast, \beta_{j^*}}|\right) \cup \left(|\Sigma_{\ast, \beta_{j^*}}|, \mu_+\right].
\end{equation}
Furthermore, $\sigma(K^*, \mathcal{E}(\Gamma)) = \sigma_{\ess}(K^*, \mathcal{E}(\Gamma))$, and

\begin{equation*}\sigma(K^*, \mathcal{E}(\Gamma)) =\left[-|\Sigma_{\ast, \beta_{j^*}}|,|\Sigma_{\ast, \beta_{j^*}}| \right]\cup \Lambda^\ast.
\end{equation*}
	\end{theorem}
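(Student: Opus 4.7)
The plan is to use the unitary equivalence of Lemma~\ref{lem:Kastuneq} to reduce $K^\ast$ on $\mathcal{E}(\Gamma)$ to the direct integral $I\otimes H^\ast(i\xi)$ of the self-adjoint fibers $H^\ast(i\xi)$ on $H^{-1/2}_\xi(\gamma)$, and then to combine the fiberwise description from Theorem~\ref{SpecHH1/2g} with the monotonicity calculation of Theorem~\ref{contessspecK}. First I would establish the formula for $\Lambda^\ast$. Theorem~\ref{SpecHH1/2g} gives $\Lambda^\ast_{\gamma, i\xi} = \bigcup_{0 \leq \alpha < 1} \Lambda^\alpha_{\gamma, i\xi}$, so
$$\Lambda^\ast = \bigcup_{\mre z = 0} \Lambda^\ast_{\gamma, z} = \bigcup_{0 \leq \alpha < 1} \Lambda^\alpha.$$
By Theorem~\ref{contessspecK}, $\Lambda^\alpha = [-\mu_-, -|\Sigma_{\alpha, \beta_{j^\ast}}|) \cup (|\Sigma_{\alpha, \beta_{j^\ast}}|, \mu_+]$ with the same $\mu_\pm$ for all $\alpha$. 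Since $|\Sigma_{\alpha, \beta_{j^\ast}}|$ is continuous and strictly decreasing in $\alpha\in[0,1)$ with limit $|1-\beta_{j^\ast}/\pi|/2 = |\Sigma_{\ast, \beta_{j^\ast}}|$, the union telescopes to the claimed expression \eqref{eq:Lambdast}.

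Next I would identify the fiber spectrum and assemble $\sigma(K^\ast)$. Self-adjointness of $H^\ast(i\xi)$ on $H^{-1/2}_\xi(\gamma)$ (Lemma~\ref{lem:krein}), combined with the Banach-space duality between $H^{1/2}(\gamma)$ and $H^{-1/2}_\xi(\gamma) \simeq H^{-1/2}(\gamma)$ induced by the $L^2(\gamma)$-pairing, yields
$$\sigma(H^\ast(i\xi), H^{-1/2}_\xi(\gamma)) = \sigma(H(i\xi), H^{1/2}(\gamma)) = [-|\Sigma_{\ast, \beta_{j^\ast}}|, |\Sigma_{\ast, \beta_{j^\ast}}|] \cup \Lambda^\ast_{\gamma, i\xi}$$
via Theorem~\ref{SpecHH1/2g}. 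Let $S := [-|\Sigma_{\ast, \beta_{j^\ast}}|, |\Sigma_{\ast, \beta_{j^\ast}}|] \cup \Lambda^\ast$; a short inspection of the formula just established shows that the half-open pieces of $\Lambda^\ast$ are closed up by the endpoints $\pm|\Sigma_{\ast, \beta_{j^\ast}}|$, so that $S$ is closed. For $\lambda\notin S$, self-adjointness gives the uniform fiber estimate
$$\|(H^\ast(i\xi)-\lambda)^{-1}\|_{\mathcal{L}(H^{-1/2}_\xi(\gamma))} = \dist(\lambda, \sigma(H^\ast(i\xi)))^{-1} \leq \dist(\lambda, S)^{-1},$$
and a direct-integral analogue of Lemma~\ref{lem:tensorbdd} produces a bounded inverse for $I\otimes(H^\ast(i\xi)-\lambda)$; hence $\lambda\notin\sigma(K^\ast)$.

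For the converse (and to simultaneously show $\sigma = \sigma_{\ess}$), for each $\lambda\in S$ I would construct a Weyl sequence $w_n = f_n\otimes g_n$, where $f_n\in L^2(\mathbb{R})$ is a normalized bump whose support shrinks around some $\xi_0$ with $\lambda\in\sigma(H^\ast(i\xi_0))$, and $g_n\in H^{-1/2}_{\xi_0}(\gamma)$ is an approximate eigenvector of $H^\ast(i\xi_0)-\lambda$ (a genuine eigenvector when $\lambda\in\Lambda^\ast_{\gamma, i\xi_0}$, otherwise a Weyl sequence furnished by self-adjointness, since $\lambda\in\sigma_{\ess}(H^\ast(i\xi_0))$). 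Weak convergence $w_n\rightharpoonup 0$ in $\mathcal{E}(\Gamma)$ follows from the shrinking $\xi$-support of $f_n$, and the error splits via the triangle inequality into a term involving $(H^\ast(i\xi_0)-\lambda)g_n$ (small by the choice of $g_n$) and a term bounded by $\sup_{|\xi-\xi_0|<\delta}\|H^\ast(i\xi)-H^\ast(i\xi_0)\|$, which is controlled by the continuity of $\xi\mapsto H(i\xi)$ on $H^{1/2}(\gamma)$ (passed to the dual). This shows $S\subseteq\sigma(K^\ast)$, and since every $\lambda\in S$ carries a Weyl sequence in the self-adjoint setting, we conclude $\sigma(K^\ast)=\sigma_{\ess}(K^\ast)$.

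The main technical obstacle is the $\xi$-dependence of the fiber norms $\|\cdot\|_{H^{-1/2}_\xi(\gamma)}$: Lemma~\ref{lem:tensorbdd} is formulated on a fixed Hilbert space, so its direct-integral analogue must be verified using the uniform equivalence of $\|\cdot\|_{H^{-1/2}_\xi(\gamma)}$ with $\|\cdot\|_{H^{-1/2}(\gamma)}$ on compact $\xi$-sets together with the uniform operator-norm bound of Lemma~\ref{lem:krein} for the tails $|\xi|\to\infty$. The same norm book-keeping is what ensures that the Weyl sequence $w_n$ has comparable $\mathcal{E}(\Gamma)$-norm and $\xi$-fiber norm along the shrinking support of $f_n$.
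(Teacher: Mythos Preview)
Your proposal is correct and follows essentially the same route as the paper: the identity $\Lambda^\ast=\bigcup_\alpha\Lambda^\alpha$ via Theorem~\ref{SpecHH1/2g} combined with Theorem~\ref{contessspecK} to obtain \eqref{eq:Lambdast}, the uniform resolvent bound from self-adjointness for $\lambda\notin S$, and a Weyl-sequence construction modelled on Lemma~\ref{lem:Weyl}/Theorem~\ref{specK} for $\lambda\in S$. Your explicit acknowledgment of the $\xi$-dependence of the fiber norms when invoking Lemma~\ref{lem:tensorbdd} is a point the paper passes over silently, and your frequency-side description of the Weyl sequence is the Mellin-dual picture of the paper's physical-space bump $f(r)=c\,\chi_{[A,\sqrt{A}]}(r)\,r^{-1/2-i\xi}$.
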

\begin{proof}
By Theorem~\ref{SpecHH1/2g},
$$\Lambda^\ast = \bigcup_{\mre z = 0} \Lambda^\ast_{\gamma, z} = \bigcup_{0 \leq \alpha < 1} \Lambda^\alpha.$$
Since $\inf_{0 \leq \alpha < 1} |\Sigma_{\alpha, \beta_j}| = |\Sigma_{\ast, \beta_j}|$, \eqref{eq:Lambdast} follows from Theorem~\ref{contessspecK}.

Suppose that $\lambda \notin \left[-|\Sigma_{\ast, \beta_{j^*}}|,|\Sigma_{\ast, \beta_{j^*}}| \right]\cup \Lambda^\ast$. Then, by Theorem~\ref{SpecHH1/2g} and since $H^\ast(i\xi)$ is self-adjoint on $H^{-1/2}_{\xi}(\gamma)$,
$$\sup_{\xi \in \R} \|(H^\ast(i\xi) - \lambda)^{-1}\|_{\mathcal{L}(H^{-1/2}_{\xi}(\gamma))} = \sup_{\xi \in \R}\frac{1}{\dist (\lambda, \sigma(H(i\xi), H^{1/2}(\gamma)))} < \infty.$$
Therefore, by Lemma~\ref{lem:tensorbdd}, $I \otimes (H^\ast(i\xi) - \lambda)^{-1}$ defines a bounded inverse of 
$$I \otimes (H^\ast(i\xi) - \lambda) \colon L^2(\mathbb{R}, d\xi) \otimes H^{-1/2}_\xi(\gamma) \to L^2(\mathbb{R}, d\xi) \otimes H^{-1/2}_\xi(\gamma).$$
Hence $\lambda \notin \sigma(K^\ast, \mathcal{E}(\Gamma))$, by Lemma~\ref{lem:Kastuneq}.
	
Conversely, suppose that $\lambda \in \left[-|\Sigma_{\ast, \beta_{j^*}}|,|\Sigma_{\ast, \beta_{j^*}}| \right]\cup \Lambda^\ast$. Then, again by self-adjointness, there is a $\xi \in \R$ such that $\lambda$ is either an eigenvalue of $H^\ast(i\xi) \colon H^{-1/2}_{\xi}(\gamma) \to H^{-1/2}_{\xi}(\gamma)$, or there is a singular Weyl sequence for $H^\ast(i\xi) - \lambda$. Then we can follow the arguments of Lemma~\ref{lem:Weyl} and Theorem~\ref{specK} with minor modifications in order to construct a singular Weyl sequence for $K^\ast - \lambda \colon \mathcal{E}(\Gamma) \to \mathcal{E}(\Gamma)$, showing that $\lambda \in \sigma_{\ess}(K^\ast, \mathcal{E}(\Gamma))$.
\end{proof}

When the polyhedral cone is convex, we can obtain additional information about $\mu_+$.
\begin{theorem} \label{thm:coneconvex}
Suppose that the polyhedral cone $\hat{\Gamma}$ is convex. Then $\mu_- \leq \mu_+$ and 
$$\mu_+ = \max \sigma(H^\ast(0), H^{-1/2}(\gamma)).$$
\end{theorem}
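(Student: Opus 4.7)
The plan is to prove both claims via a Rayleigh-quotient comparison between $H^\ast(i\xi)$ and $H^\ast(0)$, exploiting the spherical convexity of $\hat\gamma$ implied by convexity of $\hat\Gamma$. Writing $M(\xi):=\max\sigma(H^\ast(i\xi),H^{-1/2}_\xi(\gamma))$, the self-adjointness furnished by Lemma~\ref{lem:krein} gives a variational formula which, by the jump formulas $(\partial_n\tilde{\mathcal{S}}(i\xi)f)_\pm=(\pm\tfrac12 I-H^\ast(i\xi))f$ and Green's identity as in Lemma~\ref{eigenvH*}, can be recast as
$$M(\xi)=\sup_{0\neq u\in H^{1/2}(\gamma)}R(u,\xi),\qquad R(u,\xi)=\frac{I_-(u,\xi)-I_+(u,\xi)}{2(I_+(u,\xi)+I_-(u,\xi))},$$
where $I_\pm(u,\xi)=\int_{\hat\gamma_\pm}|\nabla U|^2+(\tfrac14+\xi^2)|U|^2\,dS$ and $U$ is the unique solution to $(-\Delta_{S^2}+\tfrac14+\xi^2)U=0$ on $S^2\setminus\gamma$ with $U|_\gamma=u$. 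By Theorem~\ref{specKonSob}, the identity $\mu_+=\max\sigma(H^\ast(0),H^{-1/2}(\gamma))$ reduces to $M(\xi)\leq M(0)$ for every $\xi\in\R$.

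To this end the plan is to prove the pointwise estimate $R(u,\xi)\leq R(u,0)$ for every $u$ and $\xi$. Since $V_\xi=1/4+\xi^2$ is even in $\xi$, we have $R(u,\xi)=R(u,-\xi)$, so it suffices to check monotonicity on $\R_+$. Using the envelope-type identity $\partial_\xi I_\pm=2\xi\int_{\hat\gamma_\pm}|U|^2\,dS$ (justified through the Dirichlet characterization of $U_\pm$) one computes
$$\partial_\xi R(u,\xi)=\frac{2\xi\bigl(\|U_-\|^2_{L^2(\hat\gamma_-)}I_+(u,\xi)-\|U_+\|^2_{L^2(\hat\gamma)}I_-(u,\xi)\bigr)}{(I_+(u,\xi)+I_-(u,\xi))^2},$$
so $R(u,\cdot)$ is non-increasing on $\R_+$ precisely when
\begin{equation}\label{eq:planmass}
\frac{\|U_-\|^2_{L^2(\hat\gamma_-)}}{I_-(u,\xi)}\leq\frac{\|U_+\|^2_{L^2(\hat\gamma)}}{I_+(u,\xi)},
\end{equation}
i.e.\ when the ``mass fraction'' of the extension in the smaller region $\hat\gamma$ exceeds the one in the larger region $\hat\gamma_-$.

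The main obstacle is establishing \eqref{eq:planmass}. Heuristically, both ratios are bounded above by $1/V_\xi$ with equality attained only in the degenerate limit of vanishing domain; in a large region, the extension must decay exponentially away from $\gamma$, producing significant Dirichlet energy and driving the ratio strictly below $1/V_\xi$. To make this rigorous for convex $\hat\Gamma$, the plan is to exploit spherical convexity: $\hat\gamma$ is contained in a closed hemisphere, and reflection $r$ through its supporting great circle maps $\hat\gamma$ isometrically to a copy $\hat\gamma'\subset\hat\gamma_-$. The reflected function $\tilde U:=U_+\circ r^{-1}$ solves the same Schr\"odinger equation on $\hat\gamma'$ and can be extended harmonically (for $-\Delta_{S^2}+V_\xi$) to all of $\hat\gamma_-$ with prescribed boundary values on $\gamma$, producing a competitor for $U_-$ in the Dirichlet minimization defining $I_-(u,\xi)$. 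A careful tracking of boundary data on $\gamma$ versus $r(\gamma)$, combined with the maximum principle $|U_\pm|\leq\|u\|_{L^\infty(\gamma)}$ and domain monotonicity for the functional $\|\cdot\|^2/J_\pm$, should deliver \eqref{eq:planmass}; I expect this geometric step to be the most delicate part of the argument.

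Granted the above, $\mu_+=M(0)$ is immediate. For $\mu_-\leq\mu_+$ one writes $-\mu_-=\inf_{\xi}\inf_u R(u,\xi)$, takes a near-extremizing pair $(u_\ast,\xi_\ast)$, and uses the reflection $r$ again to transport $u_\ast$ to a real test function $\tilde u$ on $\gamma$ whose Rayleigh quotient for $H^\ast(0)$ is positive and of magnitude at least $-R(u_\ast,\xi_\ast)$; comparing with $M(0)=\mu_+$ then yields $\mu_-\leq\mu_+$. This symmetrization step also depends on spherical convexity through the existence of the supporting hemisphere and is closely tied to the proof of \eqref{eq:planmass}.
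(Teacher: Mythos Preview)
Your proposal has a genuine gap at the inequality you label \eqref{eq:planmass}, and the sketched reflection argument does not close it. Reflecting $U_+$ through a supporting great circle produces a function on $r(\hat\gamma)\subset\hat\gamma_-$ with boundary values $u\circ r^{-1}$ on $r(\gamma)$, not $u$ on $\gamma$; there is no evident way to extend this to a legitimate competitor for the Dirichlet problem on $\hat\gamma_-$ with the original boundary data $u$. The phrases ``careful tracking'' and ``should deliver'' do not address this mismatch, and domain monotonicity for the functional $\|\cdot\|^2/J_\pm$ is neither stated precisely nor justified. The symmetrization step for $\mu_-\le\mu_+$ inherits the same problem. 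As written, the plan is a heuristic rather than a proof.

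The paper's argument is entirely different and much shorter. Convexity of $\hat\Gamma$ is used only through the pointwise sign of the kernel: $K^\ast(t,\omega,\omega')\ge 0$ for all $t,\omega,\omega'$, whence
\[
|H^\ast(i\xi)(\omega,\omega')|=\left|\int_0^\infty t^{i\xi+3/2}K^\ast(t,\omega,\omega')\,\frac{dt}{t}\right|\le\int_0^\infty t^{3/2}K^\ast(t,\omega,\omega')\,\frac{dt}{t}=H^\ast(0)(\omega,\omega').
\]
If $H^\ast(i\xi)g=\lambda g$ with $g\in L^2_{-\alpha}(\gamma)$ (such a representative exists by Theorem~\ref{SpecHH1/2g}), this gives $|\lambda|\,|g|\le H^\ast(0)|g|$ pointwise. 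Pairing against $|g|$ in the $H^{-1/2}_0(\gamma)$ scalar product---which is legitimate because the kernel of $\tilde S(0)$ is positive---yields $|\lambda|\le\max\sigma(H^\ast(0),H^{-1/2}(\gamma))$ by the min--max principle. This single estimate delivers both $\mu_+=\max\sigma(H^\ast(0),H^{-1/2}(\gamma))$ and $\mu_-\le\mu_+$ at once, with no variational monotonicity in $\xi$ and no geometric reflection needed.
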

\begin{proof}
Suppose that $\lambda \in \Lambda^\ast_{\gamma, i\xi}$ for some $\xi \in \R$. Then, by Theorem~\ref{SpecHH1/2g}, there is an $0 \leq \alpha < 1$ such that $\lambda \in \Lambda^\alpha_{\gamma, i\xi}$. Therefore there is a function $g \in L^2_{-\alpha}(\gamma) \subset H^{-1/2}(\gamma)$ in the kernel of $H^\ast(i\xi) - \lambda$.

Since $\hat{\Gamma}$ is convex, the convolution kernel of $K^\ast$ satisfies $K^\ast(t, \omega, \omega') \geq 0$ for all $t \in \R_+$ and $\gamma, \gamma' \in \omega$, and therefore
$$
|H^\ast(i\xi)(\omega,\omega')| = \left| \int_0^\infty t^{i\xi + 3/2} K^\ast(t, \omega, \omega') \, \frac{dt}{t} \right|  \le H^\ast(0) (\omega,\omega'), \quad \omega,\omega'\in\gamma.
$$
In particular,
$|\lambda| |g| = |H^\ast (i\xi) g| \le H^\ast(0) |g|$.
Noting that $|g| \in L^2_{-\alpha}(\gamma)\subset H^{-1/2}(\gamma)$ and that the kernel of $\tilde{S}(0)$ is positive, we find that
$$
\langle H^\ast(0)|g|, |g| \rangle_{H^{-1/2}_0(\gamma)} \geq |\lambda| \| \, |g| \, \|_{H^{-1/2}_0(\gamma)}^2 .
$$
Since $H^\ast(0) \colon H^{-1/2}_0(\gamma) \to H^{-1/2}_0(\gamma)$ is self-adjoint, it follows from the min-max principle that $|\lambda| \leq \max \Lambda^\ast_{\gamma, 0}$. This proves the theorem.
\end{proof}

\subsection{Localization in the energy space}
Let $\partial \Omega$ be the boundary of a Lipschitz polyhedron, and let $K$ be the associated Neumann--Poincar\'e operator. Before proving a localization result for $K \colon H^{1/2}(\partial \Omega) \to H^{1/2}(\partial \Omega)$, we need a number of lemmas. Recall that the energy space $\mathcal{E}(\partial \Omega)$ is isomorphic to $H^{-1/2}(\partial \Omega)$.

\begin{lemma}\label{commcompact2}
Let $\varphi$ be a Lipschitz function on $\partial \Omega$. Then the commutator $[K^\ast,\varphi]=K^\ast\varphi-\varphi K^\ast$ is compact on $\mathcal{E}(\partial\Omega)$.
\end{lemma}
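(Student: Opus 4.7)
The plan is to reduce, via duality, to showing that $[K,\varphi]$ is compact on $H^{1/2}(\partial\Omega)$, and then to derive this from Cwikel's extrapolation theorem applied to the couple $(L^2(\partial\Omega), H^1(\partial\Omega))$. Since $\mathcal{E}(\partial\Omega) \simeq H^{-1/2}(\partial\Omega)$ with equivalent norms (Section~\ref{subsec:energyspace}), it suffices to show compactness of $[K^\ast,\varphi]$ on $H^{-1/2}(\partial\Omega)$. Under the $L^2(\partial\Omega)$-pairing, $K^\ast$ is dual to $K$, multiplication by a real Lipschitz $\varphi$ is self-dual, and $H^{-1/2}(\partial\Omega)$ is the dual of $H^{1/2}(\partial\Omega)$. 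Hence $[K^\ast,\varphi] = -([K,\varphi])^\ast$, and the problem is equivalent to proving compactness of $[K,\varphi]$ on $H^{1/2}(\partial\Omega)$.

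Given that $(L^2(\partial\Omega), H^1(\partial\Omega))_{1/2} = H^{1/2}(\partial\Omega)$ (Section~\ref{subsec:extrapolation}), Cwikel's theorem then reduces the task to two assertions: (i) $[K,\varphi]$ is compact on $L^2(\partial\Omega)$, and (ii) $[K,\varphi]$ is bounded on $H^1(\partial\Omega)$. Both rest on the observation that the integral kernel
$$
C(x,y) = \frac{(y-x)\cdot n_y}{4\pi|x-y|^3}\bigl(\varphi(y)-\varphi(x)\bigr)
$$
of $[K,\varphi]$ satisfies $|C(x,y)| \lesssim |x-y|^{-1}$ on $\partial\Omega$, a gain of one order of singularity over $K$ itself thanks to the Lipschitz regularity of $\varphi$. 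For (i), the kernel $|x-y|^{-1}$ is weakly singular on the two-dimensional surface $\partial\Omega$. First I would approximate $[K,\varphi]$ in operator norm by the Hilbert--Schmidt operators with truncated kernels $C(x,y)\chi_{\{|x-y|>\varepsilon\}}$, estimating the truncation error via Schur's test using $\sup_x\int_{|x-y|<\varepsilon}|x-y|^{-1}\,dS(y) \to 0$ as $\varepsilon\to 0$. This is the same type of kernel estimate that drives Lemma~\ref{commcompact}, now adapted to standard $L^2$ rather than weighted $L^2_\alpha$.

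The main obstacle is (ii), the $H^1$-boundedness on a Lipschitz polyhedron. The plan is to localize via the partition of unity $\{\varphi_i\}_{i=1}^I$ of Section~\ref{subsec:weightedL2} and, on each tangent cone $\Gamma_i$, to differentiate $[K,\varphi]f$ tangentially and distribute the derivative by the Leibniz rule. When the derivative falls on the Lipschitz factor $\varphi(y)-\varphi(x)$, the resulting operator becomes $K$ composed with an $L^\infty$ multiplier, which is bounded on $L^2(\partial\Omega)$ by Coifman--McIntosh--Meyer. When it falls on the Newton kernel, I would integrate by parts to transfer it onto $\varphi(y)f(y)$, exploiting the flat-face structure, in particular the identity $(y-x)\cdot n_y \equiv 0$ whenever $x,y$ lie on a common face, to control the boundary terms arising along the edges. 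Summing the resulting $L^2$-bounds via the partition of unity yields (ii), completing the argument. The technical execution of this $H^1$ bound is the step most likely to require care, since it is where the polyhedral geometry is genuinely exploited and where one must avoid relying on a general $H^1$-boundedness of $K$ that is not available for arbitrary Lipschitz domains.
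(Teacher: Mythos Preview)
Your proposal is correct and follows essentially the same strategy as the paper: establish compactness on $L^2(\partial\Omega)$ via the weakly singular kernel, boundedness at the $H^1$ (equivalently $H^{-1}$) endpoint, and then apply Cwikel's extrapolation. The only substantive difference is in your step (ii): the paper simply invokes Verchota's result that $K \colon H^1(\partial\Omega) \to H^1(\partial\Omega)$ is bounded on any bounded Lipschitz domain \cite{Verchota}, which, together with the fact that a Lipschitz $\varphi$ is an $H^1$-multiplier, gives $[K,\varphi]$ bounded on $H^1$ in one line. Your worry that ``a general $H^1$-boundedness of $K$ \ldots\ is not available for arbitrary Lipschitz domains'' is misplaced---this is precisely Verchota's theorem---so the hands-on differentiation/integration-by-parts argument you sketch, while workable, is unnecessary. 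Note also that the paper interpolates directly on the $(L^2,H^{-1})$ side for $K^\ast$ rather than dualizing first; the two routes are equivalent.
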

\begin{proof}
$[K^\ast,\varphi] \colon L^2(\partial\Omega) \to L^2(\partial\Omega)$ is compact, since the kernel of $[K^\ast,\varphi]$ is weakly singular. Furthermore, $[K^\ast,\varphi] \colon H^{-1}(\partial \Omega) \to H^{-1}(\partial \Omega)$ is bounded, since  $K:H^1(\partial\Omega)\to H^1(\partial\Omega)$ is bounded \cite{Verchota}. From extrapolation \cite{Cwikel} \red{(see Section \ref{subsec:extrapolation})} we therefore conclude that  $[K^\ast,\varphi]$ is compact on $H^{-1/2}(\partial\Omega) \simeq \mathcal{E}(\partial \Omega)$.
\end{proof}
To utilize our understanding of the adjoint Neumann--Poincar\'e operator $K^\ast_\Gamma \colon \mathcal{E}(\Gamma) \to \mathcal{E}(\Gamma)$ for Lipschitz polyhedral cones $\Gamma$, we also need to recall some aspects of \cite[Section~4]{Per19}.We may assume that $\Gamma$ is of the form
$$\Gamma = \{(x', \phi(x')) \, : \, x' \in \R^2\},$$
where $\phi \colon \R^2 \to \R$ is Lipschitz continuous. For a function $f$ on $\Gamma$, we define $\Pi f$ as the function on $\R^2$ for which
$$\Pi f (x') = f(x', \phi(x')), \qquad x' \in \R^2.$$
For $0 \leq s \leq 1$, the homogeneous Sobolev space $\dot{H}^s(\R^2)$ is the completion of $C_c^\infty(\R^2)$ in the norm
$$\|f\|_{\dot{H}^s(\R^2)}^2 = \int_{\R^2} |\mathcal{F} f(u) |^2 |u|^{2s} \, du,$$
where $\mathcal{F} \colon L^2(\R^2) \to L^2(\R^2)$ denotes the usual Fourier transform. For $0 \leq s < 1$ the Sobolev space $\dot{H}^s(\R^2)$ is a space of functions; it is continuously contained in $L^{2/(1-s)}(\R^2)$. For $s = 1$, $\dot{H}^1(\R^2)$ is the space of functions modulo constants such that $\nabla f \in L^2(\R^2)$. For $0 \leq s \leq 1$ we define $\dot{H}^s(\Gamma)$ by
$$\dot{H}^s(\Gamma) = \Pi^{-1} \dot{H}^s(\R^2),$$
and for $-1 \leq s < 0$ we let $\dot{H}^s(\Gamma)$ be the dual of $\dot{H}^{-s}(\Gamma)$ with respect to the $L^2(\Gamma)$-pairing. In this notation, the content of \cite[Theorem~14]{Per19} is that $\mathcal{E}(\Gamma)$ coincides with $\dot{H}^{-1/2}(\Gamma)$,
$$\mathcal{E}(\Gamma) \simeq \dot{H}^{-1/2}(\Gamma).$$

\begin{lemma} \label{lem:offdiagcpct}
Suppose that $\varphi$ and $\eta$ are two compactly supported Lipschitz functions on $\Gamma$ such that $1-\eta$ and $\varphi$ have disjoint support. 
Then
$$(1-\eta) K^\ast_\Gamma \varphi \colon \mathcal{E}(\Gamma) \to \mathcal{E}(\Gamma)$$
is compact.
\end{lemma}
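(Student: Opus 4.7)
The key observation is that $\supp(1-\eta)$ is disjoint from the compact set $\supp\varphi$, yielding a uniform positive lower bound $c := \dist(\supp(1-\eta), \supp\varphi) > 0$. Moreover, for $x \in \supp(1-\eta)$ with $|x|$ large, $|x-y| \gtrsim 1+|x|$ uniformly in $y \in \supp\varphi$. Together with the homogeneous decay $|K^\ast(x,y)| \lesssim |x-y|^{-2}$ (and similar estimates for derivatives in both variables), this will make the integral kernel $k(x,y) := (1-\eta(x)) K^\ast(x,y) \varphi(y)$ of $T := (1-\eta) K^\ast_\Gamma \varphi$ very well behaved: for each $x \in \supp(1-\eta)$, the section $g_x(y) := k(x,y)$ is a smooth function supported in the fixed compact set $\supp\varphi$, with $\|g_x\|_{C^1(\supp\varphi)} \leq C (1+|x|)^{-2}$. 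Since $\supp g_x$ lies in a fixed compact set, this entails $\|g_x\|_{\dot{H}^{1/2}(\Gamma)} \lesssim (1+|x|)^{-2}$.

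\textbf{Pointwise decay and continuity of $Tf$.} I will use the identification $\mathcal{E}(\Gamma) \simeq \dot{H}^{-1/2}(\Gamma)$ from \cite[Theorem~14]{Per19} and express $Tf$ for $f \in \mathcal{E}(\Gamma)$ via the $\dot{H}^{-1/2}$--$\dot{H}^{1/2}$ duality pairing,
\[ Tf(x) = (1-\eta(x)) \langle f, \overline{g_x} \rangle. \]
For $f$ in the dense subset of compactly supported $L^2(\Gamma)$-functions, this coincides with the original integral definition of $(1-\eta) K^\ast_\Gamma \varphi$. The first paragraph immediately yields $|Tf(x)| \lesssim \|f\|_{\mathcal{E}(\Gamma)}(1+|x|)^{-2}$. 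An analogous argument, based on the Lipschitz continuity of $x \mapsto g_x$ valued in $\dot{H}^{1/2}(\Gamma)$ (which holds on compact subsets of $\supp(1-\eta)$ thanks to the uniform lower bound on $|x-y|$), will show that $Tf$ is Lipschitz continuous on compact subsets of $\Gamma$ with Lipschitz constant controlled by $\|f\|_{\mathcal{E}(\Gamma)}$.

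\textbf{Compactness.} Given a bounded sequence $(f_n) \subset \mathcal{E}(\Gamma)$, the functions $(Tf_n)$ will be uniformly bounded and uniformly Lipschitz on every compact subset of $\Gamma$, and satisfy the uniform global decay $|Tf_n(x)| \leq C(1+|x|)^{-2}$. A diagonal Arzel\`a--Ascoli argument extracts a subsequence $(Tf_{n_k})$ converging uniformly on every compact subset of $\Gamma$, and the uniform decay then serves as an integrable majorant which, by dominated convergence, upgrades the convergence to the $L^{4/3}(\Gamma)$ norm. Finally, the Sobolev embedding $\dot{H}^{1/2}(\R^2) \hookrightarrow L^4(\R^2)$, transported to $\Gamma$ via the parametrization $\Pi$, gives by duality the continuous embedding $L^{4/3}(\Gamma) \hookrightarrow \dot{H}^{-1/2}(\Gamma) \simeq \mathcal{E}(\Gamma)$. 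Hence $(Tf_{n_k})$ is Cauchy in $\mathcal{E}(\Gamma)$, and $T$ is compact.

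\textbf{Main obstacle.} The delicate point is establishing the $\dot{H}^{1/2}(\Gamma)$ decay estimate on $g_x$ and the Lipschitz estimate for $x \mapsto g_x$ with the stated decay, uniformly in $x$. Since $K^\ast$ is smooth off the diagonal and $\supp\varphi$ is compact, the edges and vertex of $\Gamma$ play no role in these calculations, which reduce to elementary kernel estimates on the smooth faces combined with the fact that on a fixed compact set the $H^{1/2}$ and $\dot{H}^{1/2}$ norms are equivalent.
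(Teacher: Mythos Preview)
Your argument is correct, but it takes a different route from the paper's. The paper first dualizes, replacing the task by showing that $T' := \varphi K_\Gamma (1-\eta) \colon \dot{H}^{1/2}(\Gamma) \to \dot{H}^{1/2}(\Gamma)$ is compact. Since $\dot{H}^{1/2}(\Gamma) \hookrightarrow L^4(\Gamma)$, a direct H\"older estimate on the kernel shows $\|T'f\|_{L^\infty} \lesssim \|f\|_{L^4}$ and $|T'f(x)-T'f(y)| \lesssim \|f\|_{L^4}|x-y|$; since $T'f$ is supported in the fixed compact set $\supp\varphi$, this means $T'$ factors continuously through $H^1_0(U_\varphi)$ for a bounded open $U_\varphi \supset \supp\varphi$, and compactness follows immediately from the compact Sobolev embedding $H^1_0(U_\varphi) \hookrightarrow H^{1/2}$. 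Your approach instead stays on the $\dot{H}^{-1/2}$ side, estimates the kernel sections $g_x$ in $\dot{H}^{1/2}(\Gamma)$, and recovers compactness via Arzel\`a--Ascoli combined with the dual embedding $L^{4/3}(\Gamma) \hookrightarrow \dot{H}^{-1/2}(\Gamma)$. The paper's dualization is more economical because it places the compact support on the \emph{range} side, where Rellich--Kondrachov applies directly, avoiding the diagonal extraction and dominated convergence; your argument is longer but arguably more elementary in that it never invokes a compact embedding theorem, only the (non-compact) Sobolev inequality. One small slip: since your $g_x$ already carries the factor $(1-\eta(x))$, the formula should read $Tf(x) = \langle f, \overline{g_x}\rangle$, not $(1-\eta(x))\langle f, \overline{g_x}\rangle$.
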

\begin{proof}
By duality, we may equivalently prove that
\begin{equation} \label{eq:Tcpct}
T := \varphi K_\Gamma (1-\eta) \colon \dot{H}^{1/2}(\Gamma) \to \dot{H}^{1/2}(\Gamma)
\end{equation}
is compact. Applying H\"older's inequality, it is straightforward to check, for $f \in L^4(\Gamma)$, that
$$\|Tf\|_{L^\infty(\Gamma)} \lesssim \|f\|_{L^4(\Gamma)}$$
and
$$|Tf(x) - Tf(y)| \lesssim \|f\|_{L^4(\Gamma)}|x-y|, \qquad x, y \in \Gamma.$$
Since $\dot{H}^{1/2}(\Gamma) \subset L^4(\Gamma)$, we find that $T$ is bounded as an operator from $\dot{H}^{1/2}(\Gamma)$ into the zero trace Sobolev space $H^1_0(U_\varphi)$, where $U_\varphi$ is any bounded open set such that $\supp \varphi \subset U_\varphi$. It follows that the operator $T$ of \eqref{eq:Tcpct} is compact.
\end{proof}

We now provide our final theorem.
    \begin{theorem}
let $K$ be the Neumann--Poincar\'e operator of a Lipschitz polyhedron $\partial \Omega$. For each vertex of $\partial \Omega$, let $K_i = K_{\Gamma_i}$ denote the Neumann--Poincar\'e operator of the corresponding tangent polyhedral cone $\Gamma_i$, $i = 1, \ldots, I$.  

Then, for $\lambda \in \mathbb{C}$, $K^\ast - \lambda$ is Fredholm on $\mathcal{E}(\partial \Omega) \simeq H^{-1/2}(\partial \Omega)$ if and only if $K_i^\ast - \lambda$ is invertible on $\mathcal{E}(\Gamma_i)$ for every $i=1,\dots,I$. That is,
$$\sigma_{\ess}(K^\ast, \mathcal{E}(\partial\Omega)) = \bigcup_{1 \leq i \leq I} \sigma(K_i^\ast, \mathcal{E}(\Gamma_i)).$$
The spectra $\sigma(K_i^\ast, \mathcal{E}(\Gamma_i))$ have been described in Theorem~\ref{specKonSob}.
	\end{theorem}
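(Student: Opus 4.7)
The plan is to follow the template established in Theorem~\ref{locarg1}, adapting it to the energy space setting by exploiting the self-adjointness of $K^\ast$ on $\mathcal{E}(\partial\Omega)$ and of each $K_i^\ast$ on $\mathcal{E}(\Gamma_i)$ (the latter from Lemma~\ref{lem:Kastuneq}). I would set up the partition of unity $\{\varphi_i\}_{i=1}^I$ from Section~\ref{subsec:weightedL2} together with cut-off functions $\{\eta_i\}_{i=1}^I$ satisfying $\eta_i \equiv 1$ in a neighborhood of $\supp \varphi_i$ and $\eta_i \equiv 0$ on $\partial\Omega \setminus \Gamma_i$, and recall from Section~\ref{subsec:energyspace} the norm equivalence $\|f\|_{\mathcal{E}(\partial\Omega)}^2 \simeq \sum_i \|\varphi_i f\|_{\mathcal{E}(\Gamma_i)}^2$.

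For the forward direction, assume that $K_i^\ast - \lambda$ is invertible on $\mathcal{E}(\Gamma_i)$ for each $i$. Then
$$\|f\|_{\mathcal{E}(\partial\Omega)}^2 \simeq \sum_{i} \|\varphi_i f\|_{\mathcal{E}(\Gamma_i)}^2 \lesssim \sum_i \|(K_i^\ast - \lambda)\varphi_i f\|_{\mathcal{E}(\Gamma_i)}^2.$$
I would split each summand via $(K_i^\ast - \lambda)\varphi_i f = \eta_i(K_i^\ast - \lambda)\varphi_i f + (1-\eta_i)(K_i^\ast - \lambda)\varphi_i f$. On the support of $\eta_i$ the cone $\Gamma_i$ coincides with $\partial\Omega$, so $\eta_i K_i^\ast \varphi_i f = \eta_i K^\ast \varphi_i f$; writing $\eta_i K^\ast \varphi_i f = \eta_i \varphi_i K^\ast f + \eta_i[K^\ast,\varphi_i]f$, the first term is controlled by $\|(K^\ast - \lambda)f\|_{\mathcal{E}(\partial\Omega)}$ and the second is compact by Lemma~\ref{commcompact2}. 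Since $1-\eta_i$ and $\varphi_i$ have disjoint supports, $(1-\eta_i)K_i^\ast \varphi_i \colon \mathcal{E}(\Gamma_i) \to \mathcal{E}(\Gamma_i)$ is compact by Lemma~\ref{lem:offdiagcpct}. Combining these estimates gives an inequality of the form $\|f\|_{\mathcal{E}(\partial\Omega)} \lesssim \|(K^\ast - \lambda)f\|_{\mathcal{E}(\partial\Omega)} + \|Cf\|_{\mathcal{H}}$ with $C$ compact, showing that $K^\ast - \lambda$ is upper semi-Fredholm. Because $K^\ast$ is self-adjoint on $\mathcal{E}(\partial\Omega)$, the operator $K^\ast - \lambda$ is automatically Fredholm (for real $\lambda$, $\ker(K^\ast - \lambda) \simeq \mathrm{coker}(K^\ast - \lambda)$; for non-real $\lambda$ the operator is invertible).

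For the reverse direction, suppose that some $K_{i_0}^\ast - \lambda$ is not invertible on $\mathcal{E}(\Gamma_{i_0})$. By Theorem~\ref{specKonSob}, $\sigma(K_{i_0}^\ast,\mathcal{E}(\Gamma_{i_0})) = \sigma_{\ess}(K_{i_0}^\ast,\mathcal{E}(\Gamma_{i_0}))$, so self-adjointness supplies a Weyl sequence. The main technical point is to produce a Weyl sequence with support arbitrarily close to the vertex: I would adapt Lemma~\ref{lem:Weyl} using the unitary equivalence of Lemma~\ref{lem:Kastuneq}, taking $w_n(r\omega) = f_n(r) g_n(\omega)$ with $f_n$ concentrated in $[A_n, \sqrt{A_n}]$ for $A_n \to 0$ and $g_n$ a near-eigenfunction of $H^\ast(i\xi)$ in $H^{-1/2}_\xi(\gamma)$; the continuity of $\xi \mapsto H^\ast(i\xi)$ established in the previous subsection provides the small-parameter bound, and the uniform norm equivalence \eqref{eq:normcomp} ensures the $\mathcal{E}(\Gamma_{i_0})$-normalization is preserved. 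Once $(w_n)$ is supported in a small neighborhood of the vertex, we can view it via $\varphi_{i_0}$ as a sequence in $\mathcal{E}(\partial\Omega)$; using compactness of $(1-\eta_{i_0})K^\ast\varphi_{i_0}$ and Lemma~\ref{commcompact2}, one verifies that $(w_n)$ is a Weyl sequence for $K^\ast - \lambda$ on $\mathcal{E}(\partial\Omega)$, so $K^\ast - \lambda$ is not Fredholm.

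The main obstacle will be executing the adapted Weyl sequence construction in the energy norm: one must verify that the scaling argument of Lemma~\ref{lem:Weyl}, originally phrased in $L^2_\alpha(\gamma)$, carries over to $H^{-1/2}_\xi(\gamma)$ uniformly in $\xi$ in a neighborhood of a given $\xi_0$, and that the required approximate eigenfunctions $g_n$ in $H^{-1/2}_{\xi_0}(\gamma)$ exist by self-adjointness of $H^\ast(i\xi_0)$ on this space. Once the uniformity of norm comparisons and of $\xi \mapsto H^\ast(i\xi)$ is in hand, the remainder of the argument is essentially bookkeeping along the lines of Theorems~\ref{specK} and \ref{locarg1}.
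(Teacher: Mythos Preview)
Your proposal is correct and follows essentially the same approach as the paper's proof: the same partition of unity, the same splitting via $\eta_i$ and $1-\eta_i$, the same appeals to Lemmas~\ref{commcompact2} and \ref{lem:offdiagcpct}, and self-adjointness to upgrade upper semi-Fredholm to Fredholm; in the reverse direction, both produce a compactly supported Weyl sequence on $\mathcal{E}(\Gamma_{i_0})$ and transfer it to $\mathcal{E}(\partial\Omega)$ via the same compactness ingredients. The paper is terse precisely where you flagged the obstacle---it simply points to the proofs of Theorems~\ref{specK} and \ref{specKonSob} for the energy-space Weyl sequence with small support---so your explicit plan to rerun the Lemma~\ref{lem:Weyl} construction using Lemma~\ref{lem:Kastuneq} and the norm comparison \eqref{eq:normcomp} is exactly what is implicitly required there.
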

\begin{remark} \rm
Since $K^\ast \colon \mathcal{E}(\partial \Omega) \to \mathcal{E}(\partial \Omega)$ is self-adjoint and $\mathcal{E}(\partial \Omega) \simeq H^{-1/2}(\partial \Omega)$, we obtain as a corollary that
$$\sigma(K, H^{1/2}(\partial \Omega)) = \bigcup_{1 \leq i \leq I} \sigma(K_i^\ast, \mathcal{E}(\Gamma_i)) \cup \{\lambda_k\}_k ,$$
where $\{\lambda_k\}_k$ is a sequence of real isolated eigenvalues.
\end{remark}
	\begin{proof}
	We follow the proof of Theorem~\ref{locarg1}, retaining its notation.
	
Suppose that $\lambda I-K_i^\ast$ is invertible on $\mathcal{E}(\Gamma_i)$ for every $i=1,\dots,I.$ Then, for $f\in \mathcal{E}(\partial\Omega)$,
	\begin{align*}
	\|f\|_{\mathcal{E}(\partial\Omega)}&\simeq\sum_{1\le i\le I}\|\varphi_i f\|_{\mathcal{E}(\Gamma_i)}\lesssim \sum_{1\le i\le I}\|(\lambda I- K_i^\ast)\varphi_i f\|_{\mathcal{E}(\Gamma_i)}\\
	&\lesssim \sum_{1\le i\le I}\|{\eta_i}(\lambda I- K^\ast_i)\varphi_i f\|_{\mathcal{E}(\Gamma_i)}+  \sum_{1\le i\le I}\|{(1-\eta_i)}(\lambda I- K_i^\ast)\varphi_i f\|_{\mathcal{E}(\Gamma_i)}\\
	&\lesssim  \sum_{1\le i\le I}\|(\lambda I- K^\ast)\varphi_i f\|_{\mathcal{E}(\partial\Omega)}+ \sum_{1\le i\le I}\|{(1-\eta_i)} K_i^\ast \varphi_i f\|_{\mathcal{E}(\Gamma_i)},
	\end{align*}
	where we have used that $1 - \eta_i$ and $\varphi_i$ have disjoint support, and that
	$$\|{\eta_i}(\lambda I- K^\ast_i)\varphi_i f\|_{\mathcal{E}(\Gamma_i)} \simeq \|{\eta_i}(\lambda I- K^\ast)\varphi_i f\|_{H^{-1/2}(\partial \Omega)} \lesssim \|(\lambda I- K^\ast)\varphi_i f\|_{\mathcal{E}(\partial\Omega)}.$$
	By Lemmas~\ref{commcompact2} and \ref{lem:offdiagcpct}, there is thus a Hilbert space $\mathcal{H}$ and a compact operator $C \colon \mathcal{E}(\partial\Omega) \to \mathcal{H}$ such that
	\begin{align*}
	\|f\|_{\mathcal{E}(\partial\Omega)}&\lesssim \sum_{1\le i\le I}\|\varphi_i(\lambda I- K^\ast) f\|_{\mathcal{E}(\partial\Omega)}+ \|C f\|_{\mathcal{H}}\\
	&\lesssim \|(\lambda I- K^\ast) f\|_{\mathcal{E}(\partial\Omega)} + \|C f\|_{\mathcal{H}}.
	\end{align*}
	This shows that $\lambda I-K^\ast$ is Fredholm on $\mathcal{E}(\partial \Omega)$, since $K^\ast \colon \mathcal{E}(\partial \Omega) \to \mathcal{E}(\partial \Omega)$ is self-adjoint.
	   
	 Conversely, suppose that $\lambda I-K_{i_0}^\ast \colon \mathcal{E}(\Gamma_{i_0}) \to \mathcal{E}(\Gamma_{i_0})$ fails to be invertible for some $i_0\in \{1,\dots,I\}$. Then, by the proofs of Theorems~\ref{specK} and \ref{specKonSob}, there is a singular Weyl sequence $(w_n)$ for $\lambda I-K_{i_0}^\ast \colon \mathcal{E}(\Gamma_{i_0}) \to \mathcal{E}(\Gamma_{i_0})$, supported in a sufficiently small neighborhood of the vertex of $\Gamma_{i_0}$. 	
	We interpret $(w_n)$ as a sequence in $\mathcal{E}(\partial \Omega)$, tending to $0$ weakly and satisfying that
	$\|w_n\|_{\mathcal{E}(\partial \Omega)} = 1$ for all $n$, cf. Section~\ref{subsec:energyspace}.
	
	Choosing the support of $w_n$ appropriately, we have the following equation in $\mathcal{E}(\partial \Omega)$,
	$$(\lambda I - K^\ast)w_n = \eta_{i_0} (\lambda I - K^\ast) \varphi_{i_0} w_n - (1-\eta_{i_0}) K^\ast \varphi_{i_0} w_n.$$
	It is easy to verify that the operator $(1-\eta_{i_0}) K^\ast \varphi_{i_0} \colon \mathcal{E}(\partial \Omega) \to \mathcal{E}(\partial \Omega)$ is compact, either by extrapolation or by arguing as in the proof of Lemma~\ref{lem:offdiagcpct}. Therefore $(1-\eta_{i_0}) K^\ast \varphi_{i_0} w_n \to 0$ in $\mathcal{E}(\partial \Omega)$ as $n \to \infty$, since $w_n \to 0$ weakly. Next we understand $\eta_{i_0} (\lambda I - K^\ast) \varphi_{i_0} w_n$ as an element of $\mathcal{E}(\Gamma_{i_0})$ satisfying
	$$\eta_{i_0} (\lambda I - K^\ast) \varphi_{i_0} w_n = (\lambda I- K_{i_0}^\ast) w_n + (1 - \eta_{i_0}) K_{i_0}^\ast\varphi_{i_0} w_n.$$
	Here $(\lambda I- K_{i_0}^\ast) w_n \to 0$ by the choice of $(w_n)$ as a Weyl sequence, while $(1 - \eta_{i_0})K_{i_0}^\ast \varphi_{i_0} w_n \to 0$ in $\mathcal{E}(\Gamma_{i_0})$ by Lemma~\ref{lem:offdiagcpct}. 
	
	We have shown that $(\lambda I - K^\ast)w_n \to 0$ in $\mathcal{E}(\partial \Omega)$, demonstrating that $\lambda \in \sigma_{\ess}(K^\ast, \mathcal{E}(\partial\Omega))$. 
\end{proof}	
 
{\bf Declarations}

\thanks{
	 The authors were supported by grant EP/S029486/1 of the Engineering and Physical Sciences Research Council (EPSRC), and the second author 
	was partially supported by the ERCIM `Alain Bensoussan' Fellowship Programme.}


	
	\bibliographystyle{amsplain-nodash} 

\end{document}